\newcommand*{\rom}[1]{\expandafter\@slowromancap\romannumeral #1@}
\newcommand{\D}{\mathcal{D}}
\newcommand{\AXC}{\AxiomC}
\newcommand{\UIC}{\UnaryInfC}
\newcommand{\BIC}{\BinaryInfC}
\newcommand{\DP}{\DisplayProof}
\newcommand{\RL}{\RightLabel}
\newcommand{\LL}{\LeftLabel}
\newtheorem{The}{Theorem}[section]
\newtheorem{Lem}[The]{Lemma}
\newtheorem{Cor}[The]{Corollary}
\newtheorem{Def}[The]{Definition}
\newtheorem{Examp}[The]{Example}
\let\oldproofname=\proofname
\renewcommand{\proofname}{\textit{\rm\bf\oldproofname}}
\title{\bf\Large Rooted Hypersequent Calculus for  Modal  Logic S5
\thanks
{{\it Key Words}: Cut-free, Subformula property, Sequent calculus, Gentzen-style, Modal logic S5.}
\thanks {2010{ \it Mathematics Subject Classification}.  Primary, 03F05, 03B45.}}
\author{{\bf Mojtaba Aghaei$^{{\rm a}}$   and~ {\bf Hamzeh Mohammadi}$^{{\rm a}}$}\thanks{Corresponding author.} \\
{\small{ $^{{\rm a}}$Department of Mathematical Sciences,  Isfahan University of Technology}}\vspace{-1mm}\\
{\small{     Isfahan,   Iran}}\\
%{\small{ $^{{\rm b}}$School of Mathematics,   Institute for Research in Fundamental Sciences
%(IPM)}}\vspace{-1mm}\\ {\small{ P.O.Box :   19395-5746, Tehran,  Iran}}\vspace{-1mm}\\
{\small{aghaei@cc.iut.ac.ir}}\vspace{-1mm}\\
{\small{hamzeh.mohammadi@math.iut.ac.ir}}\vspace{-1mm}
}
\date\today
\begin{document}
  \maketitle

\begin{abstract}
We  present  a rooted hypersequent  calculus   for  modal propositional logic S5. We show that all rules of this calculus are invertible and that the  rules of weakening, contraction, and  cut  are admissible. Soundness and completeness are established as well.
\end{abstract}
\section{\bf Introduction}
The propositional modal logic {\sf S5}   is one of the peculiar  modal logics in several respects. Most notably from the proof-theoretical point of view, S5 has so far resisted all efforts to provide it with a acceptable cut-free sequent calculus. Whereas the framework of sequent calculi has proven quite successful in providing analytic calculi for a
number of normal modal logics such as {\sf K}, {\sf KT} or {\sf S4} \cite{wansing1994sequent}. For some formats of rules it can even be shown that no such calculus can exist \cite{Lellmann-Pattinson}. Perhaps, the easiest way of demonstrating this resistance is Euclideanness axiom: $ {\sf (5)} \,\, \Diamond A\rightarrow \Box\Diamond A $. Sequent calculus systems for S5   have been widely studied for a long time. Several  authors have introduced
many sequent calculi   for S5,  including Ohnishi and Matsumoto \cite{ohnishi1959gentzen}, Mints \cite{mints1997indexed}, Sato \cite{sato1980cut}, Fitting \cite{Fitting}, Wansing \cite{wansing1994sequent} and Bra\"uner \cite{brauner2000cut}. The efforts to develop   sequent calculus to accommodate cut-free systems
for  S5   leading to introduce a variety of new sequent framework. Notably, labelled sequent calculus (see e.g. \cite{negri2005proof}), double sequent calculus (see e.g. \cite{indrzejczak1998cut}), display calculus
(see e.g. \cite{ belnap1982display, wansing1999predicate}),  deep inference system (see e.g.  \cite{stouppa2007deep}), nested sequent 
(see \cite{brunnler2009deep, poggiolesi-Gentzen Calculi}), 
hypersequent calculus,  which was introduced independently in \cite{Avron hyper, mints1974, Pottinger}  and finally, grafted hypersequents (\cite{Kuznets Lellmann}), which  combines the formalism of nested sequents with that of hypersequents. Hypersequent calculus provided numerous cut-free formulations for the logic S5, including
Pottinger \cite{Pottinger}, Avron \cite{Avron hyper}, Restall \cite{restall2005proofnets}, Poggiolesi \cite{poggiolesi2008cut}, Lahav \cite{Lahav}, Kurokawa \cite{kurokawa2013hypersequent},  Bednarska et al \cite{Bednarska Indrzejczak}, and Lellmann  \cite{ Lellmann}. 

The aim of this paper is to introduce a new  sequent-style  calculus  for S5  by suggesting a
framework of rooted hypersequents. A rooted hypersequent is of the form $ \Gamma\Rightarrow\Delta\,||\, P_1\Rightarrow Q_1\,|\cdots|\, P_n\Rightarrow Q_n $, where $ \Gamma $ and $ \Delta $ are multisets of arbitrary formulas and  $ P_i $ and $ Q_i $ are  multisets of atomic formulas. Precisely,  a rooted hypersequent
is given by a sequent $ \Gamma\Rightarrow\Delta $, called its root, together with a hypersequent $ \cal H $, called its crown, where all formulas in the components of crown are atomic formulas. The sequents in the crown  work as storage for  atomic formulas that    they might be  used  to get axioms. This sequent  is inspired by the  grafted hypersequent in \cite{Kuznets Lellmann}. A difference is that in the grafted hypersequent, all formulas in the crown can be arbitrary formula. Thus, the notion of our calculus is very close to the notion of grafted hypersequent.
The main idea for constructing rooted hypersequent is to take an ordinary sequent $ \Gamma\Rightarrow\Delta $ as a
root  and add  sequences of multisets of atomic formulas to it.

 Our calculus   has the subformula property,   and we show that all rules of this calculus are invertible and that the  rules of weakening, contraction, and  cut are admissible. It is worth pointing out that in order to prove admissibility of cut rule, we make use of a normal form, called Quasi Normal Form, which   is based on using modal  and negation of modal  formulas as literals. Soundness and completeness are established as well.

We proceed as follows. In the next section  we recall the modal logic {\sf S5}. In Section \ref{G3S5}, we present rooted hypersequent  calculus  $ {\cal R}_{\sf S5}$. In Section \ref{section Soundness}, we prove  soundness of the system with respect to  Kripke models. In  Section \ref{Structural properties}, we prove the admissibility of  weakening and contraction rules,   and some other properties of $ {\cal R}_{\sf S5}$. In Section \ref{sec cut}, we prove admissibility of cut rule, and completeness of the system. Finally, we conclude the paper in Section \ref{conclution}.
\section{\bf  Modal logic S5} \label{sec 2}
In this section, we recall  the  axiomatic formulation of the   modal logic  S5.

The language of  modal logic S5 is obtained by adding to the language of
propositional logic the two modal operators $ \Box $ and $ \Diamond $. Atomic formulas are denoted by 
$ p,q,r, $ and so on.
 Formulas, denoted by 
$ A,B,C,\ldots $,
 are defined by the following grammar:
\[ A:=\bot\,|\top\,|p\,|\neg A|\,A\wedge A|\,A\vee A|\,A\rightarrow A|\,\Diamond A|\,\Box A,\]
where $ \bot $ is a  constant for falsity, and $ \top  $ is a  constant for truth.

Modal logic S5 has the following axiom schemes:
\begin{align*}
& \text{All propositional tautologies,}\\
& \text{(Dual)} \quad \Box A\leftrightarrow \neg \Diamond \neg A,\\
&\text{(K)}\quad\Box(A\rightarrow B)\rightarrow(\Box A\rightarrow \Box B),\\
&\text{(T)}\quad  \Box A\rightarrow A,\\
&\text{(5)} \quad \Diamond A\rightarrow \Box\Diamond A.
\end{align*}
Equivalently, instead of (5) we can use:
\begin{align*}
& \text{(4)}\quad \Box A\rightarrow \Box\Box A,\\
& \text{(B)} \quad  A\rightarrow \Box\Diamond A.
\end{align*}
The proof rules are Modus Ponens and  Necessitation:
\begin{center}
	\AxiomC{$ A $}
	\AxiomC{$ A\rightarrow B $}
	\RightLabel{ MP,}
	\BinaryInfC{$ B $}
	\DisplayProof
	$\quad$
	\AxiomC{$A$}
	\RightLabel{N.}
	\UnaryInfC{$\Box A$}
	\DisplayProof
\end{center}
Rule Necessitation  can be applied only to theorems (i.e. to formulas derivable from no premise),  for a detailed exposition see \cite{Chellas, Blackburn}.
 If 
$ A $ is derivable in S5 from   assumption
$ \Gamma $, we   write $ \Gamma\vdash_{\text{S5}} A $. 
\section{ Rooted Hypersequent $ \cal{R}_{\sf S5} $}\label{G3S5}
Our calculus is based on finite multisets, i.e. on sets counting multiplicities of elements. We use certain categories of letters, possibly with subscripts or
primed, as metavariables for certain syntactical categories (locally different
conventions may be introduced):
\begin{tasks}(2)
		\task[$ \bullet $]
	$ p$ and $q$ for atomic formulas, 
		\task[$ \bullet $]
	$P$ and $Q$ for multisets of atomic formulas,
	 \task[$ \bullet $]
	 	$ M$ and $N$ for multisets of modal formulas,
	\task[$ \bullet $]
	$ \Gamma$ and $\Delta$ for multisets of arbitrary formulas.
\end{tasks}
In addition, we  use the following notations.
\begin{itemize}
\item 
The union of multisets  $ \Gamma $ and $ \Delta $  is
indicated simply by $ \Gamma,\Delta $. The union of a multiset $ \Gamma $
with a singleton multiset $ \{A\} $ is written  $ \Gamma, A $.
\item
 We use $\neg\Gamma$ for multiset of formulas $\neg A$ such that $A\in \Gamma$.
%$ \neg\Gamma=\{\neg A: A\in \Gamma\} $.
\end{itemize}
\begin{Def}
	A sequent is a pair of multisets $ \Gamma $ and $ \Delta $, written as $ \Gamma\Rightarrow\Delta $.  A hypersequent is a multiset of sequents,
	written $ \Gamma_1\Rightarrow\Delta_1\,|\,\cdots,| \Gamma_n\Rightarrow\Delta_n $, where each $ \Gamma_i\Rightarrow\Delta_i $ is called a component. A rooted hypersequent
	is given by a sequent $ \Gamma\Rightarrow\Delta $, called its root, together with a hypersequent $ \cal H $, called its crown, where all formulas in the components of crown are atomic formulas,
	and is written as $ \Gamma\Rightarrow\Delta\,||\, {\cal H} $. If the crown is the empty hypersequent, the double-line separator can be omitted: a rooted hypersequent $ \Gamma\Rightarrow\Delta $ is understood as $ \Gamma\Rightarrow\Delta\,||\, \emptyset $. Formulas occurring on the
	left-hand side of the sequent arrow in the root or a component of the crown are called antecedent formulas; those occurring on the right-hand side succedent formulas.
\end{Def}
Therefore, the notion of a rooted hypersequent 
\[\Gamma\Rightarrow\Delta\, ||\, P_1 \Rightarrow Q_1\,|\, P_2\Rightarrow Q_2\,|\, \ldots \,|\,P_n\Rightarrow Q_n,\]
 can be seen as a restriction of the notion of grafted hypersequent as in \cite{Kuznets Lellmann}.
\begin{Def}
Let $\Gamma\Rightarrow\Delta\, ||\, P_1 \Rightarrow Q_1\,|\, P_2\Rightarrow Q_2\,|\, \ldots \,|\,P_n\Rightarrow Q_n $ be a rooted hypersequent. Its formula interpretation is the
formula 
$\bigwedge\Gamma\rightarrow \bigvee\Delta\vee\bigvee\limits_{i=1}^n\Box(\bigwedge P_i\rightarrow \bigvee Q_i)$.
\end{Def}

 The axioms and rules of $ \cal{R}_{\sf S5} $ are given in the following:
 \vspace{0.5cm}	\\
\textbf{Initial sequents:}
\begin{center}
                \begin{tabular}{cccccc}
				\AXC{}
				\RL{\sf Ax}
				\UIC{$p,\Gamma\Rightarrow \Delta,p\,||\, {\cal H} $}
				\DP
				&
				&
				\AXC{}
				\RL{\sf L$\bot$}
				\UIC{$\bot,\Gamma\Rightarrow \Delta\,||\, {\cal H} $}
				\DP
				& 
				&
				\AXC{}
				\RL{\sf R$\top$}
				\UIC{$\Gamma\Rightarrow \Delta, \top\,||\, {\cal H} $}
				\DP
	\end{tabular}
\end{center}
\vspace{0.4cm}
\textbf{Propositional Rules:}
\begin{center}
\begin{tabular}{ccc}
				\AXC{$ \Gamma\Rightarrow  \Delta,A\,||\, {\cal H} $}
				\RL{\sf L$\neg $}
				\UIC{$ \neg A,\Gamma\Rightarrow  \Delta\,||\, {\cal H} $}
				\DP
				&
				&
				\AXC{$ A,\Gamma\Rightarrow  \Delta\,||\,{\cal H} $}
				\RL{\sf R$\neg $}
				\UIC{$ \Gamma\Rightarrow  \Delta,\neg A\,||\,{\cal H}$}
				\DP
				\\ [0.7cm]
				\AXC{$ A,\Gamma\Rightarrow  \Delta \,||\,{\cal H} $}
				\AXC{$  B,\Gamma\Rightarrow  \Delta \,||\,{\cal H}$}
				\RL{\sf L$\vee $}
				\BIC{$ A\vee B,\Gamma\Rightarrow  \Delta \,||\,{\cal H}$}
				\DP
				&
				&
				\AXC{$ \Gamma\Rightarrow  \Delta, A,B\,||\,{\cal H} $}
				\RL{\sf R$\vee $}
				\UIC{$\Gamma\Rightarrow  \Delta,A\vee B \,||\,{\cal H}$}
				\DP
				\\[0.7cm]
				\AXC{$ A,B,\Gamma\Rightarrow  \Delta \,||\,{\cal H}$}
				\RL{\sf L$\wedge $}
				\UIC{$A\wedge B,\Gamma\Rightarrow  \Delta \,||\,{\cal H}$}
				\DP
				&
				&
				\AXC{$ \Gamma\Rightarrow  \Delta,A \,||\,{\cal H}$}
				\AXC{$ \Gamma\Rightarrow  \Delta,B\,||\,{\cal H} $}
				\RL{\sf R$\wedge $}
				\BIC{$ \Gamma\Rightarrow  \Delta,A\wedge B\,||\,{\cal H} $}
				\DP
				\\[0.7cm]
				\AXC{$ \Gamma\Rightarrow  \Delta, A \,||\,{\cal H}$}
				\AXC{$ B,\Gamma\Rightarrow  \Delta \,||\,{\cal H}$}
				\RL{\sf L$\rightarrow $}
				\BIC{$ A\rightarrow B,\Gamma\Rightarrow  \Delta\,||\,{\cal H} $}
				\DP
				&
				&
				\AXC{$ A,\Gamma\Rightarrow  \Delta,B\,||\,{\cal H} $}
				\RL{\sf R$\rightarrow $}
				\UIC{$ \Gamma\Rightarrow  \Delta,A\rightarrow B\,||\,{\cal H} $}
				\DP
\end{tabular}
\end{center}
\vspace{0.3cm}
\textbf{Modal Rules:}
\begin{center}
\begin{tabular}{ccc}
				\AXC{$  A,M\Rightarrow N\,||\,{\cal H}\,|\,P\Rightarrow Q $}
				\RL{\sf L$\Diamond $}
				\UIC{$\Diamond A,M,P\Rightarrow Q,N\,||\,{\cal H} $}
				\DP
				&
				&
				\AXC{$\Gamma\Rightarrow  \Delta,\Diamond A,A \,||\,{\cal H}$}
				\RL{\sf R$\Diamond $}
				\UIC{$ \Gamma\Rightarrow  \Delta,\Diamond A\,||\,{\cal H}$}
				\DP
				\\[0.7cm]
				\AXC{$ A,\Box A,\Gamma\Rightarrow \Delta \,||\,{\cal H}$}
				\RL{\sf L$\Box $}
				\UIC{$\Box A,\Gamma\Rightarrow \Delta\,||\,{\cal H} $}
				\DP
				&
				&
				\AXC{$ M\Rightarrow N,A\,||\, {\cal H}\,|\,P\Rightarrow Q$}
				\RL{\sf R$\Box $}
				\UIC{$M,P\Rightarrow Q,N,\Box A \,||\,{\cal H}$}
				\DP
\end{tabular}
\end{center}
\vspace{0.3cm}
\textbf{Structural Rule:}
\begin{center}
\begin{tabular}{c}
					\AXC{$M,P_i\Rightarrow Q_i,N\,||\,{\cal H}\,| P\Rightarrow Q\,|{\cal G} $}
					\RL{\sf Exch}
					\UIC{$M,P\Rightarrow Q,N\,||\,{\cal H}\,| P_i\Rightarrow Q_i\,|{\cal G} $}
					\DP  	
			\end{tabular}
\end{center}
Let us make some remarks on the {\sf L$ \Diamond $},  {\sf R$ \Box $} and {\sf Exch} (abbreviates for exchange).
All formulas in the conclusions of these rules  are atomic or modal formulas. In a backward proof search by applying these rules, atomic formulas in the root  of the conclusions   move to the crown of  the premises as a new sequent. Suppose, $P$ in the antecedent  and $Q$ in the succedent of the root sequent of conclusion move to the crown of premise,
the formulas in $P$ and $Q$ are saved in the crowns, until  applications of the rule {\sf Exch} in a derivation. In other words,  The sequents in the crown  work as storage for  atomic formulas that    they might be  used  to get axioms. By applying the rule {\sf Exch}, the  multisets   $ P_i$ and $ Q_i $ come out from the crown  while $ P $ and $ Q $ move to the crown.

\begin{Examp}\label{exa}{\rm
The following sequents are derivable in $ \cal{R}_{\sf S5} $.
\begin{enumerate}
%\item \label{exa1}
%$ \Rightarrow (\Diamond\Box(\neg p\vee q)\wedge p)\rightarrow q $
%\item \label{exa2}
%$ \Rightarrow p \rightarrow(q\vee \Box\Diamond(p\wedge \neg q)) $
%\item \label{exa3}
%$ \Rightarrow \Diamond (p\rightarrow \Box p) $
\item \label{exa4}
$\Rightarrow (r\wedge p)\rightarrow (q\rightarrow \Box(\Diamond (p\wedge q)\wedge \Diamond r)) $
\item \label{exa5}
$ \Rightarrow\Box(\Box\neg p\vee p)\rightarrow \Box(\neg p\vee \Box p) $
\end{enumerate}}
\end{Examp}
\begin{proof}
	$ \, $\\
\begin{prooftree}
	1.
	\AXC{}
	\RL{Ax}
	\UIC{$ p,q,r\Rightarrow \Diamond(p\wedge q),p $}
	\AXC{}
	\RL{Ax}
	\UIC{$ r,p,q\Rightarrow \Diamond(p\wedge q),q $}
	\RL{\sf R$\wedge $}
	\BIC{$ r,p,q\Rightarrow \Diamond(p\wedge q), p\wedge q $}
	\RL{\sf R$\Diamond $}
	\UIC{$ r,p,q\Rightarrow \Diamond(p\wedge q) $}
	\RL{\sf Exch}
	\UIC{$ \Rightarrow \Diamond(p\wedge q)\,||\,r,p,q\Rightarrow $}
	\AXC{}
	\RL{Ax}
	\UIC{$ r,p,q\Rightarrow \Diamond r,r $}
	\RL{\sf R$\Diamond $}
	\UIC{$ r,p,q\Rightarrow \Diamond r $}
	\RL{\sf Exch}
	\UIC{$ \Rightarrow \Diamond r \,||\,r,p,q\Rightarrow$}
	\RL{\sf R$\wedge $}
	\BIC{$ \Rightarrow\Diamond (p\wedge q)\wedge \Diamond r\,||\,r,p,q\Rightarrow $}
	\RL{\sf R$\Box $}
	\UIC{$ r,p,q\Rightarrow\Box(\Diamond (p\wedge q)\wedge \Diamond r) $}
	\RL{\sf L$\wedge $}
	\UIC{$ r\wedge p,q\Rightarrow  \Box(\Diamond (p\wedge q)\wedge \Diamond r) $}
	\RL{\sf R$\rightarrow $}
	\UIC{$ r\wedge p\Rightarrow q\rightarrow \Box(\Diamond (p\wedge q)\wedge \Diamond r) $}
	\RL{\sf R$\rightarrow $}
	\UIC{$\Rightarrow (r\wedge p)\rightarrow (q\rightarrow \Box(\Diamond (p\wedge q)\wedge \Diamond r)) $}
\end{prooftree}
\begin{prooftree}
	2.
	\AXC{}
	\RL{\sf Ax}
	\UIC{$\Box\neg p,\Box(\Box\neg p\vee p),p\Rightarrow p  \,||\,\Rightarrow p $}
	\RL{\sf L$\neg $}
	\UIC{$\neg p,\Box\neg p,\Box(\Box\neg p\vee p),p\Rightarrow  \,||\,\Rightarrow p $}
	\RL{\sf L$\Box $}
	\UIC{$ \Box\neg p,\Box(\Box\neg p\vee p),p\Rightarrow  \,||\,\Rightarrow p $}
	\RL{\sf Exch}
	\UIC{$ \Box\neg p,\Box(\Box\neg p\vee p)\Rightarrow p\,||\,p\Rightarrow $}
	\AXC{}
	\RL{\sf Ax}
	\UIC{$ p,\Box(\Box\neg p\vee p)\Rightarrow p\,||\,p\Rightarrow $}
	\RL{\sf L$\vee $}
	\BIC{$ \Box\neg p\vee p,\Box(\Box\neg p\vee p)\Rightarrow p \,||\,p\Rightarrow$}
	\RL{{\sf L$\Box$},}
	\UIC{$ \Box(\Box\neg p\vee p)\Rightarrow p\,||\,p\Rightarrow $}
	\RL{\sf R$\Box $}
	\UIC{$ \Box(\Box\neg p\vee p),p\Rightarrow  \Box p $}
	\RL{\sf R$\neg $}
	\UIC{$ \Box(\Box\neg p\vee p)\Rightarrow \neg p,\Box p $}
	\RL{\sf R$\vee $}
	\UIC{$ \Box(\Box\neg p\vee p)\Rightarrow \neg p\vee\Box p $}
	\RL{\sf R$\Box $}
	\UIC{$ \Box(\Box\neg p\vee p)\Rightarrow \Box(\neg p\vee\Box p) $}
	\RL{\sf R$\rightarrow $}
	\UIC{$\Rightarrow \Box(\Box\neg p\vee p)\rightarrow \Box(\neg p\vee\Box p) $}
\end{prooftree}
\end{proof}
\section{ Soundness}\label{section Soundness}
In this section we prove  soundness of the rules with respect to Kripke models.

A Kripke model $\mathcal{M}$ for S5 is a triple
$ \mathcal{M}=(W, R, V) $
where $ W $ is a set of states, $ R $ is an equivalence relation on $ W $ and
$ V: \varPhi\rightarrow \mathcal{P} (W) $ is a  valuation function, where
$ \varPhi $
is the set of propositional variables.
Suppose that $ w\in W $. We inductively
define the notion of a formula $ A $ being satisfied in $\mathcal{M}$ at state $ w $ as follows:
\begin{itemize}
	\item
	$\mathcal{M},w \vDash p \quad \text{iff} \quad  w\in V(p),\, \text{where}\,\,
	p\in \varPhi $,
	\item
	$\mathcal{M},w \vDash \neg A \quad \text{iff} \quad \mathcal{M},w \nvDash  A,$
	\item
	$\mathcal{M},w \vDash A\vee B  \quad \text{iff} \quad \mathcal{M},w \vDash  A \,\,\text{or}\,\, \mathcal{M},w \vDash  B,$
	\item
	$\mathcal{M},w \Rightarrow A\wedge B  \quad \text{iff} \quad \mathcal{M},w \vDash  A \,\,\text{and}\,\, \mathcal{M},w \vDash  B,$
	\item
	$\mathcal{M},w \vDash A\rightarrow B  \quad \text{iff} \quad \mathcal{M},w \nvDash  A \,\text{or}\, \mathcal{M},w \vDash  B,$
	\item
	$\mathcal{M},w \vDash \Diamond A \quad \text{iff} \quad  \,\,\mathcal{M},v \Rightarrow  A\,\,\text{for some}\,\, v\in W \,\,\text{such that}\,\, R(w,v), $
	\item
	$\mathcal{M},w \vDash \Box A \quad \text{iff} \quad \mathcal{M},v \Rightarrow  A \,\,\text{for all}\,\, v\in W \,\text{such that}\, R(w,v). $
\end{itemize}
We extend semantical notions to sequents in the following way:
\begin{itemize}
	\item 
	$ \mathcal{M},w \vDash \Gamma\Rightarrow \Delta\,||\,P_1\Rightarrow Q_1\,|\cdots P_n\Rightarrow Q_n $ iff 
	$ \mathcal{M},w \vDash \bigwedge\Gamma\rightarrow \bigvee\Delta\vee\bigvee_{i=1}^n\Box(\bigwedge P_i\rightarrow \bigvee Q_i)$.
	\item 
	$ \mathcal{M} \vDash  \Gamma\Rightarrow \Delta \,||\, {\cal H} $ iff $ \mathcal{M},w \Rightarrow    \Gamma\Rightarrow \Delta \,||\, {\cal H} $,  for all $  w $ in the domain of $ \mathcal{M} $.
	\item 
	$ \vDash  \Gamma\Rightarrow \Delta\,||\, {\cal H} $  iff
	$ \mathcal{M} \vDash  \Gamma\Rightarrow \Delta \,||\, {\cal H} $, for all S5  models $ \mathcal{M} $.
	\item 	
	The sequent $  \Gamma\Rightarrow \Delta\,||\, {\cal H} $ is called S5-valid if $ \vDash  \Gamma\Rightarrow \Delta \,||\, {\cal H}$.
\end{itemize}
\begin{Lem}\label{2.1}
	Let
	$\mathcal{M}=(W,R,V)$
	be a {\rm Kripke} model for
	$ {\sf S5} $.
	\begin{itemize}
		\item[{\rm (1)}]
		$\mathcal{M},w \Rightarrow \Box A \quad \text{iff} \quad \mathcal{M},w' \Rightarrow \Box A$, for all $ w'\in W $, where $ wRw' $.
		\item[{\rm (2)}]
		$\mathcal{M},w \Rightarrow \Diamond A \quad \text{iff} \quad \mathcal{M},w' \Rightarrow \Diamond A$, for all $ w'\in W $, where $ wRw' $.
		\item[{\rm (3)}]
		If
		$\mathcal{M},w \Rightarrow  A$, then  $\mathcal{M},w' \Rightarrow \Diamond A$, for all $ w'\in W $, where $ wRw' $.
	\end{itemize}
\end{Lem}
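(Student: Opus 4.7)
The plan is to unfold the semantic definitions and exploit the fact that $R$ is an equivalence relation; essentially everything reduces to symmetry and transitivity, with reflexivity not even being needed.

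For part (1), I would prove both directions directly. Forward: assume $\mathcal{M},w\vDash\Box A$ and $wRw'$, and take an arbitrary $v$ with $w'Rv$; by transitivity of $R$ we get $wRv$, so by the assumption $\mathcal{M},v\vDash A$, which shows $\mathcal{M},w'\vDash\Box A$. Backward: assume $\mathcal{M},w'\vDash\Box A$ and $wRw'$; by symmetry $w'Rw$, so the forward direction (with the roles of $w$ and $w'$ swapped) yields $\mathcal{M},w\vDash\Box A$. Equivalently, one can just unfold and say: the sets $\{v: wRv\}$ and $\{v: w'Rv\}$ coincide whenever $wRw'$, because any element reachable from one is, by symmetry and transitivity, reachable from the other.

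For part (2), the argument is dual. Given $\mathcal{M},w\vDash\Diamond A$, pick a witness $v$ with $wRv$ and $\mathcal{M},v\vDash A$; then for $wRw'$, symmetry gives $w'Rw$ and transitivity gives $w'Rv$, so the same $v$ witnesses $\mathcal{M},w'\vDash\Diamond A$. The converse again follows by swapping $w$ and $w'$ using symmetry.

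For part (3), I only need symmetry: if $\mathcal{M},w\vDash A$ and $wRw'$, then $w'Rw$ by symmetry, so taking $v:=w$ gives an $R$-successor of $w'$ at which $A$ holds, i.e.\ $\mathcal{M},w'\vDash\Diamond A$. There is no real obstacle in this lemma; the only thing to be careful about is tracking the direction of $R$ in each step, which is why symmetry is invoked explicitly even though the statements look almost tautological.
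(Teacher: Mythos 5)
Your proof is correct and is exactly the argument the paper has in mind: the paper's own proof is just the one-line remark that the claims follow from the satisfaction clauses and the fact that $R$ is an equivalence relation, and your write-up simply makes explicit the uses of symmetry and transitivity that this remark is appealing to. No differences in approach worth noting.
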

\begin{proof}
	The proof clearly follows from   the definition  of satisfiability  and the fact that 
	$ R $ is an equivalence relation.
\end{proof}
\begin{The}[Soundness]\label{Soundness}
	If
	$ \Gamma\Rightarrow \Delta \,||\, {\cal H}$
	is provable in $ \cal{R}_{\sf S5} $, then it is S5-valid.
\end{The}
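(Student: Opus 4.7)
The plan is to proceed by induction on the height of the derivation in $\mathcal{R}_{\sf S5}$, showing that for every rule, validity of the premise(s) implies validity of the conclusion. Throughout, the key quantity is the formula interpretation $I(\Gamma \Rightarrow \Delta \,||\, P_1 \Rightarrow Q_1 \,|\cdots|\, P_n \Rightarrow Q_n) = \bigwedge\Gamma \rightarrow \bigvee\Delta \vee \bigvee_{i=1}^n \Box(\bigwedge P_i \rightarrow \bigvee Q_i)$, and the heavy lifting in the modal cases is provided by Lemma \ref{2.1}, which exploits the fact that $R$ is an equivalence relation, so that $\Box$- and $\Diamond$-formulas take the same value at all $R$-related worlds.

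For the base case I would check the three initial sequents: in each, the interpretation is either a propositional tautology or becomes trivially true because the shared atom $p$ (resp.\ constant $\bot$ or $\top$) forces the implication to hold; the crown $\mathcal{H}$ only adds extra disjuncts on the right and therefore does not disturb validity. For the inductive step, the propositional rules $\mathsf{L}\neg, \mathsf{R}\neg, \mathsf{L}\vee, \mathsf{R}\vee, \mathsf{L}\wedge, \mathsf{R}\wedge, \mathsf{L}\!\rightarrow, \mathsf{R}\!\rightarrow$ and the strictly local modal rules $\mathsf{L}\Box$ and $\mathsf{R}\Diamond$ are handled by a direct calculation: the extra disjunct $\bigvee_i \Box(\bigwedge P_i \rightarrow \bigvee Q_i)$ coming from the crown is the same on the premise and the conclusion, so the soundness argument reduces to the classical soundness check for the propositional or $\mathbf{K}/\mathbf{T}$ part of the rule.

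The genuinely nontrivial cases are $\mathsf{L}\Diamond$, $\mathsf{R}\Box$, and $\mathsf{Exch}$, where a component of the crown is spawned from or swapped with atomic material in the root. For $\mathsf{L}\Diamond$, assume the premise $A, M \Rightarrow N \,||\, \mathcal{H} \,|\, P \Rightarrow Q$ is valid and suppose for contradiction that the conclusion fails at some $(\mathcal{M}, w)$. Then $\mathcal{M}, w \vDash \Diamond A \wedge \bigwedge M \wedge \bigwedge P$, so pick $v$ with $wRv$ and $\mathcal{M}, v \vDash A$; by Lemma \ref{2.1} the modal formulas in $M$ remain true at $v$ and those in $N$ (together with the boxed components of $\mathcal{H}$) remain false at $v$, while the new boxed component $\Box(\bigwedge P \rightarrow \bigvee Q)$ is false at $w$ (since $P$ holds and $Q$ fails there, so the implication is falsified at $w$ itself, contradicting the $\mathbf{T}$-like reading via Lemma \ref{2.1}(1)) and therefore false at $v$. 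This contradicts validity of the premise at $v$. The case $\mathsf{R}\Box$ is dual: from a counterexample to the conclusion, a witness world $v$ falsifying the newly introduced $\Box A$ yields a counterexample to the premise by the same transfer of modal truth via Lemma \ref{2.1}. For $\mathsf{Exch}$, the argument is symmetric: all material involved in the swap between the root and the component $P_i \Rightarrow Q_i$ is either atomic or modal, so any $R$-related world preserves truth of the modal parts (Lemma \ref{2.1}), and a counterexample to the conclusion at $w$ becomes a counterexample to the premise at a $v$ witnessing the failure of $\Box(\bigwedge P_i \rightarrow \bigvee Q_i)$.

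The main obstacle I expect is precisely the $\mathsf{Exch}$ rule, because it simultaneously shuffles two components and requires the careful observation that the restriction ``$M, N$ modal, $P, P_i, Q, Q_i$ atomic'' is exactly what is needed for Lemma \ref{2.1} to transport the truth values between $w$ and the witness world $v$ without loss. Once that restriction is used explicitly, the argument for $\mathsf{Exch}$ runs in parallel with those for $\mathsf{L}\Diamond$ and $\mathsf{R}\Box$, and the induction closes.
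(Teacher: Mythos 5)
Your proposal is correct and follows essentially the same route as the paper: induction on derivation height, with the propositional and local modal rules handled by the observation that the crown disjuncts are unchanged, and the cases $\mathsf{L}\Diamond$, $\mathsf{R}\Box$, $\mathsf{Exch}$ handled by passing to an $R$-related witness world and transporting the truth values of the modal formulas in $M$, $N$ and of the boxed crown components via Lemma \ref{2.1}, using reflexivity to see that $\Box(\bigwedge P\rightarrow\bigvee Q)$ already fails at the original world. The paper phrases the $\mathsf{R}\Box$ case contrapositively (showing $A$ holds at every accessible world, hence $\Box A$ holds) rather than picking a witness for $\neg A$, but this is the same argument.
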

\begin{proof}
	The proof is by induction on the height of the derivation of $ \Gamma\Rightarrow \Delta\,||\, {\cal H} $. Initial sequents are obviously valid
	in  every Kripke model for S5.  We only check the induction step for rules  {\sf R$ \Box $} and {\sf Exch}.  The rule {\sf L$ \Diamond $} can be verified similarly. 
	\begin{itemize}
		\item
	Rule {\sf R$ \Box $}:
	%\begin{prooftree}
	%	\AXC{$ M\Rightarrow N,A\,||\,P\Rightarrow Q \,|\,{\cal H}$}
%	\RL{R$\Box $}
%	\UIC{$M,P\Rightarrow Q,N,\Box A \,||\,{\cal H}$}
%	\end{prooftree}
	Suppose that the sequent $ \Gamma\Rightarrow \Delta \,||\, {\cal H} $ is $M,P\Rightarrow Q,N,\Box A \,||\,{\cal H} $, the conclusion of rule {\sf R$ \Box $}, with the premise $M\Rightarrow N,A\,||\,P\Rightarrow Q \,|\,{\cal H} $. For convenience,  let  the hypersequent $ {\cal H} $ be a sequent $ P_1\Rightarrow Q_1 $. Suppose, by induction hypothesis, that the premise is valid, i.e., for every Kripke model $ \cal M $ we have
	\begin{equation}\label{3}
	\text{If}\,\, \mathcal{M},w\vDash \bigwedge M,\,\, \text{then}\,\,   \mathcal{M},w \vDash\bigvee N\vee A \vee \Box(\bigwedge P\rightarrow \bigvee Q)\vee \Box(\bigwedge P_1\rightarrow \bigvee Q_1).
	\end{equation}
	Assume   the conclusion is not S5-valid i.e., there is a  model $ \mathcal{M}=(W,R,V) $ and $ w'\in W $  such that 
	\begin{align}
	&\mathcal{M},w'\vDash \bigwedge M\wedge \bigwedge P\label{1} \\
	& \mathcal{M},w'\nvDash    \bigvee Q\vee  \bigvee N\vee\Box A \vee\Box(\bigwedge P_1\rightarrow \bigvee Q_1).\label{2}
	\end{align}
Thus,
	$\mathcal{M},w'\nvDash \bigvee N\vee \Box(\bigwedge P\rightarrow \bigvee Q)\vee\Box(\bigwedge P_1\rightarrow \bigvee Q_1)$. Suppose $ w'Rw $, it follows from \ref{1}
and Lemma \ref{2.1} that $ \mathcal{M},w\vDash \bigwedge M $ and 
	$\mathcal{M},w\nvDash \bigvee N\vee \Box(\bigwedge P\rightarrow \bigvee Q)\vee\Box(\bigwedge P_1\rightarrow \bigvee Q_1)$. Therefore, by (\ref{3}) we have
  $ \mathcal{M},w\vDash A $, and so $ \mathcal{M},w'\vDash \Box A $. This leads to a contradiction with (\ref{2}).
	\item
	Rule {\sf Exch}:
		Suppose that the sequent $ \Gamma\Rightarrow \Delta \,||\, {\cal H} $ is  the conclusion of the rule {\sf Exch}.
		For convenience, let the hypersequent $ \cal{H} $ be a sequent $ P_1\Rightarrow Q_1 $ and let $ \cal{G} $ be a sequent $ P_2\Rightarrow Q_2 $. Suppose, by  induction hypothesis, that the premise is valid i.e., for every  model $ \cal M $  we have:
		\begin{align}\label{6}
		&\text{If}\,\, \mathcal{M},w \vDash \bigwedge M\wedge\bigwedge P_i,\,\, \text{then}\nonumber\\
		&    \mathcal{M},w \vDash \bigvee Q_i\vee \bigvee N\vee\Box(\bigwedge P_1\rightarrow \bigvee Q_1)\vee\Box(\bigwedge P\rightarrow \bigvee Q)\vee \Box(\bigwedge P_2\rightarrow \bigvee Q_2).
		\end{align}
	Assume   the conclusion is not valid i.e., there is a  model $ \mathcal{M}=(W,R,V) $ and $ w'\in W $  such that
	\begin{align}
	&\mathcal{M},w'\vDash \bigwedge M \wedge \bigwedge P\label{4} \\
	& \mathcal{M},w'\nvDash \bigvee Q\vee  \bigvee N\vee\Box(\bigwedge P_1\rightarrow \bigvee Q_1)\vee \Box(\bigwedge P_i\rightarrow \bigvee Q_i)\vee \Box(\bigwedge P_2\rightarrow \bigvee Q_2)  \label{5}
	\end{align}
	By \ref{5}, we have $\mathcal{M},w\nvDash \bigwedge P_i\rightarrow \bigvee Q_i $ for some $ w$ such that $w'Rw$. Thus,  $\mathcal{M},w\vDash\bigwedge P_i$, and $\mathcal{M},w\nvDash\bigvee Q_i$. In addition, because $M$ is a multiset of modal formula, using Lemma \ref{2.1} and (\ref{4}) we can conclude that $\mathcal{M},w\vDash \bigwedge M$. That means, $\mathcal{M},w\vDash \bigwedge M \wedge\bigwedge P_i$ and $\mathcal{M},w\nvDash\bigvee Q_i$ for some $w$ such that $w'Rw$. It follows from (\ref{6}) that 
	\[\mathcal{M},w \vDash  \bigvee N\vee\Box(\bigwedge P_1\rightarrow \bigvee Q_1)\vee\Box(\bigwedge P\rightarrow \bigvee Q)\vee \Box(\bigwedge P_2\rightarrow \bigvee Q_2).\]
This, Using Lemma \ref{2.1}, leads to a contradiction with (\ref{5}).
	\end{itemize}
\end{proof}
\section{Structural properties}\label{Structural properties}
In this section, we prove the admissibility of  weakening and  contraction rules, and also  some properties of  $ \cal{R}_{\sf S5} $, which are used to prove the admissibility of  cut rule. Some  (parts of)  proofs are omitted because they are easy or similar
to the proofs in \cite{Buss,negri2008structural,bpt}.

 The \textit{height} of a derivation is the greatest number of successive applications of rules
in it, where initial sequents have height $0$. The notation
$ \vdash_n\Gamma\Rightarrow \Delta\, ||\, \cal{H} $
means that 
$ \Gamma\Rightarrow \Delta\, ||\, \cal{H} $
is derivable  with a height of derivation
at most $ n $ in the system $ \cal{R}_{\sf S5}  $. A rule of $ \cal{R}_{\sf S5}  $ is said to be (height-preserving) \textit{admissible} if whenever an instance of its premise(s)  is (are)
derivable in $ \cal{R}_{\sf S5}  $ (with at most  height $ n $), then so is the corresponding instance of its conclusion. A rule of $ \cal{R}_{\sf S5}  $ is said to be  \textit{invertible} if whenever an instance of its conclusion is derivable in $ \cal{R}_{\sf S5}  $, then so is the corresponding instance of its premise(s).

The following lemma, shows that the propositional rules are height-preserving  invertible. The proof is by induction on the height of derivations.
\begin{Lem}\label{invertibility of propositional rules }
	All propositional rules are height-preserving invertible in $ \cal{R}_{\sf S5}  $.
\end{Lem}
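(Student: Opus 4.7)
The plan is to fix a propositional rule $\mathsf{R}$ and an arbitrary derivation $\mathcal{D}$ of its conclusion of height $n$, and show by induction on $n$ that each premise of $\mathsf{R}$ is derivable with height at most $n$. I will carry the argument out for a representative selection of connectives (say $\mathsf{L}\wedge$, $\mathsf{R}\wedge$ and $\mathsf{R}{\to}$); the remaining cases are structurally identical, with only the bookkeeping of immediate subformulas changing.

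For the base case $n=0$, the conclusion $\Gamma\Rightarrow\Delta\,||\,\mathcal{H}$ is an instance of $\mathsf{Ax}$, $\mathsf{L}\bot$ or $\mathsf{R}\top$. Since the principal formula of any propositional rule is a proper compound (neither atomic, nor $\bot$, nor $\top$), the atom/$\bot$/$\top$ witnessing the initial sequent lies in the parametric part, and hence survives in each premise of $\mathsf{R}$. For example, if $\mathsf{L}\wedge$ is to be inverted and $A\wedge B,\Gamma'\Rightarrow\Delta\,||\,\mathcal{H}$ is an instance of $\mathsf{Ax}$ via some atomic $p$, then $p\in\Gamma',\Delta$ as well, so $A,B,\Gamma'\Rightarrow\Delta\,||\,\mathcal{H}$ is still an instance of $\mathsf{Ax}$ of height $0$. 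The remaining axiom cases are completely parallel.

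For the inductive step, let $\mathsf{R}'$ be the last rule used in $\mathcal{D}$. If $\mathsf{R}'=\mathsf{R}$ and its principal formula is exactly the occurrence we are targeting, then the desired premise(s) appear directly above $\mathsf{R}'$ at height $\leq n-1$, and we are done. Otherwise the formula to be decomposed is parametric in the conclusion of $\mathsf{R}'$, so it is still present (in the same side, root versus crown) in every premise of $\mathsf{R}'$. We apply the induction hypothesis to each such premise to obtain the inverted sequent at height $\leq n-1$, and then reapply $\mathsf{R}'$ to package the results back at height $\leq n$. For two-premise rules such as $\mathsf{R}\wedge$ and $\mathsf{L}\vee$ this procedure is performed for each premise separately, yielding both inverted premises.

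The only point requiring genuine checking is the behaviour under the modal rules. For $\mathsf{L}\Box$ and $\mathsf{R}\Diamond$, whose left/right context is arbitrary, the principal propositional formula can appear parametrically, and the commutation above proceeds unchanged, the crown $\mathcal{H}$ being passed through untouched. For $\mathsf{L}\Diamond$, $\mathsf{R}\Box$ and $\mathsf{Exch}$ the root of the conclusion is restricted to multisets $M,N$ of modal and $P,Q$ of atomic formulas, while the crown contains only atomic formulas; a proper propositional compound therefore cannot occur anywhere in the conclusion, so these subcases are simply vacuous. The main obstacle, and essentially the only non-mechanical part of the argument, is verifying precisely this vacuity and noting that the crown's purely atomic nature is preserved throughout, so that reapplication of every modal rule remains legal after the inductive appeal.
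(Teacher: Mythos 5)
Your proof is correct and follows exactly the route the paper intends: the paper states only that the lemma is proved ``by induction on the height of derivations,'' and your argument is the standard fleshed-out version of that induction, including the one point that genuinely needs checking in this calculus, namely that the conclusions of {\sf L$\Diamond$}, {\sf R$\Box$} and {\sf Exch} contain only modal and atomic formulas in the root and atomic formulas in the crown, so no propositional compound can be parametric there and those cases are vacuous. Nothing further is needed.
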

In order to prove the admissibility of cut rule, we introduce the following structural rules, called {\sf Merge} and $  {\sf Merge}^{\sf c}  $. Rule $  {\sf Merge}^{\sf c}  $ allows us to merge two crown components in the crown, and Rule {\sf Merge} allows us to merge a component in the crown with the root component.
\begin{Lem}\label{Merge}
	The following rules are  height-preserving admissible
		\begin{center}
		\AXC{$ \Gamma\Rightarrow \Delta\,||\, {\cal H} \,|\,P_i\Rightarrow Q_i\,|\, {\cal G}\,|\,P_j\Rightarrow Q_j\,|\,{\cal I} $}
		\RL{$ {\sf  Merge}^{\sf c} $}
		\UIC{$\Gamma\Rightarrow \Delta\,||\, {\cal H} \,|\,P_i,P_j\Rightarrow Q_i,Q_j\,|\, {\cal G}\,|\,{\cal I}$}
		\DP
		$ \quad $
		\AXC{$ \Gamma\Rightarrow \Delta\,||\, {\cal H} \,|\,P_i\Rightarrow Q_i\,|\, {\cal G} $}
		\RL{\sf Merge}
		\UIC{$ \Gamma,P_i\Rightarrow Q_i, \Delta\,||\, {\cal H} \,|\, {\cal G} $}
		\DP
	\end{center}
\end{Lem}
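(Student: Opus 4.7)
The plan is to prove both statements simultaneously by induction on the height $n$ of the derivation of the premise. The only cases that require genuine work are those in which the last rule $R$ applied interacts with the crown, namely $\mathsf{L}\Diamond$, $\mathsf{R}\Box$, and $\mathsf{Exch}$; in these cases the merging step and the rule $R$ must be commuted past each other, and this commutation will sometimes invoke the induction hypothesis for the \emph{other} merge rule.

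The base case is immediate: each of $\mathsf{Ax}$, $\mathsf{L}\bot$, $\mathsf{R}\top$ is determined by an atom, $\bot$, or $\top$ occurring in the root and does not depend on the crown, so it remains an axiom after atoms are added to the root or the crown is reshaped. When $R$ is a propositional rule, or is $\mathsf{L}\Box$ or $\mathsf{R}\Diamond$, $R$ acts entirely on the root and its principal formula is non-atomic, hence disjoint from the atomic multisets $P_i,Q_i$ being merged; applying the appropriate induction hypothesis to the premise(s) of $R$ and then reapplying $R$ gives the required merged derivation of the same height.

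For $R=\mathsf{L}\Diamond$ (the case $\mathsf{R}\Box$ is symmetric), the premise contains a fresh crown component $P\Rightarrow Q$ relative to the conclusion. If $\mathsf{Merge}^{\mathsf{c}}$ is being applied to two components of the conclusion, both still occur untouched in the crown of the premise; apply IH there and reapply $R$. If $\mathsf{Merge}$ absorbs a crown component $P_i\Rightarrow Q_i$ of the conclusion into the root, invoke IH for $\mathsf{Merge}^{\mathsf{c}}$ on the premise to fuse $P_i\Rightarrow Q_i$ with the fresh component $P\Rightarrow Q$; reapplying $\mathsf{L}\Diamond$ then deposits the combined $P,P_i\Rightarrow Q,Q_i$ into the root, which is exactly the target.

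The main obstacle is the case $R=\mathsf{Exch}$, because that rule swaps the atomic part $P,Q$ of the root with a crown component $P_i\Rightarrow Q_i$, mingling root and crown. The analysis splits into sub-cases according to how the components being merged relate to the two components swapped by $\mathsf{Exch}$. If neither merged component is the exchanged $P_i\Rightarrow Q_i$, apply the appropriate merge IH to the premise and reapply $\mathsf{Exch}$. If $\mathsf{Merge}$ absorbs the exchanged $P_i\Rightarrow Q_i$ into the root, use IH for $\mathsf{Merge}$ on the premise (which absorbs $P\Rightarrow Q$ into the root) to obtain the target directly, with no further $\mathsf{Exch}$ needed. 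If $\mathsf{Merge}^{\mathsf{c}}$ merges $P_i\Rightarrow Q_i$ with another crown component $P_j\Rightarrow Q_j$, first use IH for $\mathsf{Merge}$ on the premise to pull $P_j\Rightarrow Q_j$ into the root, then reapply $\mathsf{Exch}$ to return the combined component $P_i,P_j\Rightarrow Q_i,Q_j$ to the crown. Finally, if $\mathsf{Merge}$ absorbs an untouched crown component $P_k\Rightarrow Q_k$ into the root, use IH for $\mathsf{Merge}^{\mathsf{c}}$ on the premise to fuse $P_k\Rightarrow Q_k$ with the crown's $P\Rightarrow Q$, then reapply $\mathsf{Exch}$. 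In each sub-case a single IH invocation is followed by at most one additional rule application, so the height of the derivation is preserved.
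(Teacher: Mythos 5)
Your proposal is correct and follows essentially the same strategy as the paper: a simultaneous height-preserving induction in which the ${\sf Merge}$ and ${\sf Merge}^{\sf c}$ induction hypotheses are invoked for one another in the {\sf L}$\Diamond$/{\sf R}$\Box$ and {\sf Exch} cases, with the merge commuted past the last rule. You are in fact slightly more explicit than the paper in the {\sf Exch} subcase where {\sf Merge} absorbs a crown component untouched by the exchange (where a naive ``apply IH and reapply {\sf Exch}'' would fail and the detour through ${\sf Merge}^{\sf c}$ is needed), which the paper leaves implicit.
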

\begin{proof}
	Both rules are proved simultaneously  by induction  on the height of the derivations of the premises. In both rules, if  the premise is an initial sequent, then  the conclusion is an initial sequent too. 
	For the induction step, we consider only   cases where  the last rule  is {\sf L$\Diamond $} or {\sf Exch}; since the rule {\sf R$\Box $} is treated symmetrically and  for the remaining 
	rules it  suffices to apply the induction hypothesis to the premise and
	then use the same rule to obtain deduction of the conclusion.\\
	For the rule  {\sf Merge} we consider the following  cases.
	
	Case 1. Let $ \Gamma=\Diamond A,M,P $ and 
	$ \Delta=Q,N $ 
	and let $ \Diamond A $ be the principal formula:
	\begin{prooftree}
		\AXC{$ \D $}
		\noLine
		\UIC{$ \vdash_n   A,M \Rightarrow N\,||\, {\cal H} \,|P_i\Rightarrow Q_i\,|\, {\cal G}\,|\, P\Rightarrow Q$}
		\RL{{\sf L$\Diamond $},}
		\UIC{$\vdash_{n+1} \Diamond A,M,P \Rightarrow Q,N\,||\, {\cal H} \,|P_i\Rightarrow Q_i\,|\, {\cal G} $}
	\end{prooftree}
	By induction hypothesis (IH), we have:
	\begin{prooftree}
		\AXC{$ \D $}
		\noLine
		\UIC{$ \vdash_n   A,M \Rightarrow N\,||\, {\cal H} \,|P_i\Rightarrow Q_i\,|\, {\cal G}\,|\, P\Rightarrow Q $}
		\RL{IH ($ {\sf  Merge}^{\sf c} $)}
		\UIC{$ \vdash_n   A,M \Rightarrow N\,||\, {\cal H} \,|\, {\cal G}\,|\, P,P_i\Rightarrow Q_i,Q$}
		\RL{{\sf L$\Diamond $}.}
		\UIC{$ \vdash_{n+1} \Diamond A,M,P,P_i\Rightarrow Q_i,Q,N \,||\,{\cal H} \,|\, {\cal G} $}
	\end{prooftree}

	Case 2. Let $ \Gamma=M,P$ and  $ \Delta=Q,N $, and let the last rule be as:
	\begin{prooftree}
		\AXC{$\vdash_n M,P_i\Rightarrow Q_i,N\,||\, {\cal H} \,|\,P\Rightarrow Q\,|\, {\cal G}   $}
		\RL{{\sf Exch}.}
		\UIC{$\vdash_{n+1} M,P\Rightarrow Q,N\,||\, {\cal H} \,|\,P_i\Rightarrow Q_i\,|\, {\cal G}  $}
	\end{prooftree}
	In this case, by induction hypothesis, the sequent $\vdash_n M,P,P_i\Rightarrow Q_i,Q,N\,||\, {\cal H} \,|\, {\cal G}   $ is obtained.\\
	For the rule $ {\sf  Merge}^{\sf c} $ we consider the following  cases.
	 
	Case 1. The  premise is derived by {\sf Exch}. If this rule does not apply to  $ P_i\Rightarrow Q_i $ and $ P_j\Rightarrow Q_j $, then the conclusion is obtained by applying induction hypothesis and then applying the same rule {\sf Exch}. Therefore, let the last rule be as follows in which the rule {\sf Exch} apply to the component $ P_i \Rightarrow Q_i $:
	\begin{prooftree}
		\AXC{$ \vdash_n M,P_i\Rightarrow Q_i,N\,||\, {\cal H} \,|\,P\Rightarrow Q\,|\, {\cal G}\,|\,P_j\Rightarrow Q_j\,|\,{\cal I}  $}
		\RL{{\sf Exch},}
		\UIC{$\vdash_{n+1} M,P\Rightarrow Q,N\,||\, {\cal H} \,|\,P_i\Rightarrow Q_i\,|\, {\cal G}\,|\,P_j\Rightarrow Q_j\,|\,{\cal I} $}
		\end{prooftree} 
where $ \Gamma=M,P $ and $ \Delta=Q,N $. Then the conclusion is derived as follows
		\begin{prooftree}
		\AXC{$ \vdash_n M,P_i\Rightarrow Q_i,N\,||\, {\cal H} \,|\,P\Rightarrow Q\,|\, {\cal G}\,|\,P_j\Rightarrow Q_j\,|\,{\cal I}   $}
		\RL{{\sf Merge} (IH)}
		\UIC{$\vdash_n M,P_i,P_j\Rightarrow Q_j,Q_i,N\,||\, {\cal H} \,|\,P\Rightarrow Q\,|\, {\cal G}\,|\,{\cal I} $}
		\RL{{\sf Exch}.}
		\UIC{$\vdash_{n+1} M,P\Rightarrow Q,N\,||\, {\cal H} \,|\,P_i,P_j\Rightarrow Q_j,Q_i\,|\, {\cal G}\,|\,{\cal I} $}	
	\end{prooftree}

Case 2. The  premise is derived by {\sf L$ \Diamond $}. 
	\begin{prooftree}
	\AXC{$ \vdash_n A,M\Rightarrow N\,||\, {\cal H} \,|\,P_i\Rightarrow Q_i\,|\, {\cal G}\,|\,P_j\Rightarrow Q_j\,|\,{\cal I}\,|\,P\Rightarrow Q$}
	\RL{{\sf L$ \Diamond $},}
	\UIC{$\vdash_{n+1} \Diamond A,M,P\Rightarrow Q,N\,||\, {\cal H} \,|\,P_i\Rightarrow Q_i\,|\, {\cal G}\,|\,P_j\Rightarrow Q_j\,|\,{\cal I}$}
\end{prooftree}
where $ \Gamma=\Diamond A,M,P $ and $ \Delta= Q,N $.
Then  the conclusion is obtained as follows:
\begin{prooftree}
	\AXC{$ \vdash_n A,M\Rightarrow N\,||\, {\cal H} \,|\,P_i\Rightarrow Q_i\,|\, {\cal G}\,|\,P_j\Rightarrow Q_j\,|\,{\cal I}\,|\,P\Rightarrow Q$}
	\RL{IH}
	\UIC{$\vdash_n A,M\Rightarrow N\,||\, {\cal H} \,|\,P_i,P_j\Rightarrow Q_i,Q_j\,|\, {\cal G}\,|\,{\cal I}\,|\,P\Rightarrow Q$}
	\RL{{\sf L$ \Diamond $}.}
	\UIC{$\vdash_{n+1} \Diamond A,M,P\Rightarrow Q,N\,|| {\cal H} \,|\,P_i,P_j\Rightarrow Q_i,Q_j\,|\, {\cal G}\,|\,{\cal I}$}
\end{prooftree}
\end{proof}
\subsection{Admissibility of  weakening }
 In this subsection, we prove the admissibility of  structural rules of external weakening $ {\sf EW} $, crown weakening  $ {\sf W}^{\sf c} $ , left  rooted weakening $ {\sf LW} $,  right rooted weakening $ {\sf RW} $.
 \begin{Lem}\label{external weakening}
 	The rule of external weakening:
 	\begin{prooftree}
 		\AXC{$ \Gamma\Rightarrow \Delta\,||\, {\cal H}\,|\, \cal{G} $}
 		\RL{$ \sf EW $}
 		\UIC{$ \Gamma\Rightarrow \Delta\,||\, {\cal H}\,|\, P\Rightarrow Q\,|\, \cal{G} $}
 	\end{prooftree}
 	is height-preserving admissible in $ {\cal R}_{\sf S5} $.
 \end{Lem}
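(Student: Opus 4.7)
The plan is to prove this by straightforward induction on the height $n$ of the derivation of the premise $\Gamma\Rightarrow\Delta\,||\,\mathcal{H}\,|\,\mathcal{G}$, showing that the same height suffices for the conclusion $\Gamma\Rightarrow\Delta\,||\,\mathcal{H}\,|\,P\Rightarrow Q\,|\,\mathcal{G}$.

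For the base case ($n=0$), the premise is one of the initial sequents \textsf{Ax}, \textsf{L}$\bot$, or \textsf{R}$\top$. Whether a sequent is an initial sequent is determined solely by the root (the presence of a common atom $p$ on both sides, or $\bot$ on the left, or $\top$ on the right), and the shape of the crown is immaterial to this check. Hence adding an extra component $P\Rightarrow Q$ to the crown yields another initial sequent of the same form, still of height $0$.

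For the inductive step I would go through the rules according to how they interact with the crown. For every propositional rule and for \textsf{L}$\Box$ and \textsf{R}$\Diamond$, the crown is simply carried along unchanged from conclusion to premise(s); one applies the induction hypothesis to each premise (inserting $P\Rightarrow Q$ in the same position relative to $\mathcal{H}$ and $\mathcal{G}$) and then reapplies the same rule, which still matches since its side conditions concern only the root. For \textsf{L}$\Diamond$ and \textsf{R}$\Box$, the premise has one additional component in its crown compared to the conclusion, but the newly inserted $P\Rightarrow Q$ sits in a separate position in $\mathcal{H}\,|\,P\Rightarrow Q\,|\,\mathcal{G}$, so applying the induction hypothesis to the premise and then reapplying the rule produces exactly the desired conclusion. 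For \textsf{Exch}, the rule acts on some component $P_i\Rightarrow Q_i$ lying in $\mathcal{H}$ or in $\mathcal{G}$; adding $P\Rightarrow Q$ in a disjoint position does not disturb the target component, so the induction hypothesis plus \textsf{Exch} gives the result.

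There is essentially no real obstacle in this argument: because hypersequents are multisets of sequents and the newly inserted component $P\Rightarrow Q$ is never principal in any rule instance, the induction goes through uniformly. The only point that requires a little care is bookkeeping the position of the inserted component across \textsf{Exch} applications, so that one can always point to the copy of $P\Rightarrow Q$ sitting in the crown of the premise and thereby invoke the induction hypothesis; this is handled by treating the crown as a multiset throughout.
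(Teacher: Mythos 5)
Your proof is correct and is exactly the "straightforward induction on the height of the derivation of the premise" that the paper itself invokes (the paper gives no further detail). The key observation you make --- that the inserted component $P\Rightarrow Q$ is never principal in any rule, so every case reduces to applying the induction hypothesis and reapplying the same rule --- is precisely what makes the paper's one-line proof go through.
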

 \begin{proof}
 	 By straightforward induction on the height of the derivation of the premise.
 \end{proof}
 \begin{Lem}\label{modal and atomic weakening}
 The  rule of crown weakening: 
 \begin{prooftree}
 	\AXC{$ \Gamma\Rightarrow\Delta\,||\, {\cal H}\,|\,P_i\Rightarrow Q_i\,|\, {\cal G} $}
 	\RL{$ {\sf W}^{\sf c} $}
 	\UIC{$ \Gamma\Rightarrow\Delta\,||\, {\cal H}\,|\,P,P_i\Rightarrow Q_i,Q\,|\, {\cal G} $}
 	\end{prooftree}
is height-preserving admissible in $ {\cal R}_{\sf S5} $.
 \end{Lem}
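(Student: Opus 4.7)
The plan is a straightforward induction on the height $n$ of the derivation of the premise, in the style of the proof of Lemma \ref{Merge}. The base case is immediate: the initial sequents Ax, L$\bot$, R$\top$ depend only on the root, so enlarging an existing crown component with extra atomic formulas $P$ and $Q$ leaves the sequent an initial sequent, derivable at height $0$.

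For the inductive step I case on the last rule of the derivation. The propositional rules and the modal rules L$\Box$, R$\Diamond$ leave the crown untouched, so the induction hypothesis applied to the premise(s) followed by the same rule delivers the weakened conclusion at the desired height. For L$\Diamond$ and R$\Box$, the crown ${\cal H}$ of the conclusion is a submultiset of the crown ${\cal H}\,|\,P\Rightarrow Q$ of the premise (the component $P\Rightarrow Q$ migrates from the crown into the root, not the reverse), so any crown component targeted by ${\sf W}^{\sf c}$ in the conclusion already occurs in the premise and can be weakened there by the IH before reapplying the rule. The same observation handles the Exch case whenever the component targeted by ${\sf W}^{\sf c}$ lies in the untouched parts ${\cal H}$ or ${\cal G}$ of the crown.

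The only case requiring a real argument is when the last rule is Exch and ${\sf W}^{\sf c}$ targets precisely the crown component introduced by that Exch, that is, when the conclusion
\[M, P \Rightarrow Q, N \,||\, {\cal H}\,|\,P_i \Rightarrow Q_i\,|\,{\cal G}\]
is derived from $M, P_i \Rightarrow Q_i, N \,||\, {\cal H}\,|\,P \Rightarrow Q\,|\,{\cal G}$ of height $n$, and the goal is $M, P \Rightarrow Q, N \,||\, {\cal H}\,|\,P',P_i \Rightarrow Q_i,Q'\,|\,{\cal G}$. Here $P_i$ and $Q_i$ sit in the root of the premise, so a direct appeal to the IH is unavailable. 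The remedy is to combine the already established height-preserving admissibility of EW (Lemma \ref{external weakening}) and Merge (Lemma \ref{Merge}). First, EW inserts a fresh crown component into the Exch premise, yielding
\[M, P_i \Rightarrow Q_i, N \,||\, {\cal H}\,|\,P \Rightarrow Q\,|\,{\cal G}\,|\,P' \Rightarrow Q'\]
at height $n$; next, Merge absorbs this new component into the root, producing $M, P', P_i \Rightarrow Q', Q_i, N \,||\, {\cal H}\,|\,P \Rightarrow Q\,|\,{\cal G}$, still at height $n$; finally, a single application of Exch swaps $P \Rightarrow Q$ with the enlarged atomic part $P', P_i \Rightarrow Q_i, Q'$ of the root, delivering the goal at height $n+1$, which matches the original height of the Exch derivation.
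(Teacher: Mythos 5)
Your proof is correct, but it takes a more laborious route than the paper's. The paper disposes of the lemma in one line: apply {\sf EW} (Lemma \ref{external weakening}) to append a fresh component $P\Rightarrow Q$ to the crown of the given derivation, then apply $ {\sf Merge}^{\sf c} $ (Lemma \ref{Merge}) to fuse that component with $P_i\Rightarrow Q_i$; since both rules are height-preserving admissible, so is $ {\sf W}^{\sf c} $, with no induction and no case analysis on the last rule. You instead run a fresh induction on the height of the derivation, and the only case that needs work --- {\sf Exch} introducing exactly the targeted component --- is handled by the same combination of ideas ({\sf EW}, then the root-level {\sf Merge}, then {\sf Exch}), so you are in effect re-proving a special instance of $ {\sf Merge}^{\sf c} $ by hand. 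Your case analysis is sound: the observation that in {\sf L$\Diamond$} and {\sf R$\Box$} the conclusion's crown is contained in the premise's crown is exactly right, and the height bookkeeping in the {\sf Exch} case ($n$ after {\sf EW} and {\sf Merge}, $n+1$ after {\sf Exch}) matches the original height. What your approach buys is independence from $ {\sf Merge}^{\sf c} $ (you only use the root version of {\sf Merge}); what it costs is redoing the rule-by-rule analysis that Lemma \ref{Merge} already packaged. Since the paper proves {\sf Merge} and $ {\sf Merge}^{\sf c} $ simultaneously anyway, there is no real economy, and the global two-step composition is the cleaner argument.
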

\begin{proof}
	{\rm 
	It follows by the height-preserving admissibility of the two rules $ \sf EW $  and $ {\sf  Merge}^{\sf c} $.
}
\end{proof}
\begin{Lem} \label{W2}
	The rules of left and right weakening:
	\begin{center}
		\AXC{$ \Gamma\Rightarrow \Delta\,||\, {\cal H} $}
		\RL{\sf LW}
		\UIC{$ A,\Gamma\Rightarrow \Delta\,||\, {\cal H} $}
		\DP
		$\qquad$
		\AXC{$ \Gamma\Rightarrow \Delta \,||\, {\cal H}$}
		\RL{\sf RW,}
		\UIC{$ \Gamma\Rightarrow \Delta,A\,||\, {\cal H} $}
		\DP
	\end{center}
	are  admissible, where $ A $   is an arbitrary formula.
\end{Lem}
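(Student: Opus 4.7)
The plan is to prove \textsf{LW} and \textsf{RW} simultaneously by induction on the complexity of the weakened formula $A$; a simultaneous treatment is required since the negation case interchanges the two rules via \textsf{L$\neg$} and \textsf{R$\neg$}.

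For the base cases---when $A$ is atomic (taking $\top$ and $\bot$ to be atomic here) or when $A$ is a modal formula---I will strengthen the statement to height-preserving admissibility and argue by a secondary induction on the height of the derivation of $\Gamma\Rightarrow\Delta\,||\,{\cal H}$. Initial sequents are immediate, and for propositional rules one simply applies the inner IH to each premise and reapplies the same rule. The delicate cases are \textsf{L$\Diamond$}, \textsf{R$\Box$}, and \textsf{Exch}, whose conclusions restrict the root to atomic and modal formulas. When $A$ is modal, I absorb it into the multiset $M$ (or $N$) of modal formulas occurring in the rule's conclusion, apply IH to the premise (where $A$ can be added freely to the modal context), and reapply the rule. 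When $A$ is atomic, I use the height-preserving admissibility of crown weakening (Lemma~\ref{modal and atomic weakening}, rule $\sf W^c$) to inject $A$ into the crown component that appears in the premise of the modal or exchange rule, and then reapply that rule so that the enlarged atomic multiset is carried back into the root of the conclusion.

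For the inductive step, each compound $A$ is handled by its corresponding left or right rule together with IH on the immediate subformulas. For \textsf{LW} with $A=B\wedge C$, I apply IH twice to add $B$ and then $C$ to the antecedent and close with \textsf{L$\wedge$}; for \textsf{RW} with the same $A$, two invocations of IH (one per conjunct) feed the two premises of \textsf{R$\wedge$}. The cases of $\vee$ and $\to$ are entirely analogous, and for $A=\neg B$ the induction invokes the \emph{other} weakening rule on $B$ (from the hypothesis of \textsf{L$\neg$} or \textsf{R$\neg$}), which is legitimate since the structural induction on $A$ is joint for both \textsf{LW} and \textsf{RW}.

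The main obstacle is the atomic base case inside the shape-restricted rules \textsf{L$\Diamond$}, \textsf{R$\Box$}, and \textsf{Exch}. Since these rules disallow arbitrary formulas in the root of the conclusion, one cannot merely append the new atom to $\Gamma$ or $\Delta$ and reapply the rule; instead, the atom must be routed through the crown via Lemma~\ref{modal and atomic weakening} and then brought back to the root by the modal rule's own transfer mechanism. This is also precisely the reason height-preservation fails for arbitrary $A$: dismantling the compound structure through propositional rules costs extra height, so the outer induction on $A$ only yields admissibility, not height-preservation. Once these two subtleties are set up correctly, the remaining cases are mechanical.
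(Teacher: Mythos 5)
Your proof is correct and follows essentially the same strategy as the paper: a simultaneous induction on the complexity of $A$ with a subinduction on the height of the derivation, treating atomic and modal formulas as the base cases and dispatching compound formulas with the corresponding logical rules (using the joint induction to swap between {\sf LW} and {\sf RW} at negation and implication). The only divergence is the atomic base case, where the paper avoids the inner induction altogether by applying the height-preserving rule {\sf EW} to create a fresh crown component containing the atom and then {\sf Merge} to push it into the root, whereas you thread the atom through the premises of {\sf L$\Diamond$}, {\sf R$\Box$} and {\sf Exch} via ${\sf W}^{\sf c}$ --- both routes work and rest on the same crown-manipulation lemmas.
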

\begin{proof}
	{\rm
		Both rules are proved simultaneously  by induction on the complexity of $ A $ with subinduction on the height of the derivations. The admissibility of the rules for atomic formula follows by admissibility of the two rules {\sf EW} and {\sf Merge}. The admissibility of the rules for modal  formula are straightforward by induction on the height of the derivation. The other cases are proved easily.
	}
\end{proof}
\subsection{Invertibility }
In this subsection, first we introduce a normal form called Quasi Normal Form, which are used to prove the  admissibility of the contraction and cut rules. Then we show that the structural and modal rules  are invertible.
\begin{Def}[Quasi-literal]
	A quasi-literal is an atomic formula, a negation of atomic formula,  a modal formula, or a negation of modal formula.
	\end{Def}
\begin{Def}[Quasi-clause and quasi-phrase]
A quasi-clause (quasi-phrase) is either a single quasi-literal or a conjunction (disjunction) of quasi-literals.
	\end{Def}
\begin{Def}
	A formula is in  conjunctive (disjunctive) quasi-normal form, abbreviated CQNF (DQNF), iff it is a conjunction (disjunction) of quasi-clauses.
	\end{Def}
\begin{Examp}
	The formula $(\neg\Box(A\rightarrow B)\vee p \vee \Diamond C)\wedge(\neg q)\wedge(\Box\Diamond A\vee \neg \Diamond(A\wedge B)\vee\neg r)$ is in CQNF, which $(\neg\Box(A\rightarrow B)\vee p \vee \Diamond C)$, $(\neg q)$, and $(\Box\Diamond A\vee \neg \Diamond(A\wedge B)\vee\neg r)$ are quasi-clauses.
\end{Examp}
Without loss of generality we can assume that every formula in CQNF is as follows
\[\bigwedge_{i=1}^k(\bigvee P_i\vee\bigvee \neg Q_i\vee\bigvee M_i\vee\bigvee \neg N_i),\]
and every formula in DQNF is as follows
\[\bigvee_{i=1}^l(\bigwedge P_i\vee\bigwedge \neg Q_i\vee\bigwedge M_i\vee\bigwedge \neg N_i),\]
where $P_i$ and $Q_i$ are multisets of atomic formulae and $M_i$ and $N_i$ are multisets of modal formulae.

Similar to the  propositional logic, Every formula  can be transformed into an equivalent formula in CQNF, and an equivalent formula in DQNF.

For these quasi-normal forms we have the following lemmas.
\begin{Lem}\label{NFcut1}
	Let $ A $ be an arbitrary  formula, and let $\bigwedge_{i=1}^k(\bigvee \neg P_i\vee\bigvee  Q_i\vee\bigvee \neg M_i\vee\bigvee  N_i)$ and $\bigvee_{i=1}^{l}(\bigwedge P'_i\vee\bigwedge \neg Q'_i\vee\bigwedge M'_i\vee\bigwedge \neg N'_i)$ be  equivalent formulae in CQNF and DQNF, respectively, for $A$. Then
	\begin{itemize}
		\item[{\rm (i)}]
		$ \vdash\Gamma\Rightarrow \Delta, A\,||\, {\cal H} $
		iff
		$\vdash M_i,P_i,\Gamma\Rightarrow \Delta,   Q_i, N_i\,||\, {\cal H}$,
		for every $i=1,\ldots,k$.
		\item[{\rm (ii)}]
		$\vdash A,\Gamma\Rightarrow \Delta\,||\, {\cal H} $
		iff  
		$\vdash	M'_i,  P'_i,\Gamma\Rightarrow \Delta,Q'_i, N'_i\,||\, {\cal H}$,
		for every $i=1,\ldots,l$.	
	\end{itemize}
\end{Lem}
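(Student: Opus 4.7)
The plan is to prove (i) and (ii) simultaneously by induction on the complexity of $A$, exploiting the height-preserving invertibility of all propositional rules established in Lemma~\ref{invertibility of propositional rules }. The guiding intuition is that any CQNF (resp.\ DQNF) of $A$ records precisely the collection of leaf sequents one obtains by systematically decomposing $A$ bottom-up using only propositional rules; since these rules are invertible (and also applicable downwards), the original sequent is derivable if and only if all of these leaves are. No appeal to cut or contraction is needed at this stage, which is essential since cut admissibility is what we are ultimately aiming at.

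For the base case, $A$ is a quasi-literal. If $A$ is an atomic or modal formula, its CQNF consists of the single clause $A$ itself, so the decomposed sequent literally coincides with the original one. If $A$ is a negated atom or negated modal formula, the biconditional is immediate from applying and inverting {\sf R$\neg$} (for (i)) or {\sf L$\neg$} (for (ii)).

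For the inductive step I would split on the principal connective of $A$. When $A=B\wedge C$, invertibility of {\sf R$\wedge$} reduces derivability of $\Gamma\Rightarrow\Delta,B\wedge C\,||\,\mathcal{H}$ to derivability of both $\Gamma\Rightarrow\Delta,B\,||\,\mathcal{H}$ and $\Gamma\Rightarrow\Delta,C\,||\,\mathcal{H}$, and since a CQNF of $B\wedge C$ is obtained by concatenating CQNFs of $B$ and $C$, applying the IH to each conjunct delivers exactly the required leaves. When $A=B\vee C$, invertibility of {\sf R$\vee$} passes to $\Gamma\Rightarrow\Delta,B,C\,||\,\mathcal{H}$; applying the IH to $B$ (with succedent extended by $C$) produces sequents of the form $M^B_i,P^B_i,\Gamma\Rightarrow\Delta,C,Q^B_i,N^B_i\,||\,\mathcal{H}$, each of which is then further decomposed via the IH applied to $C$, so the resulting family matches the distributed CQNF of $B\vee C$. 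The case $A=B\rightarrow C$ is analogous via invertibility of {\sf R$\rightarrow$}, this time combining IH~(ii) on $B$ with IH~(i) on $C$. The case $A=\neg B$ uses invertibility of {\sf R$\neg$} to reduce to $B,\Gamma\Rightarrow\Delta\,||\,\mathcal{H}$ and then invokes IH~(ii) on $B$, observing that a CQNF of $\neg B$ is the de~Morgan dual of a DQNF of $B$ so the leaves match term-by-term. Part (ii) is treated symmetrically, with the $L$-rules replacing the $R$-rules and the roles of CQNF and DQNF interchanged; in particular $A=\neg B$ in (ii) reduces via {\sf L$\neg$} to the succedent case for $B$.

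The main obstacle will be the disjunctive case, where a CQNF of $B\vee C$ is not simply a concatenation but arises from a distributivity step: each of its conjuncts pairs one conjunct of a CQNF of $B$ with one conjunct of a CQNF of $C$. The bookkeeping must show that successively applying the IH first to $B$ (within the context $\Delta,C$) and then to $C$ in each resulting branch produces exactly this Cartesian-product family of decomposed sequents, with the antecedent and succedent multisets merging correctly. Once this combinatorial matching is in place the rest of the induction is routine, and the choice of the particular CQNF (DQNF) is immaterial because any two such normal forms have the same collection of clauses up to reordering and literal duplication, both of which are absorbed by the height-preserving admissibility of {\sf LW} and {\sf RW} from Lemma~\ref{W2}.
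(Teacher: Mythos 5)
Your proposal is correct and follows essentially the same route as the paper: both directions rest on decomposing $A$ propositionally so that the leaf sequents are exactly the $M_i,P_i,\Gamma\Rightarrow\Delta,Q_i,N_i\,||\,\mathcal{H}$, using the height-preserving invertibility of the propositional rules (Lemma~\ref{invertibility of propositional rules }) for the only-if direction and direct downward application of the same rules for the if direction; your explicit structural induction with the distributivity bookkeeping is just a more detailed organization of the paper's ``backward proof search'' argument. The one inaccurate side remark is your closing claim that any two CQNFs of $A$ agree up to reordering and literal duplication --- this is false in general (e.g.\ $(p\vee q)\wedge(\neg p\vee q)$ and $q$ are equivalent CQNFs with different clauses), so the lemma should really be read as referring to the canonical normal form produced by the decomposition; the paper's own proof tacitly makes the same restriction, so this does not distinguish the two arguments.
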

\begin{proof}
	We consider the first assertion; the second is treated symmetrically.\\
 $ (\Leftarrow) $. Suppose that $ M_i,P_i,\Gamma\Rightarrow \Delta,   Q_i, N_i\,||\, {\cal H}$ is derivable
 for every $i=1,\ldots,k$. By applying propositional rules, where non-modal subformulae of $ A $  are principal formulae in a backward proof search with  $ \Gamma\Rightarrow \Delta, A\,||\, {\cal H} $ in the bottom.
 We obtain a derivation, in which the topsequents are  $ M_i,P_i,\Gamma\Rightarrow \Delta,   Q_i, N_i\,||\, {\cal H}$, $i=1,\ldots,k$.\\
$ (\Rightarrow) $. Suppose 	$ \Gamma\Rightarrow \Delta, A \,||\, {\cal H}$ is derivable. By applying the invertibility of the propositional rules, we reach a derivation for $ M_i,P_i,\Gamma\Rightarrow \Delta,   Q_i, N_i\,||\, {\cal H}$,
for every $i=1,\ldots,k$.
\end{proof}
\begin{Cor}\label{left to right1}
	Both rules in the above lemma from left to right are height-preserving admissible. In other words:
		\item[{\rm (i)}]
	If $ \vdash_n\Gamma\Rightarrow \Delta, A\,||\, {\cal H} $,
	then
	$\vdash_n M_i,P_i,\Gamma\Rightarrow \Delta,   Q_i, N_i\,||\, {\cal H}$,
	for every $i=1,\ldots,k$.
	\item[{\rm (ii)}]
   If	$\vdash_n A,\Gamma\Rightarrow \Delta\,||\, {\cal H} $,
	then 
	$\vdash_n	M'_i,  P'_i,\Gamma\Rightarrow \Delta,Q'_i, N'_i\,||\, {\cal H}$,
	for every $i=1,\ldots,l$.
\end{Cor}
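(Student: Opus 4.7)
The plan is to observe that Corollary \ref{left to right1} is a direct strengthening of the $(\Rightarrow)$ direction of Lemma \ref{NFcut1}, obtained by tracking the derivation height through its proof. That direction proceeded by iteratively applying the invertibility of propositional rules (namely $ \sf R\wedge $, $ \sf R\vee $, $ \sf R\neg $, $ \sf L\wedge $, $ \sf L\vee $, $ \sf L\neg $) to decompose $A$ until the only remaining formulas are the quasi-literals corresponding to the clauses of its CQNF (respectively, DQNF). By Lemma \ref{invertibility of propositional rules }, each of these inversions is height-preserving, and a finite composition of height-preserving operations is itself height-preserving. Hence the bound $\leq n$ is maintained throughout the decomposition.

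More concretely, I would proceed by induction on the complexity of $A$, treating (i) and (ii) simultaneously. In the base case, $A$ is a quasi-literal, so both the CQNF and the DQNF consist of a single clause equal to $A$, and the conclusion coincides with the hypothesis (after at most one height-preserving application of $ \sf R\neg $ or $ \sf L\neg $ invertibility when $A$ is a negated atom or negated modal formula). In the inductive step, the outermost propositional connective of $A$ is removed by the height-preserving inversion of the corresponding rule: one or two premise sequents of height at most $n$ are produced, to which the induction hypothesis applies. The resulting sequents are exactly those prescribed by the CQNF (or DQNF) clauses of $A$, where the normal form of a compound is assembled from those of its components in the standard way (negation swaps CQNF and DQNF; disjunction in CQNF and conjunction in DQNF involve distribution). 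This bookkeeping is routine and does not affect the height.

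Part (ii) is entirely symmetric to part (i), with the roles of antecedent and succedent interchanged and DQNF used in place of CQNF. The handling of each outer connective mirrors the corresponding case in (i), now via the left-rule inversions.

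There is no genuine obstacle. The entire content of the corollary is the observation that every step used in the proof of Lemma \ref{NFcut1}$(\Rightarrow)$ was already known to be height-preserving by Lemma \ref{invertibility of propositional rules }; no appeal to cut or to any non-height-preserving principle is made. The only mild subtlety is keeping the correspondence between the syntactic decomposition tree and the clause structure of the chosen normal form, but this is a purely combinatorial matter orthogonal to the height bound.
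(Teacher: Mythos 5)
Your proposal is correct and matches the paper's (implicit) argument: the paper states this corollary without a separate proof precisely because the $(\Rightarrow)$ direction of Lemma \ref{NFcut1} is carried out solely by the inversions of propositional rules, which Lemma \ref{invertibility of propositional rules } already guarantees to be height-preserving. Your added induction on the complexity of $A$ just makes explicit the bookkeeping the paper leaves tacit.
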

\begin{Lem}\label{NFcut2}
		Let $ A $ be an arbitrary  formula, and let $\bigwedge_{i=1}^k(\bigvee\neg P_i\vee\bigvee  Q_i\vee\bigvee\neg  M_i\vee\bigvee N_i)$ and $\bigvee_{i=1}^{l}(\bigwedge P'_i\vee\bigwedge \neg Q'_i\vee\bigwedge M'_i\vee\bigwedge \neg N'_i)$ be  equivalent formulae in CQNF and DQNF, respectively, for $A$. Then 
	\begin{itemize}
		\item[{\rm (i)}]
		$\vdash \Gamma\Rightarrow \Delta,\Box A\,||\, {\cal H} $
		iff
		$\vdash	M_i,\Gamma\Rightarrow \Delta,  N_i\,||\, {\cal H}\,|\, P_i\Rightarrow Q_i$,
		for every $i=1,\ldots,k$.
		\item[{\rm (ii)}]
		$ \vdash\Diamond A,\Gamma\Rightarrow \Delta\,||\, {\cal H} $
		iff 
		$\vdash	M'_i,  \Gamma\Rightarrow \Delta, N'_i\,||\, {\cal H}\,|\,P'_i\Rightarrow Q'_i$,
	for every $i=1,\ldots,l$.
	\end{itemize}
\end{Lem}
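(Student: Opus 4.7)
The plan is to prove (i) and (ii) by the same scheme; I sketch (i), as (ii) is symmetric (with {\sf L$\Diamond$} and Lemma \ref{NFcut1}(ii) in place of {\sf R$\Box$} and Lemma \ref{NFcut1}(i)). The main ingredients are height-preserving invertibility of the propositional rules (Lemma \ref{invertibility of propositional rules }), Lemma \ref{NFcut1} together with Corollary \ref{left to right1}, the rule {\sf R$\Box$}, the rule {\sf Exch}, and the admissibility of the structural rules already established.

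For the ($\Leftarrow$) direction, starting from derivations of $M_i,\Gamma\Rightarrow\Delta,N_i\,||\,\mathcal{H}\,|\,P_i\Rightarrow Q_i$ for every $i$, I would first apply propositional invertibility to reduce $\Gamma$ and $\Delta$ to their atomic-plus-modal residues. For each branch $s$ of this reduction, with residues $M'_s,P'_s$ on the left and $Q'_s,N'_s$ on the right, the sequents $M_i,M'_s,P'_s\Rightarrow Q'_s,N'_s,N_i\,||\,\mathcal{H}\,|\,P_i\Rightarrow Q_i$ are derivable. A single application of {\sf Exch} swaps the root atomic block $(P'_s,Q'_s)$ with the crown component $P_i\Rightarrow Q_i$, yielding $M_i,P_i,M'_s\Rightarrow N'_s,Q_i,N_i\,||\,\mathcal{H}\,|\,P'_s\Rightarrow Q'_s$ for each $i$. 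Lemma \ref{NFcut1}(i), read right-to-left, now pools these $k$ sequents into $M'_s\Rightarrow N'_s,A\,||\,\mathcal{H}\,|\,P'_s\Rightarrow Q'_s$, and a single application of {\sf R$\Box$} turns this into $M'_s,P'_s\Rightarrow Q'_s,N'_s,\Box A\,||\,\mathcal{H}$. Replaying forward the propositional rules used in the reduction reassembles $\Gamma,\Delta$ and concludes the derivation of $\Gamma\Rightarrow\Delta,\Box A\,||\,\mathcal{H}$.

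For the ($\Rightarrow$) direction, I would proceed by induction on the height of the derivation of $\Gamma\Rightarrow\Delta,\Box A\,||\,\mathcal{H}$. Initial sequents remain initial after extending the root by $M_i,N_i$ and the crown by $P_i\Rightarrow Q_i$, because the displayed $\Box A$ cannot be the principal formula of any axiom. In the inductive step, the only substantive case is when the last rule is {\sf R$\Box$} with the displayed $\Box A$ as principal formula: then $\Gamma=M,P$, $\Delta=Q,N$, the premise is $M\Rightarrow N,A\,||\,\mathcal{H}\,|\,P\Rightarrow Q$, and Corollary \ref{left to right1}(i) applied to this premise yields $M_i,P_i,M\Rightarrow N,Q_i,N_i\,||\,\mathcal{H}\,|\,P\Rightarrow Q$ for every $i$, which one application of {\sf Exch} converts into the desired $M_i,M,P\Rightarrow Q,N,N_i\,||\,\mathcal{H}\,|\,P_i\Rightarrow Q_i$. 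All remaining cases — propositional rules acting in $\Gamma$ or $\Delta$, and the modal/structural rules {\sf L$\Diamond$}, {\sf R$\Box$} (with principal $\neq \Box A$), and {\sf Exch} — are handled uniformly by applying the induction hypothesis to the premise(s), where $\Box A$ is carried along as a passive formula, and then reapplying the same rule; the extra crown component $P_i\Rightarrow Q_i$ is inert under all of these rules.

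The principal obstacle is the coordination of {\sf Exch} with Lemma \ref{NFcut1}: in both directions, this pair of moves is what bridges the single formula $A$ inside the premise of {\sf R$\Box$} and the $i$-indexed data $M_i,N_i,P_i\Rightarrow Q_i$ in the decomposed conclusion. Once this bridge is set up, the remainder of the argument is a careful bookkeeping of multisets between root and crown, plus an application of the height-preserving invertibility of propositional rules to handle arbitrary $\Gamma,\Delta$.
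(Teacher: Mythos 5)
Your proof is correct and follows essentially the same route as the paper's: both directions hinge on decomposing $\Gamma,\Delta$ (and $A$) via Lemma \ref{NFcut1}/Corollary \ref{left to right1}, bridging root and crown with a single {\sf Exch}, and applying {\sf R$\Box$}, with the forward direction done by induction on height splitting on whether $\Box A$ is principal. Your explicit tracking of the branches produced when reducing $\Gamma,\Delta$ to atomic-plus-modal residues is slightly more careful than the paper's presentation, but it is the same argument.
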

\begin{proof}
	We prove the first assertion; the second is proved by a similar argument. For the direction from right to left observe that by Lemma \ref{NFcut1},  $ \vdash	M_i,\Gamma\Rightarrow \Delta,  N_i\,||\, {\cal H}\,|\,P_i\Rightarrow Q_i $ implies 
	$  \vdash	M_i,M,P\Rightarrow N,Q,  N_i\,||\, {\cal H}\,|\, P_i\Rightarrow Q_i$, where $ M,P$ and $N,Q $ are  multisets of
	atomic and modal formulae   corresponding to  formulae in $ \Gamma $ and $ \Delta$. Then by applying rule {\sf Exch}  we have:
	\begin{prooftree}
		\AXC{$ \vdash	M_i,M,P\Rightarrow N,Q,  N_i\,||\, {\cal H}\,|\,P_i\Rightarrow Q_i$}
		\RL{{\sf Exch}.}
		\UIC{$ \vdash	M_i,M,P_i\Rightarrow N,Q_i,  N_i\,||\, {\cal H} \,|\,P\Rightarrow Q$}
	\end{prooftree}
Therefore, $ \vdash	M_i,M,P_i\Rightarrow N,Q_i,  N_i\,||\, {\cal H}\,|\,P\Rightarrow Q $, for $ i=1,\ldots,k $, and then  by Lemma \ref{NFcut1} we have\\
 $\vdash M\Rightarrow N,A\,||\, {\cal H}\,|\,P\Rightarrow Q $. Hence, by applying rule {\sf R$ \Box $} we have:
\begin{prooftree}
	\AXC{$\vdash M\Rightarrow N,A\,||\, {\cal H}\,|\,P\Rightarrow Q$}
	\RL{\sf R$ \Box $}
	\UIC{$\vdash M,P\Rightarrow N,Q,\Box A\,||\, {\cal H} $}
	\end{prooftree}
Again by applying Lemma \ref{NFcut1} we have $\vdash \Gamma\Rightarrow  \Delta,\Box A \,||\, {\cal H}$.\\
	For the opposite direction, by induction on $ n $, we prove that if $ \vdash_n\Gamma\Rightarrow  \Delta,\Box A\,||\, {\cal H}  $, then\\
	 $\vdash_n	M_i,\Gamma\Rightarrow \Delta,  N_i\,||\, {\cal H}\,|\,P_i\Rightarrow Q_i$, for $ i=1,\ldots,k $. For  $ n=0 $, if $ \Gamma\Rightarrow  \Delta,\Box A\,||\, {\cal H}  $ is an initial sequent, then  $ \Box A  $ is not principal, and so $	M_i,\Gamma\Rightarrow \Delta,  N_i\,||\, {\cal H}\,|\,P_i\Rightarrow Q_i$ is an initial sequent too, for $  i=1,\ldots,k $. Assume height-preserving admissible up to height n, and let
	  $$ \vdash_{n+1} \Gamma\Rightarrow  \Delta,\Box A\,||\, {\cal H}   $$ If $ \Box A $ is not principal, we apply induction hypothesis to the premise(s) and then use the same rule to obtain deductions of $ 		M_i,\Gamma\Rightarrow \Delta,  N_i\,||\,{\cal H} \,|\, P_i\Rightarrow Q_i$, for $  i=1,\ldots,k $.  If on the other hand $ \Box A $ is principal, the derivation ends with
	\begin{prooftree}
		\AXC{$\vdash_n M\Rightarrow  N, A\,||\,{\cal H}\,|\,P\Rightarrow Q $}
		\RL{{\sf R$ \Box $},}
		\UIC{$\vdash_{n+1}M,P\Rightarrow  Q,N,\Box A\,||\,{\cal H} $}
	\end{prooftree}
	where $ \Gamma= M,P $ and $ \Delta= Q,N$. Therefore, by applying Corollary \ref{left to right1}, we have 
	$$\vdash_n M_i,P_i,M\Rightarrow  N, N_i,Q_i\,||\,{\cal H}\,|\,P\Rightarrow Q, $$
	for $  i=1,\ldots,k $. Hence, by applying rule {\sf Exch}  the conclusion is obtained as follows:
		\begin{prooftree}
		\AXC{$\vdash_n M_i,P_i,M\Rightarrow  N, N_i,Q_i\,||\,{\cal H} \,|\,P\Rightarrow Q$ }
		\RL{{\sf Exch}.}
		\UIC{$\vdash_{n+1} M_i,P,M\Rightarrow  N, N_i,Q\,||\,{\cal H}\,|\,P_i\Rightarrow Q_i $ }
	\end{prooftree}
\end{proof}
\begin{Cor}\label{left to right2}
	Both rules in the above lemma from left to right are  height-preserving admissible. In other words:
	\begin{itemize}
		\item[{\rm (i)}]
		If $\vdash_n \Gamma\Rightarrow \Delta,\Box A\,||\, {\cal H} $,
		then
		$\vdash_n	M_i,\Gamma\Rightarrow \Delta,  N_i\,||\, {\cal H}\,|\,P_i\Rightarrow Q_i$,
		for every $i=1,\ldots,k$.
		\item[{\rm (ii)}]
		If $ \vdash_n\Diamond A,\Gamma\Rightarrow \Delta\,||\, {\cal H} $,
		then 
		$\vdash_n	M'_i,  \Gamma\Rightarrow \Delta, N'_i\,||\, {\cal H}\,|\,P'_i\Rightarrow Q'_i$,
		for every $i=1,\ldots,l$.
	\end{itemize}
\end{Cor}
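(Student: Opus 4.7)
The plan is to observe that the left-to-right direction of Lemma~\ref{NFcut2} was already established by an induction on the height $n$ of the derivation, and that every step in that argument preserves the derivation height. Thus the corollary only requires me to make this height-tracking explicit. I will write out case (i); case (ii) is symmetric, interchanging {\sf R$\Box$} with {\sf L$\Diamond$} and CQNF with DQNF.

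I proceed by induction on $n$. For the base case, if $\vdash_0 \Gamma\Rightarrow\Delta,\Box A\,||\,{\cal H}$, the sequent is an initial sequent in which $\Box A$ cannot be the principal formula, so $M_i,\Gamma\Rightarrow\Delta,N_i\,||\,{\cal H}\,|\,P_i\Rightarrow Q_i$ is already initial (the axiomatic atoms sit in $\Gamma,\Delta$ unchanged). For the inductive step, split on whether $\Box A$ is principal in the last rule:

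If $\Box A$ is not principal, apply the induction hypothesis to the premise(s) of the last rule, which have height at most $n$, and then reapply the same rule. The only subtlety is to check that each of the rules {\sf L$\Diamond$}, {\sf R$\Box$}, and {\sf Exch} is compatible with the insertion of $M_i,N_i$ into the root and of $P_i\Rightarrow Q_i$ into the crown; this holds because $M_i$ and $N_i$ are multisets of modal formulas (so they remain in the root of the premises of the modal rules), the added crown component commutes with the other crown components, and the splitting of root into modal/atomic parts is unaffected. In each case we obtain the conclusion with height at most $n+1$.

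If $\Box A$ is principal, the last rule is {\sf R$\Box$}, so $\Gamma=M,P$ and $\Delta=Q,N$ and the premise is $\vdash_n M\Rightarrow N,A\,||\,{\cal H}\,|\,P\Rightarrow Q$. Applying Corollary~\ref{left to right1}(i), which is height-preserving because it is derived from the height-preserving invertibility of the propositional rules (Lemma~\ref{invertibility of propositional rules }), to the succedent occurrence of $A$, I obtain
\[
\vdash_n M_i,P_i,M\Rightarrow N,N_i,Q_i\,||\,{\cal H}\,|\,P\Rightarrow Q
\]
for every $i=1,\ldots,k$. One application of {\sf Exch} to swap the crown component $P\Rightarrow Q$ with the freshly formed root atoms $P_i,Q_i$ then yields
\[
\vdash_{n+1} M_i,P,M\Rightarrow N,N_i,Q\,||\,{\cal H}\,|\,P_i\Rightarrow Q_i,
\]
which is exactly $\vdash_{n+1} M_i,\Gamma\Rightarrow\Delta,N_i\,||\,{\cal H}\,|\,P_i\Rightarrow Q_i$, as required.

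The main obstacle is essentially bookkeeping: verifying that Corollary~\ref{left to right1} can be invoked without loss of height (which rests on the height-preserving invertibility of the propositional rules from Lemma~\ref{invertibility of propositional rules }), and that the single {\sf Exch} needed in the principal case costs only one unit of height, matching the unit of height originally spent on {\sf R$\Box$}. Once these two points are checked, the induction goes through exactly as in the proof of Lemma~\ref{NFcut2}, and clause (ii) follows by the symmetric argument using the principal case where $\Diamond A$ is introduced by {\sf L$\Diamond$} together with Corollary~\ref{left to right1}(ii).
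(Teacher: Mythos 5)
Your proposal is correct and follows essentially the same route as the paper: the authors prove the left-to-right direction of Lemma~\ref{NFcut2} by exactly this induction on $n$ (base case with $\Box A$ non-principal in an initial sequent, non-principal inductive case by IH plus the same rule, principal case via the height-preserving Corollary~\ref{left to right1} followed by one {\sf Exch}), so the corollary is immediate from that argument. Your write-up just makes the height bookkeeping explicit, which is precisely what the paper intends.
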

\begin{Lem}	\label{inversion for modal rules}
	The structural and modal rules are invertible in $ \cal{R}_{\sf S5}  $.
\end{Lem}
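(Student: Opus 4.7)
The structural rule here is {\sf Exch} and the modal rules are {\sf L$\Diamond$}, {\sf R$\Diamond$}, {\sf L$\Box$}, {\sf R$\Box$}. The plan is to dispatch these in three groups according to the tools needed: self-inversion for {\sf Exch}; an appeal to Lemma \ref{W2} for {\sf R$\Diamond$} and {\sf L$\Box$}; and the quasi-normal-form machinery of Lemma \ref{NFcut1} together with Corollary \ref{left to right2} for the substantive cases {\sf R$\Box$} and {\sf L$\Diamond$}.

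The first two groups are immediate. For {\sf Exch}, given the conclusion $M,P\Rightarrow Q,N\,||\,{\cal H}\,|\,P_i\Rightarrow Q_i\,|\,{\cal G}$, one more application of {\sf Exch} selecting the crown component $P_i\Rightarrow Q_i$ produces precisely the premise $M,P_i\Rightarrow Q_i,N\,||\,{\cal H}\,|\,P\Rightarrow Q\,|\,{\cal G}$. For {\sf R$\Diamond$} one needs $\Gamma\Rightarrow\Delta,\Diamond A,A\,||\,{\cal H}$ from $\Gamma\Rightarrow\Delta,\Diamond A\,||\,{\cal H}$, which is a single application of {\sf RW}; dually, {\sf L$\Box$} is inverted by one use of {\sf LW}.

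The substantive case is {\sf R$\Box$}, with {\sf L$\Diamond$} handled symmetrically. Assume a derivation of $M,P\Rightarrow Q,N,\Box A\,||\,{\cal H}$ and fix a CQNF $\bigwedge_{i=1}^{k}(\bigvee\neg P_i\vee\bigvee Q_i\vee\bigvee\neg M_i\vee\bigvee N_i)$ of $A$. By Corollary \ref{left to right2}(i), applied with $\Gamma = M,P$ and $\Delta = Q,N$, one obtains for every $i$ a derivation of
\[ M_i, M, P\Rightarrow Q, N, N_i\,||\,{\cal H}\,|\,P_i\Rightarrow Q_i. \]
A single application of {\sf Exch}, swapping the crown component $P_i\Rightarrow Q_i$ with the atomic multisets $P,Q$ of the root, turns this into
\[ M_i, M, P_i\Rightarrow Q_i, N, N_i\,||\,{\cal H}\,|\,P\Rightarrow Q. \]
Having derived these $k$ sequents, Lemma \ref{NFcut1}(i) in its right-to-left direction (with $\Gamma = M$, $\Delta = N$ and crown ${\cal H}\,|\,P\Rightarrow Q$) yields the premise $M\Rightarrow N, A\,||\,{\cal H}\,|\,P\Rightarrow Q$ of {\sf R$\Box$}. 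The inversion of {\sf L$\Diamond$} follows the same script with DQNF in place of CQNF and Corollary \ref{left to right2}(ii) and Lemma \ref{NFcut1}(ii) in place of their (i)-counterparts.

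The main obstacle is the bookkeeping of which multisets sit in the root and which in the crown: the normal-form decomposition of $A$ naturally produces atomic multisets $P_i, Q_i$ in the root (coming from propositional decomposition), while Corollary \ref{left to right2} pushes exactly such atomic multisets into the crown as a fresh component. The availability of {\sf Exch} as a primitive rule is what allows us to reconcile these two perspectives, so that Lemma \ref{NFcut1} can be reapplied at the end without leftover atoms in the wrong place. No fresh induction on derivation height is required at this stage, since the height-preserving work has already been absorbed into Corollary \ref{left to right2}.
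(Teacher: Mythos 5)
Your proposal is correct, and for the substantive cases it follows the paper's own route exactly: for {\sf R$\Box$} (and symmetrically {\sf L$\Diamond$}) the paper likewise decomposes $A$ into quasi-normal form, applies the forward direction of Lemma \ref{NFcut2} (of which Corollary \ref{left to right2} is just the height-preserving restatement) to get $M_i,M,P\Rightarrow Q,N,N_i\,||\,{\cal H}\,|\,P_i\Rightarrow Q_i$, swaps $P_i\Rightarrow Q_i$ with $P\Rightarrow Q$ by {\sf Exch}, and reassembles the premise $M\Rightarrow N,A\,||\,{\cal H}\,|\,P\Rightarrow Q$ via Lemma \ref{NFcut1}; the treatment of {\sf R$\Diamond$} and {\sf L$\Box$} by a single weakening is also identical. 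The one place you genuinely diverge is {\sf Exch}: the paper proves its invertibility by induction on the derivation of the conclusion (case-splitting on the last rule, which yields a height-preserving, in fact height-non-increasing, inversion), whereas you simply observe that {\sf Exch} is self-inverse and apply it once more to the conclusion. Your argument is shorter and perfectly adequate for the lemma as stated, since it only claims plain invertibility; the cost is that it increases the derivation height by one, so it would not suffice if a height-preserving version were needed downstream (it is not, as far as the cut-admissibility proof uses it). Both arguments are sound.
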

\begin{proof}
	Rules {\sf L$ \Box $}	and {\sf R$ \Diamond $} have repetition of the principle formula in the premises; so that we can obtain a derivation of premises of these rules by weakening their conclusions.
	
 We prove that the rules  {\sf R$ \Box $} and {\sf Exch} are invertible; the rule {\sf L$ \Diamond $} is treated similarly.\\
  Let  $\vdash  M,P\Rightarrow Q,N,\Box A\,||\, {\cal H}$, and let $\bigwedge_{i=1}^n(\bigvee P_i\vee\bigvee \neg Q_i\vee\bigvee M_i\vee\bigvee \neg N_i)$ be an equivalent formula in CQNF for $ A $. Then, by applying Lemma \ref{NFcut2} we have
   $$\vdash M_i,M,P\Rightarrow Q, N,N_i\,||\, {\cal H}\,|\,P_i\Rightarrow Q_i,  $$ for $ i=1,\ldots,n $. Thus, by applying rule {\sf Exch} we have
	\begin{prooftree}
		\AXC{$M_i,M,P\Rightarrow Q, N,N_i\,||\, {\cal H}\,|\,P_i\Rightarrow Q_i $}
		\RL{\sf Exch}
		\UIC{$M_i,M,P_i\Rightarrow Q_i, N,N_i\,||\, {\cal H}\,|\,P\Rightarrow Q $}
		\end{prooftree}
	Therefore, $ \vdash M_i,M,P_i\Rightarrow Q_i, N,N_i\,||\, {\cal H}\,|\, P\Rightarrow Q$, for $ i=1,\ldots,n $, and then again by applying  Lemma \ref{NFcut1} we have $\vdash  M\Rightarrow  N,A\,||\, {\cal H}\,|\, P\Rightarrow Q  $.
	
	Finally, we prove that the rule {\sf Exch}
	is invertible. Let $ \D $ be a derivation  of 
	$$ M,P\Rightarrow Q,N\,||\,{\cal H}\,|\, P_i\Rightarrow Q_i\,|\,{\cal G}. $$ If  the last rule applied in $ \D $ is propositional  rules or {\sf Exch}, where the rule {\sf Exch} does not apply to $ P_i \Rightarrow Q_i  $, apply induction hypothesis to the premise and then apply the last rule with the same principal formula to obtain deduction of $M,P_i\Rightarrow Q_i,N\,||\,{\cal H}\,|\, P\Rightarrow Q\,|\,{\cal G} $. Therefore, let the last rule be {\sf R$ \Box $} as follows:
		\begin{prooftree}
		\AXC{$M\Rightarrow N', A\,||\,{\cal H}\,|\, P_i\Rightarrow Q_i\,|\,{\cal G}\,|\,P\Rightarrow Q $}
		\RL{{\sf R$ \Box $},}
		\UIC{$ M,P\Rightarrow Q,N',\Box A\,||\,{\cal H}\,|\, P_i\Rightarrow Q_i\,|\,{\cal G} $}
	\end{prooftree}
	where $ N=N',\Box A $,  we have
	\begin{prooftree}
		\AXC{$M\Rightarrow N', A\,||\,{\cal H}\,|\, P_i\Rightarrow Q_i\,|\,{\cal G}\,|\,P\Rightarrow Q $}
		\RL{{\sf R$ \Box $}.}
		\UIC{$ M,P_i\Rightarrow Q_i,N',\Box A\,||\,{\cal H}\,|\,{\cal G}\,|\,P\Rightarrow Q $ }
	\end{prooftree}
If the last rule is  {\sf L$ \Diamond $},  the proof is similarly. If the last rule is {\sf Exch} which is applied to $ P_i \Rightarrow Q_i  $,   the conclusion is obtained.
	\end{proof}
\subsection{Admissibility of  contraction }
In order to prove the admissibility of the contraction  rule, we need  the following lemma.
\begin{Lem}\label{*}
The following rules are admissible.
	\begin{center}
		\AXC{$ \Diamond A,\Gamma\Rightarrow \Delta\,||\,{\cal H} $}
		\LL{{\rm (\rom{1})}}
		\UIC{$ A,\Gamma\Rightarrow  \Delta\,||\,{\cal H} $}
		\DP
		$ \qquad $
		\AXC{$ \Gamma\Rightarrow \Delta,\Box A\,||\,{\cal H} $}
		\LL{\rm (\rom{2})}
		\UIC{$\Gamma\Rightarrow \Delta, A \,||\,{\cal H}$}
		\DP
	\end{center}
\end{Lem}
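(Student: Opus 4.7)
The plan is to reduce both parts to the quasi-normal-form inversion lemmas (Lemmas~\ref{NFcut1} and~\ref{NFcut2}) together with the admissibility of $\mathsf{Merge}$ (Lemma~\ref{Merge}). The essential idea is that Lemma~\ref{NFcut2} peels off an outer $\Diamond$ or $\Box$ by pushing the atomic quasi-literals of $A$ into a \emph{fresh crown component}, whereas Lemma~\ref{NFcut1} builds $A$ back up from sequents in which those same atomic parts live in the \emph{root}; $\mathsf{Merge}$ is precisely the bridge between these two presentations.

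For part~(\rom{1}), I would fix an equivalent DQNF $\bigvee_{i=1}^{l}(\bigwedge P'_i\wedge\bigwedge\neg Q'_i\wedge\bigwedge M'_i\wedge\bigwedge\neg N'_i)$ of $A$. From the hypothesis $\vdash\Diamond A,\Gamma\Rightarrow\Delta\,||\,\mathcal{H}$, Lemma~\ref{NFcut2}(\rom{2}) supplies, for each $i=1,\ldots,l$, a derivation of
\[
M'_i,\Gamma\Rightarrow\Delta, N'_i\,||\,\mathcal{H}\,|\,P'_i\Rightarrow Q'_i.
\]
A single application of $\mathsf{Merge}$ to the component $P'_i\Rightarrow Q'_i$ then yields $M'_i,P'_i,\Gamma\Rightarrow Q'_i,\Delta,N'_i\,||\,\mathcal{H}$ for every $i$. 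Since these are exactly the premises required by Lemma~\ref{NFcut1}(\rom{2}) to reconstruct $A$ on the left of the root, I can conclude $\vdash A,\Gamma\Rightarrow\Delta\,||\,\mathcal{H}$.

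Part~(\rom{2}) is entirely symmetric. Starting from $\vdash\Gamma\Rightarrow\Delta,\Box A\,||\,\mathcal{H}$, I would pass to an equivalent CQNF $\bigwedge_{i=1}^{k}(\bigvee\neg P_i\vee\bigvee Q_i\vee\bigvee\neg M_i\vee\bigvee N_i)$ of $A$, invoke Lemma~\ref{NFcut2}(\rom{1}) to obtain $\vdash M_i,\Gamma\Rightarrow\Delta,N_i\,||\,\mathcal{H}\,|\,P_i\Rightarrow Q_i$ for each $i$, merge the component $P_i\Rightarrow Q_i$ into the root to produce $\vdash M_i,P_i,\Gamma\Rightarrow Q_i,\Delta,N_i\,||\,\mathcal{H}$, and finally apply Lemma~\ref{NFcut1}(\rom{1}) to reassemble $A$ on the right and deduce $\vdash\Gamma\Rightarrow\Delta,A\,||\,\mathcal{H}$.

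I do not foresee a genuine obstacle here: once one matches up the enumerations of the CQNF/DQNF of $A$ used by the two lemmas, the argument is purely bookkeeping. The only step where one must be slightly careful is the use of $\mathsf{Merge}$: it is applied component-by-component (one invocation per $i$), and one must verify that the multisets $M'_i$, $N'_i$ (respectively $M_i$, $N_i$) remaining in the root are purely modal, so that no side-condition of Lemma~\ref{Merge} is silently violated\,—\,but this is guaranteed by the structure of the quasi-phrases/quasi-clauses extracted from the DQNF/CQNF.
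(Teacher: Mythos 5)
Your proposal is correct and follows essentially the same route as the paper's own proof: apply Lemma~\ref{NFcut2} to peel off the outer modality into a fresh crown component, merge that component into the root via Lemma~\ref{Merge}, and reassemble $A$ with Lemma~\ref{NFcut1}. The paper writes out only part~(\rom{1}) and declares part~(\rom{2}) symmetric, exactly as you do in reverse.
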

\begin{proof}
	We only consider the part $ {\rm (\rom{1})} $; the other is proved similarly. Let $ \vdash \Diamond A,\Gamma\Rightarrow \Delta\,||\,{\cal H} $,  and  let $\bigvee_{i=1}^{k}(\bigwedge P'_i\vee\bigwedge \neg Q'_i\vee\bigwedge M'_i\vee\bigwedge \neg N'_i)$ be an equivalent formula in DQNF for $ A $. Then we have  
\begin{prooftree}
	\AXC{$\vdash \Diamond A,\Gamma\Rightarrow \Delta\,||\,{\cal H} $}
	\RL{Lemma \ref{NFcut2}}
	\UIC{$\vdash M'_i,\Gamma\Rightarrow \Delta, N'_i\,||\,{\cal H}\,|\, P'_i\Rightarrow Q'_i$}
	\RL{\sf Merge}
	\UIC{$\vdash M'_i,\Gamma,P'_i\Rightarrow Q'_i,\Delta, N'_i \,||\,{\cal H}$}
	\RL{Lemma  \ref{NFcut1}}
	\UIC{$ \vdash A,\Gamma\Rightarrow \Delta \,||\,{\cal H}$}
	\end{prooftree}	
\end{proof}
\begin{Cor}\label{**}
	The following rule is  admissible.
	\begin{prooftree}
		\AXC{$ \Diamond A,\Gamma\Rightarrow \Delta,\Box B\,||\,{\cal H} $}
		\UIC{$ A,\Gamma\Rightarrow \Delta,B\,||\,{\cal H} $}
	\end{prooftree}
\end{Cor}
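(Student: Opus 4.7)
The plan is to obtain this corollary by a direct two-step application of the previous lemma, since Lemma \ref{*} already provides both of the reductions we need, one for the diamond on the left and one for the box on the right.

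First I would start from the derivable rooted hypersequent $\Diamond A,\Gamma\Rightarrow\Delta,\Box B\,||\,{\cal H}$ and regard its succedent as a single multiset $\Delta'=\Delta,\Box B$. Applying Lemma \ref{*}(\rom{1}) with this grouping removes the $\Diamond$ on the left and yields $A,\Gamma\Rightarrow\Delta,\Box B\,||\,{\cal H}$. Next I would regroup the antecedent as $\Gamma'=A,\Gamma$ and apply Lemma \ref{*}(\rom{2}) to $\Gamma'\Rightarrow\Delta,\Box B\,||\,{\cal H}$, which strips off the $\Box$ on the right and delivers $A,\Gamma\Rightarrow\Delta,B\,||\,{\cal H}$, as required.

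There is essentially no obstacle: both parts of Lemma \ref{*} are stated for arbitrary multisets $\Gamma$, $\Delta$ and arbitrary crown $\cal H$, so there is no restriction preventing us from instantiating them with a box formula present in the succedent or an extra formula in the antecedent. The only point worth noting in the write-up is that the two applications commute in the sense that the intermediate sequent is well-formed as input to the second application, which is immediate from the shape of the rules involved. Consequently the proof is just the composition Lemma \ref{*}(\rom{1}) followed by Lemma \ref{*}(\rom{2}).
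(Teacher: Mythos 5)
Your proposal is correct and matches the paper's intent: the corollary is stated immediately after Lemma \ref{*} with no separate proof, precisely because it is the composition of part (\rom{1}) (instantiated with succedent $\Delta,\Box B$) followed by part (\rom{2}) (instantiated with antecedent $A,\Gamma$), exactly as you describe. No gap.
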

 We now  prove   the admissibility of  rules left rooted contraction {\sf LC}, right rooted contraction {\sf RC}, left crown contraction $ {\sf LC}^{\sf c} $, right crown contraction $ {\sf RC}^{\sf c} $ and finally, external contraction {\sf EC},  which are required for the proof of the  admissibility of  cut rule.  First we consider the rules {\sf LC} and {\sf RC} for atomic formula, these rules and rules $ {\sf LC}^{\sf c} $ and $ {\sf RC}^{\sf c} $  are proved simultaneously.
\begin{Lem} \label{atomic contraction}
	The following rules are height-preserving admissible, 
	\[\begin{array}{cc}
	\AXC{$ p,p,\Gamma\Rightarrow \Delta\,||\,{\cal H}  $}
	\RL{\sf LC}
	\UIC{$ p,\Gamma\Rightarrow \Delta\,||\,{\cal H} $}
	\DP
	&
	\AXC{$ \Gamma\Rightarrow \Delta,q,q \,||\,{\cal H} $}
	\RL{\sf RC,}
	\UIC{$ \Gamma\Rightarrow \Delta,q\,||\,{\cal H} $}
	\DP
		\\[0.5cm]		
\AXC{$ \Gamma\Rightarrow\Delta\,||\,{\cal H}\,|\, p,p,P_i\Rightarrow Q_i\,|\,{\cal G} $}
\RL{$ {\sf LC} ^{\sf c}$}
\UIC{$ \Gamma\Rightarrow\Delta\,||\,{\cal H}\,|\, p,P_i\Rightarrow Q_i\,|\,{\cal G} $}
\DP
&
\AXC{$ \Gamma\Rightarrow\Delta\,||\,{\cal H}\,|\, P_i\Rightarrow Q_i,q,q\,|\,{\cal G} $}
\RL{$ {\sf RC}^{\sf c} $}
\UIC{$ \Gamma\Rightarrow\Delta\,||\,{\cal H}\,|\, P_i\Rightarrow Q_i,q\,|\,{\cal G} $}
\DP
\end{array}	\]
where $ p $ and $ q $ are atomic formulae.
\end{Lem}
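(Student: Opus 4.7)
The plan is to prove all four rules simultaneously by induction on the height $n$ of the derivation of the premise, with a case analysis on the last rule applied. For $n = 0$ the premise is an initial sequent ({\sf Ax}, {\sf L}$\bot$, or {\sf R}$\top$); in each case the contracted sequent is still an initial sequent of the same form, since removing one copy of an atomic formula from a multiset that already contained two copies still leaves the witness required by the axiom.

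For the inductive step, if the last rule applied is any propositional rule, or one of the modal rules {\sf L}$\Box$ or {\sf R}$\Diamond$, then the contracted atomic formula cannot be principal (the principal formulas of these rules are non-atomic), so the two copies of the contracted atom are inherited by every premise. I apply the induction hypothesis to each premise, contracting one copy at height $\le n-1$, and then re-apply the same rule, producing a derivation of the contracted conclusion at height $\le n$.

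The interesting cases are the modal rules {\sf L}$\Diamond$ and {\sf R}$\Box$ and the structural rule {\sf Exch}, because these are exactly the rules that transport atomic multisets between the root and the crown. For instance, when the last rule is {\sf L}$\Diamond$ with conclusion $\Diamond A, M, P \Rightarrow Q, N\,||\,{\cal H}$ and I want to contract $p,p$ in the antecedent of the root (the case of {\sf LC}), both copies of $p$ must lie in $P$, since $M$ is modal and $\Diamond A$ is not atomic. In the premise $A, M \Rightarrow N\,||\,{\cal H}\,|\,P\Rightarrow Q$ these two copies now reside in the new crown component, so I invoke the induction hypothesis for ${\sf LC}^{\sf c}$ on the premise and then re-apply {\sf L}$\Diamond$. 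The case of {\sf R}$\Box$ is symmetric using ${\sf RC}^{\sf c}$. For {\sf Exch} there are two subcases for, say, ${\sf LC}^{\sf c}$: if the rule happens to swap the very component $p,p,P_i \Rightarrow Q_i$ into the root of its premise, then in the premise both $p$'s sit in the root, and I invoke the induction hypothesis for {\sf LC} before re-applying {\sf Exch}; otherwise the swapped component lies elsewhere and a direct appeal to ${\sf LC}^{\sf c}$ on the premise followed by {\sf Exch} suffices. The symmetric arguments handle {\sf LC} with last rule {\sf Exch}, as well as the right-hand analogues.

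The main obstacle, and the reason that the four rules must be established by a single simultaneous induction, is exactly this cross-dependence between root and crown contractions forced by {\sf L}$\Diamond$, {\sf R}$\Box$, and {\sf Exch}: to contract an atom in the root one may first need to contract it in a crown component of the premise, and conversely. Bundling {\sf LC}, {\sf RC}, ${\sf LC}^{\sf c}$, and ${\sf RC}^{\sf c}$ into one induction is what lets these cross-references close while preserving the height of the derivation.
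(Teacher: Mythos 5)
Your proposal is correct and follows essentially the same route as the paper: a simultaneous height-preserving induction on all four rules, with the crucial cross-appeals between {\sf LC}/{\sf RC} and ${\sf LC}^{\sf c}$/${\sf RC}^{\sf c}$ arising exactly at {\sf L}$\Diamond$, {\sf R}$\Box$, and {\sf Exch}, where atomic multisets migrate between root and crown. The paper's own proof treats the same representative cases (e.g.\ {\sf Exch} acting on the contracted component, and {\sf L}$\Diamond$ moving $p,p$ into a crown component) in exactly the way you describe.
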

\begin{proof}
	{\rm
		All rules are    proved simultaneously by induction on  the height of  derivation of the premises. In all cases, if the premise is an initial sequent, then the conclusion is an initial sequent too. We  consider some cases; the other cases are proved by a similar argument.
		
For  the rule $ {\sf LC} ^{\sf c}$, let $ \Gamma=M,P$ and $ \Delta=Q,N$,  and let  the last rule  be 
\begin{prooftree}
	\AXC{$ \D $}
	\noLine
	\UIC{$\vdash_n M,p,p,P_i\Rightarrow Q_i,N\,||\,{\cal H}\,|\, P\Rightarrow Q\,|\,{\cal G} $}
	\RL{{\sf Exch},}
	\UIC{$\vdash_{n+1} M,P\Rightarrow Q,N\,||\,{\cal H}\,|\, p,p,P_i\Rightarrow Q_i\,|\,{\cal G} $}
\end{prooftree}
 then we have
\begin{prooftree}
	\AXC{$ \D $}
	\noLine
	\UIC{$ \vdash_n M,p,p,P_i\Rightarrow Q_i,N\,||\,{\cal H}\,|\, P\Rightarrow Q\,|\,{\cal G} $}
	\RL{IH ({\sf LC})}
	\UIC{$\vdash_n M,p,P_i\Rightarrow Q_i,N\,||\,{\cal H}\,|\, P\Rightarrow Q\,|\,{\cal G}  $}
	\RL{{\sf Exch}.}
	\UIC{$\vdash_{n+1}M,P\Rightarrow Q,N\,||\,{\cal H}\,|\, p,P_i\Rightarrow Q_i\,|\,{\cal G} $}
\end{prooftree} 
For the other last rules $\sf  R $, use induction hypothesis  on the premise, and then apply the rule $ \sf R $.\\

For  the rule $\sf LC $, let  $ \Gamma=M,P$ and $ \Delta=Q,N$, and let the last rule be as follows
\begin{prooftree}
	\AXC{$ \D $}
	\noLine
	\UIC{$\vdash_n M,P_i\Rightarrow Q_i,N\,||\, {\cal H}\,|\,p,p,P \Rightarrow Q\,|\, {\cal G}$}
	\RL{{\sf Exch},}
	\UIC{$\vdash_{n+1} p,p,M,P\Rightarrow Q,N\,||\, {\cal H}\,|\, P_i\Rightarrow Q_i\,|\, {\cal G} $}
\end{prooftree}
then we have
\begin{prooftree}
	\AXC{$ \D $}
	\noLine
	\UIC{$\vdash_n M,P_i\Rightarrow Q_i,N\,||\, {\cal H}\,|\,p,p,P \Rightarrow Q\,|\, {\cal G}$}
	\RL{IH ($ {\sf LC} ^{\sf c}$)}
	\UIC{$\vdash_n M,P_i\Rightarrow Q_i,N\,||\, {\cal H}\,|\,p,P \Rightarrow Q\,|\, {\cal G}$}
	\RL{{\sf Exch}.}
	\UIC{$\vdash_{n+1} M,p,P\Rightarrow Q,N\,||\, {\cal H}\,|\,P_i \Rightarrow Q_i\,|\, {\cal G} $}
\end{prooftree}
Let the last rule be {\sf L$\Diamond $} as:
\begin{prooftree}
	\AXC{$ \D $}
	\noLine
	\UIC{$\vdash_n  B,M\Rightarrow N\,||\, {\cal H}\,|\,p,p,P\Rightarrow Q$}
	\RL{{\sf L$\Diamond $},}
	\UIC{$\vdash_{n+1} p,p,\Diamond B,M,P\Rightarrow Q,N\,||\, {\cal H} $}
\end{prooftree}
where $ \Gamma=\Diamond B,M,P$ and $ \Delta=Q,N$. Then we have
\begin{prooftree}
	\AXC{$ \D $}
	\noLine
	\UIC{$\vdash_n  B,M\Rightarrow N\,||\, {\cal H}\,|\,p,p,P\Rightarrow Q $}
	\RL{IH ($ {\sf LC} ^{\sf c}$)}
	\UIC{$\vdash_n B,M\Rightarrow N\,||\,{\cal H}\,|\,p,P\Rightarrow Q  $}
	\RL{{\sf L$\Diamond $},}
	\UIC{$\vdash_{n+1} p,P,\Diamond B,M\Rightarrow N,Q\,||\, {\cal H}  $}
\end{prooftree}
The rule {\sf R$ \Box $} is treated similarly; for the other rules use induction hypothesis and then use the same rule.
\begin{Lem}
	The rules of contraction, 
		\begin{center}
		\AXC{$ A,A,\Gamma\Rightarrow \Delta\,||\, {\cal H} $}
		\RL{\sf LC}
		\UIC{$ A,\Gamma\Rightarrow \Delta\,||\, {\cal H} $}
		\DP
		$\qquad$
		\AXC{$ \Gamma\Rightarrow \Delta,A,A\,||\, {\cal H} $}
		\RL{{\sf RC},}
		\UIC{$ \Gamma\Rightarrow \Delta,A\,||\, {\cal H} $}
		\DP
	\end{center}
are  admissible.
\end{Lem}
\begin{proof}
		Both rules are proved simultaneously  by induction on the complexity of $ A $ with subinduction on the height of the derivations. The lemma  holds for atomic  formula $ A $, by Lemma \ref{atomic contraction}. For the other cases, if $ A $ is not principal in the last rule (either modal or propositional), apply inductive
		hypothesis to the premises and then apply the last rule. Here we only consider the rule $ {\sf LC} $; the admissibility of the rule {\sf RC} is proved similarly.  Suppose   
		$ \vdash A,A,\Gamma\Rightarrow \Delta\,||\, {\cal H} $, we consider some cases, in which $ A $ is principal formula in the last rule:
	\end{proof}

Case 1. $ A= \Diamond B $:
		\begin{prooftree}
			\AXC{$ \D $}
			\noLine
			\UIC{$\vdash B,\Diamond B,M\Rightarrow N\,||\, {\cal H}\,|\,P\Rightarrow Q $}
			\RL{{\sf L$\Diamond $},}
			\UIC{$\vdash  \Diamond B,\Diamond B,M,P\Rightarrow N,Q\,||\, {\cal H} $}
		\end{prooftree}
		 where $ \Gamma=M,P$ and $ \Delta=N,Q$. Then we have
		\begin{prooftree}
			\AXC{$ \D $}
			\noLine
			\UIC{$\vdash B,\Diamond B,M\Rightarrow N\,||\, {\cal H}\,|\,P\Rightarrow Q $}
			\RL{ Lemma \ref{*} }
			\UIC{$\vdash B, B,M\Rightarrow N\,||\, {\cal H}\,|\,P\Rightarrow Q $}
			\RL{ IH }
			\UIC{$\vdash  B,M\Rightarrow N\,||\, {\cal H}\,|\,P\Rightarrow Q$}
			\RL{{\sf L$\Diamond $}.}
		\UIC{$\vdash  \Diamond B,M,P\Rightarrow N,Q\,||\, {\cal H} $}
		\end{prooftree}
	
Case 2.  $ A=\Box B$:
\begin{prooftree}
	\AXC{$ \D $}
	\noLine
	\UIC{$\vdash   B,\Box B,\Box B,\Gamma\Rightarrow \Delta\,||\, {\cal H} $}
	\RL{{\sf L$\Box $},}
	\UIC{$\vdash \Box B,\Box B,\Gamma\Rightarrow \Delta\,||\, {\cal H} $}
\end{prooftree}
then
\begin{prooftree}
	\AXC{$ \D $}
	\noLine
	\UIC{$\vdash B,\Box B,\Box B,\Gamma\Rightarrow \Delta\,||\, {\cal H} $}
	\RL{IH}
	\UIC{$\vdash  B,\Box B,\Gamma\Rightarrow \Delta\,||\, {\cal H} $}
	\RL{{\sf L$\Box $}.}
	\UIC{$\vdash \Box B,\Gamma\Rightarrow \Delta\,||\, {\cal H} $}
\end{prooftree}

Case 3. Let $ A=B\rightarrow C $ and let the last rule as	
\begin{prooftree}
	\AXC{$ \D_1 $}
	\noLine
	\UIC{$\vdash  B\rightarrow C,\Gamma\Rightarrow \Delta,B\,||\, {\cal H} $}
	\AXC{$ \D_2 $}
	\noLine
	\UIC{$\vdash  C,B\rightarrow C,\Gamma\Rightarrow \Delta\,||\, {\cal H} $}
	\RL{{\sf L$\rightarrow $},}
	\BIC{$\vdash  B\rightarrow C,B\rightarrow C,\Gamma\Rightarrow \Delta\,||\, {\cal H} $}
\end{prooftree}		
By applying  the invertibility of the rule {\sf L$ \rightarrow $}, see Lemma \ref{invertibility of propositional rules },  to the first premise, we get 
$$ \vdash  \Gamma\Rightarrow \Delta,B,B\,||\, {\cal H} $$ and
applying to the second premise, we get $\vdash  C,C,\Gamma\Rightarrow \Delta\,||\, {\cal H} $. We then use the induction hypothesis and obtain $\vdash  \Gamma\Rightarrow \Delta,B\,||\, {\cal H} $  and $\vdash  C,\Gamma\Rightarrow \Delta\,||\, {\cal H} $. Thus by {\sf L$\rightarrow $}, we get $\vdash  B\rightarrow C,\Gamma\Rightarrow \Delta \,||\, {\cal H}$.\\
	}
\end{proof}
\begin{Lem}
	The rule of external  contraction,
		\begin{center}
		\AXC{$  \Gamma\Rightarrow \Delta\,||\,{\cal H}\,|\, P_i\Rightarrow Q_i\,|\, P_i\Rightarrow Q_i\,|\, {\cal G}  $}
		\RL{$ \sf EC $}
		\UIC{$ \Gamma\Rightarrow \Delta\,||\,{\cal H}\,|\, P_i\Rightarrow Q_i\,|\, {\cal G} $}
		\DP
	\end{center}
	 is height-preserving admissible.
	\end{Lem}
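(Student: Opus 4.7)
\begin{proof}
The plan is to reduce external contraction to rules whose height-preserving admissibility has already been established, namely $ {\sf Merge}^{\sf c} $ (Lemma \ref{Merge}) and the atomic crown contractions $ {\sf LC}^{\sf c} $ and $ {\sf RC}^{\sf c} $ (Lemma \ref{atomic contraction}).

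Given a derivation of $ \Gamma\Rightarrow \Delta \,||\,{\cal H}\,|\, P_i\Rightarrow Q_i\,|\, P_i\Rightarrow Q_i\,|\, {\cal G} $ of height at most $ n $, I first apply the height-preserving admissible rule $ {\sf Merge}^{\sf c} $ to combine the two copies of $ P_i\Rightarrow Q_i $ into one component, obtaining a derivation of $ \Gamma\Rightarrow \Delta \,||\,{\cal H}\,|\, P_i,P_i\Rightarrow Q_i,Q_i\,|\, {\cal G} $ of height at most $ n $. Since $ P_i $ and $ Q_i $ are finite multisets of atomic formulas, say $ P_i = p_1,\ldots,p_k $ and $ Q_i = q_1,\ldots,q_l $, the distinguished crown component is
\[
p_1,p_1,\ldots,p_k,p_k\Rightarrow q_1,q_1,\ldots,q_l,q_l.
\]
I then apply rule $ {\sf LC}^{\sf c} $ once for each atom $ p_j $ and rule $ {\sf RC}^{\sf c} $ once for each atom $ q_j $. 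Each of these applications is height-preserving admissible, so after $ k+l $ such applications I obtain a derivation of $ \Gamma\Rightarrow \Delta \,||\,{\cal H}\,|\, P_i\Rightarrow Q_i\,|\, {\cal G} $ of height at most $ n $, as required.
\end{proof}

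The only thing one might worry about is that the atomic contractions handle a single pair of atoms at a time while $ P_i $ and $ Q_i $ may be nontrivial multisets, but since they are finite this is harmless. No induction on derivation height is needed for this lemma itself: the argument is a pure composition of previously-established height-preserving admissible rules.
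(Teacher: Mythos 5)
Your proof is correct and follows exactly the same route as the paper: first merge the two duplicate crown components via $ {\sf Merge}^{\sf c} $, then contract the duplicated atoms one at a time using $ {\sf LC}^{\sf c} $ and $ {\sf RC}^{\sf c} $, all of which are height-preserving admissible. Your remark that finitely many atomic contractions suffice is a minor elaboration the paper leaves implicit; otherwise the arguments coincide.
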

\begin{proof}
If the premise is derivable, then the conclusion is derived as follows
		\begin{prooftree}
		\AXC{$\vdash_n  \Gamma\Rightarrow \Delta\,||\,{\cal H}\,|\, P_i\Rightarrow Q_i\,|\, P_i\Rightarrow Q_i\,|\, {\cal G}  $}
		\RL{$ {\sf  Merge}^{\sf c} $}
		\UIC{$\vdash_n  \Gamma\Rightarrow \Delta\,||\,{\cal H}\,|\, P_i,P_i\Rightarrow Q_i,Q_i\,|\, {\cal G} $}
		\RL{$ {\sf RC}^{\sf c} $, $ {\sf LC} ^{\sf c}$}
		\UIC{$\vdash_n  \Gamma\Rightarrow \Delta\,||\,{\cal H}\,|\, P_i\Rightarrow Q_i\,|\, {\cal G} $}
	\end{prooftree}
this rule is height-preserving admissible because the rules $ {\sf  Merge}^{\sf c} $, $ {\sf RC}^{\sf c} $, and $ {\sf LC} ^{\sf c}$ are height-preserving admissible.
\end{proof}
\section{Admissibility of cut }\label{sec cut}
In this section, we prove the admissibility of  cut rule and  completeness theorem.

In   cut rule, 
\begin{prooftree}
	\AXC{$ \Gamma\Rightarrow \Delta,D\,||\, {\cal H} $}
	\AXC{$ D,\Gamma'\Rightarrow \Delta'\,||\, {\cal H'} $}
	\RL{\sf Cut,}
	\BIC{$ \Gamma,\Gamma'\Rightarrow \Delta,\Delta'\,||\, {\cal H}\,|\, {\cal H'} $}
	\end{prooftree}
the crown of the conclusion is the union  of  crowns of the premises.

If cut formula $ D $ is atomic, then the admissibility of the cut rule
is proved  simultaneously with the following rule which is called crown cut:
\begin{prooftree}
	\AXC{$ M,P\Rightarrow Q,N\,||\, {\cal H}\,|\, P_i\Rightarrow Q_i,p\,|\,{\cal G} $}
	\RL{$ {\sf Cut}^{\sf c} $}
	\AXC{$ M',P'\Rightarrow Q',N'\,||\, {\cal H}'\,|\,p, P'_i\Rightarrow Q'_i\,|\,{\cal G}' $}
	\BIC{$ M,P_i,M',P'_i\Rightarrow Q_i,N,Q'_i,N'\,||\, {\cal H}\,|\, P\Rightarrow Q\,|\,{\cal G}\,|\, {\cal H}'\,|\, P'\Rightarrow Q'\,|\,{\cal G}' $}
\end{prooftree}
 \textbf{Note}: The following exchanges between root and crown parts should be noted in the crown cut rule $ {\sf Cut}^{\sf c} $.
 \begin{tasks}(4)
 	\task[$ \bullet $] $ P $ and $ P_i $
 	\task [$ \bullet $] $ Q $ and  $ Q_i $
 	\task [$ \bullet $] $P' $ and $ P'_i $
 	\task [$ \bullet $] $ Q' $ and  $ Q'_i $
 \end{tasks}
	\begin{Lem}\label{atomic cut}
		The following rules are admissible,	where $ p$ is an atomic formula.
		\begin{prooftree}
			\AXC{$ \Gamma\Rightarrow \Delta,p\,||\, {\cal H} $}
			\AXC{$ p,\Gamma'\Rightarrow \Delta'\,||\, {\cal H'} $}
			\RL{\sf Cut,}
			\BIC{$ \Gamma,\Gamma'\Rightarrow \Delta,\Delta'\,||\, {\cal H}\,|\, {\cal H'} $}
		\end{prooftree}
	\begin{prooftree}
		\AXC{$ M,P\Rightarrow Q,N\,||\, {\cal H}\,|\, P_i\Rightarrow Q_i,p\,|\,{\cal G} $}
		\RL{$ {\sf Cut}^{\sf c} $}
		\AXC{$ M',P'\Rightarrow Q',N'\,||\, {\cal H}'\,|\,p, P'_i\Rightarrow Q'_i\,|\,{\cal G}' $}
		\BIC{$ M,P_i,M',P'_i\Rightarrow Q_i,N,Q'_i,N'\,||\, {\cal H}\,|\, P\Rightarrow Q\,|\,{\cal G}\,|\, {\cal H}'\,|\, P'\Rightarrow Q'\,|\,{\cal G}' $}
	\end{prooftree}
		\end{Lem}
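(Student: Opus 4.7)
The plan is to prove the admissibility of ${\sf Cut}$ and ${\sf Cut}^{\sf c}$ simultaneously, by induction on the sum $n_1+n_2$ of the heights of the derivations of the two premises. Since the cut formula $p$ is atomic, it is never principal in a propositional or modal introduction rule; it is principal only in an instance of ${\sf Ax}$, and its position between the root and the crown can change only as a side formula of one of the structural rules ${\sf Exch}$, ${\sf L}\Diamond$, or ${\sf R}\Box$.

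I would dispatch the base cases first. If $p$ is the principal atom of an ${\sf Ax}$ occurrence on a premise of ${\sf Cut}$, then $p$ occurs on both sides of that premise and the cut conclusion follows from the other premise by admissibility of weakening (Lemmas~\ref{external weakening}, \ref{modal and atomic weakening}, \ref{W2}). If a different atom is principal, or the premise is ${\sf L}\bot$ or ${\sf R}\top$, the conclusion is already an initial sequent; for ${\sf Cut}^{\sf c}$ this may require one application of ${\sf Exch}$ to bring the matching atom back into the root, which is permitted because the root of the ${\sf Cut}^{\sf c}$ conclusion has the required atomic-plus-modal shape.

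For the inductive step I would case-analyse the last rule of each premise. Whenever that rule is propositional, ${\sf L}\Box$, ${\sf R}\Diamond$, or an instance of ${\sf Exch}$/${\sf L}\Diamond$/${\sf R}\Box$ in which $p$ is unaffected, I would permute the cut upward in the standard way: apply the induction hypothesis to the premise(s) of that rule together with the other cut premise, and then reapply the rule. Height-preserving admissibility of ${\sf Merge}$, ${\sf Merge}^{\sf c}$ (Lemma~\ref{Merge}), and of weakening is invoked to align the shapes of intermediate sequents where needed. When the last rule of only one premise is a crown-moving structural rule while the other is not, one permutes through the non-moving side, so the crown-moving premise is left intact.

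The genuinely hard case is when the last rule of a cut premise is ${\sf Exch}$, ${\sf L}\Diamond$, or ${\sf R}\Box$ and it relocates $p$ between the root and the crown of the subderivation. Here the strategy is to invoke the mutually proved partner rule: when both premises of ${\sf Cut}$ end in such a rule, so that $p$ sits in a crown component of both subderivations, those subderivations have their roots in the shape demanded by ${\sf Cut}^{\sf c}$, and applying ${\sf Cut}^{\sf c}$ (via the mutual IH, at a smaller height sum) delivers the desired conclusion; the components that ${\sf Cut}^{\sf c}$ pushes into the crown are precisely those pulled out of the root by the two structural rules on the two sides, and the components it brings back to the root are precisely those that were previously pushed into the crown by those same rules. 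The symmetric situation for ${\sf Cut}^{\sf c}$, in which a structural rule would return $p$ to the root, is handled by appealing to ${\sf Cut}$. The main obstacle is the bookkeeping of this matching: one has to verify, rule by rule, that the crown/root components really line up so that the mutually invoked rule produces exactly the target conclusion, falling back on the admissibility of ${\sf Merge}$, ${\sf Merge}^{\sf c}$, and weakening to reconcile any residual mismatches.
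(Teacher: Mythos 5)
Your overall skeleton --- simultaneous induction on the sum of the heights of the two premises, mutual recursion between {\sf Cut} and $ {\sf Cut}^{\sf c} $, and permuting the cut past rules that do not disturb $p$ --- is exactly the paper's strategy, and your treatment of the base cases and of the {\sf Exch}/{\sf Exch} combinations matches the paper's Subcases 3.1/3.2. But there is a genuine gap in what you call the ``genuinely hard case''. Consider the instance of {\sf Cut} where the left premise $M,P\Rightarrow Q,N,\Box A,p\,||\,{\cal H}$ ends with {\sf R}$\Box$ (premise $M\Rightarrow N,A\,||\,{\cal H}\,|\,P\Rightarrow Q,p$) and the right premise $p,\Diamond B,M',P'\Rightarrow Q',N'\,||\,{\cal H}'$ ends with {\sf L}$\Diamond$ (premise $B,M'\Rightarrow N'\,||\,{\cal H}'\,|\,p,P'\Rightarrow Q'$). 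Here $p$ has indeed been pushed into a crown component on both sides, but you cannot apply $ {\sf Cut}^{\sf c} $ to the two subderivations: the rule $ {\sf Cut}^{\sf c} $ requires the roots of both of its premises to consist only of modal and atomic formulas, whereas the premises of {\sf R}$\Box$ and {\sf L}$\Diamond$ carry the \emph{arbitrary} formulas $A$ and $B$ in their roots. This is not a ``residual mismatch'' that {\sf Merge}, $ {\sf Merge}^{\sf c} $, or weakening can reconcile --- the sequents are simply outside the domain of $ {\sf Cut}^{\sf c} $.

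The missing idea is the quasi-normal-form machinery that the paper develops for precisely this purpose. One first replaces $A$ by an equivalent formula in CQNF (respectively $B$ by one in DQNF) and uses the height-preserving Corollary \ref{left to right1} to turn the premise of the modal rule into a family of sequents $M_i,P_i,M\Rightarrow N,Q_i,N_i\,||\,{\cal H}\,|\,P\Rightarrow Q,p$ whose roots \emph{are} purely modal and atomic; to each of these one applies $ {\sf Cut}^{\sf c} $ at lower cut-height, and finally Lemma \ref{NFcut2} reassembles $\Box A$ (or $\Diamond B$) in the conclusion. The same device is needed inside the admissibility proof of $ {\sf Cut}^{\sf c} $ itself, in the cases where a premise ends with {\sf L}$\Diamond$ or {\sf R}$\Box$. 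Without Lemmas \ref{NFcut1} and \ref{NFcut2} and their corollaries, the mutual recursion between {\sf Cut} and $ {\sf Cut}^{\sf c} $ only disposes of the {\sf Exch} cases and of those where one side's structural rule can be re-instantiated with an empty atomic context, so your proof as proposed does not go through.
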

	\begin{proof}
		 Both rules are proved  simultaneously  by   induction on the sum of heights of derivations of the two premises, which is called cut-height. First, we consider  the rule $ {\sf Cut} $. If both of the premises are  initial sequents, then the conclusion is an initial sequent too, and if only one of the premises is an initial sequent, then the conclusion is obtained by  weakening. \\
	If one of the last rules in the derivations of the premises is  not 
	{\sf L$\Diamond $}, 
	{\sf R$\Box $}, or {\sf Exch},
	then the cut rule can be transformed into  cut(s) with lower
	cut-height as usual. Thus we  consider  cases which the last rules are {\sf L$\Diamond $}, 
	{\sf R$\Box $}, and {\sf Exch}.\\
	\textbf{Case 1.  The left premise is derived by {\sf R$ \Box $}.} Let $\Gamma=M,P $ and  $\Delta=Q,N,\Box A $, and let $ \Box A $ be the principal formula. We have three subcases according to the last rule in the derivation of the right premise. \\
	\textbf{Subcase 1.1} The right premise is derived by {\sf L$ \Diamond $}. Let $\Gamma'=\Diamond B,M',P'$
	and $ \Delta '=Q',N'$,  and let the last rules be as follows
	\begin{prooftree}
		\AXC{$ \D $}
		\noLine
		\UIC{$ M\Rightarrow N,A\,||\,{\cal H}\,|\,P\Rightarrow Q,p  $}
		\RL{\sf R$\Box $}
		\UIC{$ M,P\Rightarrow Q,N,\Box A,p\,||\, {\cal H}$}
		\AXC{$ \D' $}
		\noLine
		\UIC{$  B,M'\Rightarrow N' \,||\, {\cal H}'\,|\,p,P'\Rightarrow Q' $}
		\RL{\sf L$\Diamond $}
		\UIC{$ p,\Diamond B,M',P'\Rightarrow Q',N'\,||\, {\cal H}' $}
		\RL{{\sf Cut}.}
		\BIC{$  M,P, \Diamond B,M',P'\Rightarrow Q,N,\Box A,Q',N'\,||\, {\cal H}\,|\,{\cal H}' $}
	\end{prooftree}
	For this cut, let
	$ \bigwedge_{i=1}^{k}(\bigvee\neg P_i\vee \bigvee Q_i\vee \bigvee\neg M_i\vee \bigvee N_i) $ 
	be an equivalent formula in CQNF of $ A $. Then, by Corollary \ref{left to right1}, which is height-preserving admissible, we get the following derivation from $ \D $ 
	\begin{prooftree}
		\AXC{$ \D_1 $}
		\noLine
		\UIC{$ M_i,P_i,M\Rightarrow N,Q_i, N_i \,||\, {\cal H}\,|\,P\Rightarrow Q,p,  $}
	\end{prooftree}
for every clause $ (\bigvee\neg P_i\vee \bigvee Q_i\vee \bigvee\neg M_i\vee \bigvee N_i) $.
	Then by applying crown cut   rule ${\sf Cut}^{\sf c} $, we get the following derivation for every clauses in CQNF of $ A $
	\begin{prooftree}
		\AXC{$ \D_1 $}
		\noLine
		\UIC{$ M_i,P_i,M\Rightarrow N,Q_i, N_i \,||\,{\cal H}\,|\,P\Rightarrow Q,p  $}
			\AXC{$ \D' $}
			\noLine
			\UIC{$   B,M'\Rightarrow N' \,||\, {\cal H}' \,|\,p,P'\Rightarrow Q' $}
			\RL{{\sf L$\Diamond $}}
			\UIC{$ \Diamond B,M'\Rightarrow N' \,||\, {\cal H}' \,|\,p,P'\Rightarrow Q' $}
		\RL{$ {\sf Cut}^{\sf c} $}
		\BIC{$ M_i,P,M,\Diamond B,M',P'\Rightarrow Q,N,N_i,Q',N'\,||\,{\cal H}\,|\,P_i\Rightarrow Q_i\,|\, {\cal H}'\,|\,P'_i\Rightarrow Q'_i $}
		\end{prooftree}
	and then  using  Lemma \ref{NFcut2}, the conclusion is obtained.\\
		\textbf{Subcase 1.2.} The right premise is derived by {\sf R$ \Box $}. Similar to  Case 1.1.\\
		\textbf{Subcase 1.3.} The right premise is derived by {\sf Exch}.  Let $ {\cal H}'={\cal G}'\,|\,P'_i\Rightarrow Q'_i\,|\,{\cal I}'  $, and let the last rules be as follows:
		\begin{prooftree}
			\AXC{$ \D $}
			\noLine
			\UIC{$ M\Rightarrow N,A\,||\,  {\cal H}\,|\,P\Rightarrow Q,p $}
			\RL{\sf R$\Box $}
			\UIC{$ M,P\Rightarrow Q,N,\Box A,p\,||\, {\cal H}$}
			\AXC{$ \D' $}
			\noLine
			\UIC{$ M',P'_i\Rightarrow Q'_i,N'\,||\,{\cal G}'\,|\,p,P'\Rightarrow Q'\,|\,{\cal I}'  $}
			\RL{\sf Exch}
			\UIC{$p,M',P'\Rightarrow Q',N'\,||\,{\cal G}'\,|\,P'_i\Rightarrow Q'_i\,|\,{\cal I}'  $}
			\RL{{\sf Cut}.}
			\BIC{$M,P,M',P' \Rightarrow  Q,N,\Box A,Q',N'\,||\, {\cal H}\,|\,{\cal G}'\,|\,P'_i\Rightarrow Q'_i\,|\,{\cal I}' $}
		\end{prooftree}
		 This cut is transformed into
		\begin{prooftree}
			\AXC{$ \D $}
			\noLine
			\UIC{$M\Rightarrow N,A\,||\,  {\cal H}\,|\, P\Rightarrow Q,p $}
			\RL{\sf R$\Box $}
			\UIC{$ M\Rightarrow N,\Box A\,||\,  {\cal H}\,|\,P\Rightarrow Q,p$} 
			\AXC{$ \D' $}
			\noLine
			\UIC{$M',P'_i\Rightarrow Q'_i,N'\,||\,{\cal G}'\,|\,p,P'\Rightarrow Q'\,|\,{\cal I}'  $}
			\RL{$ {\sf Cut}^{\sf c} $.}  
			\BIC{$M,P,M',P' \Rightarrow  Q,N,\Box A,Q',N'\,||\, {\cal H}\,|\,{\cal G}'\,|\,P'_i\Rightarrow Q'_i\,|\,{\cal I}' $}
		\end{prooftree}
\textbf{Case 2.  The left premise is derived by {\sf L$ \Diamond $}.} Similar to the case 1.\\
\textbf{Case 3.  The left premise is derived by {\sf Exch}.} Let the hypersequent $ {\cal H}$ be as ${\cal G}\,|\, P_i\Rightarrow Q_i\,|\, {\cal I}  $.   According to the last rule applied in the right premise, we have the following subcases:\\
\textbf{Subcase 3.1.} The right premise is derived by {\sf R$ \Box $} or {\sf L$ \Diamond $}. Similar to the case 1.2.\\
\textbf{Subcase 3.2.} The right premise is derived by {\sf Exch}. Let the hypersequent $ {\cal H}'$ be as ${\cal G}'\,|\, P'_i\Rightarrow Q'_i\,|\, {\cal I}'  $ and let the last rules be as
	\begin{prooftree}
		\AXC{$ \D $}
		\noLine
		\UIC{$M,P_i\Rightarrow  Q_i,N\,||\,{\cal G}\,|\, P\Rightarrow Q,p\,|\, {\cal I} $}
		\RL{\sf Exch}
		\UIC{$M,P\Rightarrow Q,N,p\,||\,{\cal G}\,|\, P_i\Rightarrow Q_i\,|\, {\cal I}  $}
		\AXC{$ \D' $}
		\noLine
		\UIC{$M',P'_i\Rightarrow Q'_i,N'\,||\,{\cal G}'\,|\, p,P'\Rightarrow Q'\,|\, {\cal I}'   $}
		\RL{\sf Exch}
		\UIC{$p,M',P'\Rightarrow Q',N'\,||\,{\cal G}'\,|\, P'_i\Rightarrow Q'_i\,|\, {\cal I}'   $}
		\RL{{\sf Cut},}
		\BIC{$M,P,M',P'\Rightarrow Q,N,Q',N'\,||\,{\cal G}\,|\, P_i\Rightarrow Q_i\,|\, {\cal I}\,|\,{\cal G}'\,|\, P'_i\Rightarrow Q'_i\,|\, {\cal I}' $}
	\end{prooftree}
	where $ \Gamma=M,P $, $ \,\Delta=Q,N $, $\, \Gamma'=M',P' $, and $ \Delta'=Q',N'\, $. This cut is transformed into
	\begin{prooftree}
		\AXC{$ \D $}
		\noLine
		\UIC{$M,P_i\Rightarrow  Q_i,N\,||\,{\cal G}\,|\, P\Rightarrow Q,p\,|\, {\cal I} $}
		\AXC{$ \D' $}
		\noLine
		\UIC{$M',P'_i\Rightarrow Q'_i,N'\,||\,{\cal G}'\,|\, p,P'\Rightarrow Q'\,|\, {\cal I}'   $}
		\RL{$ {\sf Cut}^{\sf c} $.}
		\BIC{$M,P,M',P'\Rightarrow Q,N,Q',N'\,||\,{\cal G}\,|\, P_i\Rightarrow Q_i\,|\, {\cal I}\,|\,{\cal G}'\,|\, P'_i\Rightarrow Q'_i\,|\, {\cal I}' $}
	\end{prooftree}
\textbf{Case 4.  The right premise is derived by {\sf L$ \Diamond $}, {\sf R$ \Box $}, or {\sf Exch}.} Similar to the above cases for the left premise.\\
Now we prove the admissibility of the crown cut rule $ {\sf Cut}^{\sf c} $. If the left premise is an instance of initial sequent $ {\sf L \bot} $ or $ {\sf R \top} $, then so is the conclusion. If the left premise is an instance of initial sequent {\sf Ax}, we have two cases as follows. If $ M $ and $ N $ contain a common atomic formula, then so is the conclusion. If $ P $ and $ Q $ contain a common atomic formula, then apply the rule {\sf Exch} and then apply the rules weakening to obtain deduction of  the conclusion.  If the right premise is an initial sequent or both of the premises are initial sequent, the conclusion is obtained by the same argument.\\
 The last rule applied in the premises of the crown cute rule can only be modal rules since all formulae in this rule are modal or atomic. Similar to the proof of the cut rule, we only consider the rules {\sf L$ \Diamond $}, {\sf R$ \Box $}, and {\sf Exch}.\\
	\textbf{Case 1.  The left premise is derived by L$ \Diamond $.}
	 Let $ M=\Diamond B,M_1 $ and the derivation  be as follows
	\begin{prooftree}
		\AXC{$ \D $}
		\noLine
		\UIC{$  B,M_1\Rightarrow N\,||\, {\cal H}\,|\, P_i\Rightarrow Q_i,p\,|\, {\cal G}\,|\,P\Rightarrow Q $}
		\RL{L$ \Diamond $}
		\UIC{$ \Diamond B,M_1,P\Rightarrow Q,N\,||\, {\cal H}\,|\, P_i\Rightarrow Q_i,p\,|\, {\cal G} $}
		\AXC{$ \D' $}
		\noLine
		\UIC{$M',P'\Rightarrow Q',N'\,||\, {\cal H}'\,|\,p, P'_i\Rightarrow Q'_i\,|\,{\cal G}' $}
		\RL{$ {\sf Cut}^{\sf c} $,}
		\BIC{$\Diamond B,M_1,P_i,M',P'_i\Rightarrow Q_i,N,Q'_i,N'\,||\, {\cal H}\,|\, P\Rightarrow Q\,|\, {\cal G}\,|\, {\cal H}'\,|\, P'\Rightarrow Q'\,|\,{\cal G}' $}
	\end{prooftree}
 For this case, first we transform $ B $ into $\bigvee_{j=1}^{m}(\bigwedge P''_j\vee\bigwedge \neg Q''_j\vee\bigwedge M''_j\vee\bigwedge \neg N''_j)$, an equivalent formula in DQNF for $ B $. Then by Corollary \ref{left to right1} we get $ \D_1 $ from $ \D $ as:
	\begin{prooftree}
		\AXC{$ \D_1 $}
		\noLine
		\UIC{$  P''_j,M''_j,M_1\Rightarrow N,Q''_j,N''_j\,||\, {\cal H}\,|\, P_i\Rightarrow Q_i,p\,|\, {\cal G}\,|\,P\Rightarrow Q $}
		\end{prooftree}
for every phrases in DQNF. Thus since the rules in  Corollary \ref{left to right1}  are height-preserving admissible by induction hypothesis we have
	\begin{prooftree}
		\AXC{$ \D_1 $}
		\noLine
		\UIC{$  P''_j,M''_j,M_1\Rightarrow N,Q''_j,N''_j\,||\, {\cal H}\,|\, P_i\Rightarrow Q_i,p\,|\, {\cal G} \,|\,P\Rightarrow Q$}
		\AXC{$ \D' $}
		\noLine
		\UIC{$ M',P'\Rightarrow Q',N'\,||\, {\cal H}'\,|\,p, P'_i\Rightarrow Q'_i\,|\,{\cal G}' $}
		\RL{$ {\sf Cut}^{\sf c} $.}
		\BIC{$ M''_j, M_1,P_i,M',P'_i\Rightarrow N,Q_i,N''_j,Q'_i,N'\,||\,{\cal H}\,|\,P''_j\Rightarrow Q''_j \,|\,{\cal G} \,|\,P\Rightarrow Q \,|\, {\cal H}'\,|\, P'\Rightarrow Q'\,|\,{\cal G}'$}
	\end{prooftree}  
	Therefore,  by applying Lemma \ref{NFcut2}, the conclusion is obtained.\\
	\textbf{Case 2.  The left premise is derived by {\sf R$ \Box $}.} Similar to  Case 1.\\
	\textbf{Case 3.  The left premise is derived by {\sf Exch}.} We have two subcases according to the principal formula; the cut formula $ p $ is principal formula or not.
If the cut formula is not principal, then the derivation is transformed into a derivation with lower cut-height as follows:\\
 Let the hypersequent $ {\cal G} $ be as $ P''\Rightarrow Q''\,|\,{\cal I} $  and the last rule {\sf Exch} be as follows:
		\begin{prooftree}
		\AXC{$\D $}
		\noLine
		\UIC{$ M,P''\Rightarrow Q'',N\,||\, {\cal H}\,|\, P_i\Rightarrow Q_i,p\,|\,P\Rightarrow Q\,|\,{\cal I} $}
		\RL{\sf Exch}
		\UIC{$ M,P\Rightarrow Q,N\,||\, {\cal H}\,|\, P_i\Rightarrow Q_i,p\,|\,P''\Rightarrow Q''\,|\,{\cal I} $}
		\AXC{$\D' $}
		\noLine
		\UIC{$ M',P'\Rightarrow Q',N'\,||\, {\cal H}'\,|\,p, P'_i\Rightarrow Q'_i\,|\,{\cal G}' $}
		\RL{ $ {\sf Cut}^{\sf c} $.}
		\BIC{$ M,P_i,M',P'_i\Rightarrow Q_i,N,Q'_i,N'\,||\, {\cal H}\,|\, P\Rightarrow Q\,|\,P''\Rightarrow Q''\,|\,{\cal I}\,|\, {\cal H}'\,|\, P'\Rightarrow Q'\,|\,{\cal G}' $}
	\end{prooftree}
This cut is transformed into
\begin{prooftree}
	\AXC{$\D $}
	\noLine
	\UIC{$ M,P''\Rightarrow Q'',N\,||\, {\cal H}\,|\, P_i\Rightarrow Q_i,p\,|\,P\Rightarrow Q\,|\,{\cal I} $}
	\AXC{$\D' $}
	\noLine
	\UIC{$ M',P'\Rightarrow Q',N'\,||\, {\cal H}'\,|\,p, P'_i\Rightarrow Q'_i\,|\,{\cal G}' $}
	\RL{ $ {\sf Cut}^{\sf c} $}
	\BIC{$ M,P_i,M',P'_i\Rightarrow Q_i,N,Q'_i,N'\,||\, {\cal H}\,|\, P''\Rightarrow Q''\,|\,P\Rightarrow Q\,|\,{\cal I}\,|\, {\cal H}'\,|\, P'\Rightarrow Q'\,|\,{\cal G}' $}
\end{prooftree}
Therefore, let the left premise is derive by {\sf Exch} where the cut formula $ p $ is principal. We have three subcases according  to the last rule applied in the right premise.\\
	\textbf{Subcase 3.1.} The right premise is derived by {\sf Exch}. 
	\begin{prooftree}
		\AXC{$\D $}
		\noLine
		\UIC{$ M,P_i\Rightarrow Q_i,p,N\,||\, {\cal H}\,|\, P\Rightarrow Q\,|\,{\cal G} $}
		\RL{\sf Exch}
		\UIC{$ M,P\Rightarrow Q,N\,||\, {\cal H}\,|\, P_i\Rightarrow Q_i,p\,|\,{\cal G} $}
		\AXC{$\D'$}
		\noLine
		\UIC{$ M',p,P'_i\Rightarrow Q'_i,N'\,||\, {\cal H}'\,|\, P'\Rightarrow Q'\,|\,{\cal G}' $}
		\RL{\sf Exch}
		\UIC{$ M',P'\Rightarrow Q',N'\,||\, {\cal H}'\,|\,p, P'_i\Rightarrow Q'_i\,|\,{\cal G}' $}
		\RL{ $  {\sf Cut}^{\sf c} $.}
		\BIC{$ M,P_i,M',P'_i\Rightarrow Q_i,N,Q'_i,N'\,||\, {\cal H}\,|\, P\Rightarrow Q\,|\,{\cal G}\,|\, {\cal H}'\,|\, P'\Rightarrow Q'\,|\,{\cal G}'  $}
	\end{prooftree}
	This cut  is transformed into the first cut as follows:
	\begin{prooftree}
		\AXC{$\D $}
		\noLine
		\UIC{$ M,P_i\Rightarrow Q_i,p,N\,||\, {\cal H}\,|\, P\Rightarrow Q\,|\,{\cal G} $}
		\AXC{$\D'$}
		\noLine
		\UIC{$ M',p,P'_i\Rightarrow Q'_i,N'\,||\, {\cal H}'\,|\, P'\Rightarrow Q'\,|\,{\cal G}' $}
		\RL{\sf Cut}
		\BIC{$ M,P_i,M',P'_i\Rightarrow Q_i,N,Q'_i,N'\,||\, {\cal H}\,|\, P\Rightarrow Q\,|\,{\cal G}\,|\, {\cal H}'\,|\, P'\Rightarrow Q'\,|\,{\cal G}'  $}
	\end{prooftree}
	\textbf{Subcase 3.2.} The right premise is derived by {\sf R$ \Box $}. Let $ N'=N'',\Box A $ and $ \Box A $ be the principal formula:
	\begin{prooftree}
		\AXC{$\D $}
		\noLine
		\UIC{$ M,P_i\Rightarrow Q_i,p,N\,||\, {\cal H}\,|\, P\Rightarrow Q\,|\,{\cal G} $}
		\RL{\sf Exch}
		\UIC{$ M,P\Rightarrow Q,N\,||\, {\cal H}\,|\, P_i\Rightarrow Q_i,p\,|\,{\cal G} $}
		\AXC{$\D'$}
		\noLine
		\UIC{$ M'\Rightarrow N'', A\,||\, {\cal H}'\,|\,p, P'_i\Rightarrow Q'_i\,|\,{\cal G}'\,|\,P'\Rightarrow Q' $}
		\RL{\sf R$ \Box $}
		\UIC{$ M',P'\Rightarrow Q',N'',\Box A\,||\, {\cal H}'\,|\,p, P'_i\Rightarrow Q'_i\,|\,{\cal G}' $}
		\RL{ $  {\sf Cut}^{\sf c} $.}
		\BIC{$ M,P_i,M',P'_i\Rightarrow Q_i,N,Q'_i,N'',\Box A\,||\, {\cal H}\,|\, P\Rightarrow Q\,|\,{\cal G}\,|\, {\cal H}'\,|\, P'\Rightarrow Q'\,|\,{\cal G}' $}	
	\end{prooftree}
For this cut, let
$ \bigwedge_{j=1}^{k}(\bigvee\neg P''_j\vee \bigvee Q''_j\vee \bigvee\neg M''_j\vee \bigvee N''_j) $ 
be an equivalent formula in CQNF of $ A $. Then, by Corollary \ref{left to right1}, which is height-preserving admissible, we get the following derivation from $ \D' $ 
\begin{prooftree}
	\AXC{$ \D'_1 $}
	\noLine
	\UIC{$ M''_j,P''_j,M'\Rightarrow N'',Q''_j, N''_j\,||\,{\cal H}'\,|\,p, P'_i\Rightarrow Q'_i\,|\,{\cal G}'\,|\,P'\Rightarrow Q' ,  $}
\end{prooftree}
for every clause $ (\bigvee\neg P''_j\vee \bigvee Q''_j\vee \bigvee\neg M''_j\vee \bigvee N''_j) $.
Then by applying the  rule $  {\sf Cut}^{\sf c} $, we get the following derivation for every clauses in CQNF of $ A $
	\begin{prooftree}
	\AXC{$ M,P\Rightarrow Q,N\,||\, {\cal H}\,|\, P_i\Rightarrow Q_i,p\,|\,{\cal G}$}
	\AXC{$ \D'_1 $}
	\RL{  $  {\sf Cut}^{\sf c} $}
	\BIC{$ M,P_i,M''_j,P'_i, M'\Rightarrow Q_i,N,N'',Q'_i,N''_j\,||\, {\cal H}\,|\, P\Rightarrow Q\,|\,{\cal G}\,|\, {\cal H}'\,|\, P''_j\Rightarrow Q''_j\,|\,{\cal G}'\,|\, P'\Rightarrow Q'$}	
\end{prooftree}
	Then,  by applying Lemma \ref{NFcut2}, the conclusion is obtained.\\
\textbf{Subcase 3.3.} The right premise is derived by {\sf L$ \Diamond $}. Similar to  Subcase 3.2.
	\end{proof}
Now, we prove the admissibility of the cut rule for arbitrary formula.
\begin{The}\label{cut}
	The rule of cut
\begin{prooftree}
	\AXC{$ \Gamma\Rightarrow \Delta,D\,||\, {\cal H} $}
	\AXC{$ D,\Gamma'\Rightarrow \Delta'\,||\, {\cal H'} $}
	\RL{{\sf Cut},}
	\BIC{$ \Gamma,\Gamma'\Rightarrow \Delta,\Delta'\,||\, {\cal H}\,|\, {\cal H'} $}
\end{prooftree}
where  $ D $ is an arbitrary formula,	is admissible in $ {\cal R}_{\sf S5}$.
\end{The}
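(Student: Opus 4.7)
The plan is to run a primary induction on the complexity of the cut formula $D$, with Lemma \ref{atomic cut} furnishing the atomic base case (and the constants $\top, \bot$ being immediate). For a fixed compound $D$ I will, in the modal sub-cases, also need a secondary induction on the cut-height, i.e.\ the sum of the heights of the two premise derivations. Throughout, I will freely invoke the admissibility of weakening, the contraction rules, $\sf Merge$, ${\sf Merge}^{\sf c}$, and $\sf EC$ established earlier to absorb duplicated contexts and crown components and to move formulas between root and crown as needed.

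For a compound propositional $D$ I will proceed in the standard way: apply the height-preserving invertibility of the introduction rules for $D$ (Lemma \ref{invertibility of propositional rules }) to each premise so as to extract derivations of the immediate subformulas of $D$ in suitable contexts, then apply the primary induction hypothesis to cut on those strictly smaller subformulas, and finally discharge the duplicated $\Gamma, \Delta, \Gamma', \Delta', {\cal H}, {\cal H}'$ by the admissible left, right, and external contraction rules.

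For a compound modal $D$ I sketch the case $D = \Box A$; the case $D = \Diamond A$ is entirely dual, with Lemma \ref{*}(i) in place of Lemma \ref{*}(ii) and the roles of $L\Diamond / R\Diamond$ swapped in for those of $R\Box / L\Box$. When $\Box A$ is not principal in the last rule of one of the premises, I will permute the cut upward through that rule; this strictly reduces the cut-height, so the secondary induction hypothesis applies, and the treatment parallels that in the proof of Lemma \ref{atomic cut}, with the observation that the modal rules $R\Box$, $L\Diamond$, and $\sf Exch$ all accommodate a modal side formula in their schemata, so a modal cut formula permutes through them after at most a rearrangement of crown and root components via $\sf Exch$ and $\sf Merge$. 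The principal-on-both-sides case is the heart of the argument: suppose the left premise comes from $R\Box$ applied to $\vdash M \Rightarrow N, A \,||\, {\cal H}\,|\, P \Rightarrow Q$ (so $\Gamma = M, P$ and $\Delta = Q, N$) and the right premise comes from $L\Box$ applied to $\vdash A, \Box A, \Gamma' \Rightarrow \Delta' \,||\, {\cal H}'$. First I will use the secondary IH to cut on $\Box A$ between the original left premise and the shorter $L\Box$-premise, obtaining $\vdash M, P, A, \Gamma' \Rightarrow Q, N, \Delta' \,||\, {\cal H}\,|\, {\cal H}'$. Then I will use the primary IH (since $|A| < |\Box A|$) to cut on $A$ between the $R\Box$-premise $\vdash M \Rightarrow N, A \,||\, {\cal H}\,|\, P \Rightarrow Q$ and that sequent, producing $\vdash M, M, P, \Gamma' \Rightarrow N, Q, N, \Delta' \,||\, {\cal H}\,|\, P \Rightarrow Q \,|\, {\cal H}\,|\, {\cal H}'$. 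Folding the dangling crown component $P \Rightarrow Q$ into the root by $\sf Merge$ and applying $\sf LC$, $\sf RC$, and $\sf EC$ to the duplicated $M, P, N, Q$, and ${\cal H}$ yields the required $\vdash M, P, \Gamma' \Rightarrow Q, N, \Delta' \,||\, {\cal H}\,|\, {\cal H}'$.

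I expect the main obstacle to be the bookkeeping in the non-principal modal cases: when $\Box A$ occurs as a side formula in an application of $R\Box$, $L\Diamond$, or $\sf Exch$ in one of the premises, the permuted cut produces a sequent whose root and crown shape must be restored before the rule can be reapplied. The key point will be that $\sf Merge$, ${\sf Merge}^{\sf c}$, $\sf Exch$, and the admissibility of external contraction give exactly the slack needed to perform this shape restoration, and that the discipline used in the proof of Lemma \ref{atomic cut} carries over, with the one difference that the crown cut $\sf Cut^c$ reductions employed there are not available for a compound modal $D$ (which does not fit into a crown slot) and must be replaced by direct permutation plus the principal reduction outlined above.
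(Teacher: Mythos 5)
Your proposal is correct and follows essentially the same strategy as the paper's proof: a primary induction on the structure of $D$ with a secondary induction on cut-height, the atomic base case delegated to Lemma \ref{atomic cut}, propositional cut formulas reduced via invertibility plus the admissible contraction rules, non-principal modal occurrences handled by permuting the cut upward with {\sf Merge}, ${\sf Merge}^{\sf c}$ and {\sf Exch} repairing the root/crown shape, and the principal--principal $\Box A$ case resolved by one cut on $\Box A$ at lower cut-height followed by one cut on $A$ and contractions. The only (inessential) divergence is in that last step, where the paper first strips the box from the left conclusion via Lemma \ref{*} while you cut directly against the ${\sf R}\Box$-premise and fold the leftover crown component $P\Rightarrow Q$ back into the root with {\sf Merge}; both variants yield the same duplicated sequent to be contracted.
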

\begin{proof}
	  The proof proceeds by induction on the structure of the cut formula
	 $ D $ with subinduction on the cut-height i.e., the sum of the heights of the
	 derivations of the premises.	The admissibility of  the rule for atomic cut formula follows from Lemma \ref{atomic cut}, therefore we consider cases where $ D $ is not atomic formula.
	If the cut formula is of the form
	$ \neg A $,
	$ A\wedge B $,
	$ A\vee B $,  or
	$ A\rightarrow B $,
 then using  invertibility of the propositional rules, \Cref{invertibility of propositional rules }, the cut rule can be transformed into cut rules where cut formula is reduced, i.e, cut formula is 
	$ A $ or
	$ B $. Thus it remains to consider cases, where cut formula is of the form $ \Diamond A $ or $ \Box A $. We only consider the case where the cut formula is of the form $ \Box A $; the other case is proved similarly. We distinguish the following cases:\\
	\textbf{1. Cut formula $ \Box A $ is principal in  the left premise only.}
	we consider the last rule applied to the right premise of cut. If the last rule applied is a propositional rule, then the derivation is transformed into a derivation of lower cut-height as usual. Thus we will consider modal rules.\\
	\textbf{Subcase 1.1.}  The right premise is derived by {\sf R$ \Box $}. Let $ \Gamma'=M',P' $
	 and 
	$ \Delta'=Q',N',\Box B $, and let cut rule be as follows 
	\begin{prooftree}
		\AXC{$ \D $}
		\noLine
		\UIC{$ M\Rightarrow N, A\,||\, {\cal H}\,|\,P\Rightarrow Q $}
		\RL{\sf R$\Box $}
		\UIC{$ M,P\Rightarrow Q,N, \Box A\,||\, {\cal H} $}
		\AXC{$ \D' $}
		\noLine
		\UIC{$ \Box A,M'\Rightarrow N', B \,||\, {\cal H}'\,|\,P'\Rightarrow Q'$}
		\RL{\sf R$\Box $}
		\UIC{$  \Box A,M',P'\Rightarrow Q',N', \Box B \,||\, {\cal H}'$}
		\RL{{\sf Cut},}
		\BIC{$ M,P,M',P'\Rightarrow  Q,N,Q',N',\Box B ||\, {\cal H}\,|\, {\cal H}' $}
	\end{prooftree}
	where 	$ \Gamma=M,P $ and $ \Delta=Q,N $. This cut is transformed into
	\begin{prooftree}
			\AXC{$ \D $}
		\noLine
		\UIC{$  M\Rightarrow N, A\,||\, {\cal H}\,|\,P\Rightarrow Q $}
		\RL{\sf R$\Box $}
		\UIC{$  M\Rightarrow N, \Box A\,||\, {\cal H}\,|\,P\Rightarrow Q $}
		\AXC{$ \D' $}
		\noLine
		\UIC{$ \Box A,M'\Rightarrow N', B \,||\, {\cal H}'\,|\,P'\Rightarrow Q' $}
		\RL{\sf Cut}
		\BIC{$ M,M'\Rightarrow N, N', B\,||\,{\cal H}\,|\,P\Rightarrow Q \,|\,{\cal H}'\,|\, P'\Rightarrow Q' $}
		\RL{$ {\sf  Merge}^{\sf c} $}
		\UIC{$ M,M'\Rightarrow N, N', B\,||\, {\cal H}\,|\, {\cal H}'\,|\,P,P'\Rightarrow Q,Q' $}
		\RL{{\sf R$\Box $}.}
		\UIC{$M,M',P,P'\Rightarrow Q,Q', N, N', \Box B\,||\, {\cal H}\,|\, {\cal H}' $}
	\end{prooftree}
	\textbf{Case 1.2.}  The right premise is derived by {\sf L$ \Diamond $}. This case is treated similar to the above case.\\
	\textbf{Case 1.2.}  The right premise is derived by {\sf R$ \Diamond $}. Let
	$ \Delta'=\Delta'',\Diamond B $ and  cut rule be as follows 
	\begin{prooftree}
		\AXC{$ \D $}
		\noLine
		\UIC{$ M,P\Rightarrow Q,N, \Box A\,||\, {\cal H} $}
		\AXC{$ \D' $}
		\noLine
		\UIC{$ \Box A,\Gamma'\Rightarrow \Delta'',\Diamond B,B\,||\, {\cal H}' $}
		\RL{\sf R$\Diamond $}
		\UIC{$  \Box A,\Gamma'\Rightarrow \Delta'', \Diamond B\,||\, {\cal H}' $}
		\RL{{\sf Cut},}
		\BIC{$ M,P,\Gamma'\Rightarrow  Q,N,\Delta'',\Diamond B\,\,||\, {\cal H}\,|\, {\cal H}' $}
	\end{prooftree}
	where 	$ \Gamma=M,P $ and $ \Delta=Q,N $. This cut is transformed into
	\begin{prooftree}
		\AXC{$ \D $}
		\noLine
		\UIC{$ M,P\Rightarrow Q,N, \Box A\,||\, {\cal H} $}
		\AXC{$ \D' $}
		\noLine
		\UIC{$ \Box A,\Gamma'\Rightarrow \Delta'',\Diamond B,B\,||\, {\cal H}' $}
		\RL{\sf Cut}
		\BIC{$ M,P,\Gamma'\Rightarrow Q,N, \Delta'', \Diamond B,B\,||\, {\cal H}\,|\, {\cal H}' $}
		\RL{{\sf R$\Diamond $}.}
		\UIC{$ M,P,\Gamma'\Rightarrow  Q,N,\Delta'',\Diamond B\,\,||\, {\cal H}\,|\, {\cal H}' $}
	\end{prooftree}
	\textbf{Case 1.3.}  The right premise is derived by {\sf L$ \Box $}. This case is treated similar to the above case.\\
	\textbf{Case 1.4.}  The right premise is derived by {\sf Exch}. Let $ \Gamma'=M',P' $ and
	$ \Delta'=Q',N' $, and let the hypersequent $ {\cal H}' $ be as $ {\cal G}'\,|\, P'_i\Rightarrow Q'_i\,|\,{\cal I}' $:
	\begin{prooftree}
		\AXC{$ \D $}
		\noLine
		\UIC{$ M\Rightarrow N, A\,||\,{\cal H}\,|\, P\Rightarrow Q $}
		\RL{\sf R$ \Box $}
		\UIC{$ M,P\Rightarrow Q,N, \Box A\,||\,{\cal H} $}
		\AXC{$ \D' $}
		\noLine
		\UIC{$ \Box A,M',P'_i\Rightarrow Q'_i,N'\,||\,{\cal G}'\,|\, P'\Rightarrow Q'\,|\,{\cal I}' $}
		\RL{\sf Exch}
		\UIC{$  \Box A,M',P'\Rightarrow Q',N'\,||\,{\cal G}'\,|\, P'_i\Rightarrow Q'_i\,|\,{\cal I}' $}
		\RL{{\sf Cut},}
		\BIC{$ M,P,M',P'\Rightarrow  Q,N,Q',N'\,||\,{\cal H}\,|\,{\cal G}'\,|\, P'_i\Rightarrow Q'_i\,|\,{\cal I}'  $}
	\end{prooftree}
	where 	$ \Gamma=M,P $ and $ \Delta=Q,N $. This cut is transformed into
		\begin{prooftree}
		\AXC{$ \D $}
		\noLine
		\UIC{$ M\Rightarrow N, A\,||\,{\cal H}\,|\, P\Rightarrow Q  $}
		\RL{\sf R$ \Box $}
		\UIC{$ M\Rightarrow N, \Box A\,||\,{\cal H}\,|\, P\Rightarrow Q$}
		\AXC{$ \D' $}
		\noLine
		\UIC{$ \Box A,M',P'_i\Rightarrow Q'_i,N'\,||\,{\cal G}'\,|\, P'\Rightarrow Q'\,|\,{\cal I}'  $}
		\RL{\sf Cut}
		\BIC{$ M,M',P'_i\Rightarrow N,Q'_i, N'\,||\,{\cal H}\,|\, P\Rightarrow Q\,|\,{\cal G}'\,|\, P'\Rightarrow Q'\,|\,{\cal I}' $}
		\RL{$ {\sf  Merge}^{\sf c} $}
		\UIC{$M,M',P'_i\Rightarrow N,Q'_i, N'\,||\,{\cal H}\,|\,{\cal G}'\,|\, P,P'\Rightarrow Q,Q'\,|\,{\cal I}'  $}
		\RL{\sf Exch}
		\UIC{$ M,M',P,P'\Rightarrow  N,N',Q,Q'\,||\,{\cal H}\,|\,{\cal G}'\,|\, P'_i\Rightarrow Q'_i\,|\,{\cal I}'  $}
	\end{prooftree}
\textbf{ 2. Cut formula $ \Box A $ is principal in both premises.}
  Let $ \Gamma=M,P $
	and 
	$ \Delta=Q,N $, and let cut rule be as follows 
	\begin{prooftree}
		\AXC{$ \D $}
		\noLine
		\UIC{$ M\Rightarrow N, A\,||\, {\cal H}\,|\,P\Rightarrow Q $}
		\RL{\sf R$\Box $}
		\UIC{$ M,P\Rightarrow Q,N, \Box A\,||\, {\cal H} $}
		\AXC{$ \D' $}
		\noLine
		\UIC{$ A,\Box A,\Gamma'\Rightarrow \Delta'\,||\, {\cal H}' $}
		\RL{\sf L$\Box $}
		\UIC{$  \Box A,\Gamma'\Rightarrow \Delta' \,||\, {\cal H}'$}
		\RL{{\sf Cut},}
		\BIC{$ M,P,\Gamma'\Rightarrow  Q,N, \Delta'\,||\, {\cal H}\,|\,{\cal H}'  $}
	\end{prooftree}
	 This cut is transformed into
	 \begin{prooftree}
	 	\AXC{$ M,P\Rightarrow Q,N, \Box A\,||\, {\cal H} $}
	 	\RL{ \ref{*}}
	 	\UIC{$   M,P\Rightarrow Q,N,  A\,||\, {\cal H} $}
	 	\AXC{$ M,P\Rightarrow Q,N, \Box A\,||\, {\cal H}$}
	 	\AXC{$ \D' $}
	 	\noLine
	 	\UIC{$ A,\Box A,\Gamma'\Rightarrow \Delta'\,||\, {\cal H}' $}
	 	\RL{\sf  Cut }
	 	\BIC{$ M,P,A,\Gamma'\Rightarrow Q,N,\Delta'\,||\, {\cal H}\,|\,{\cal H}' $}
	 	\RL{\sf Cut}
	 	\BIC{$ M,P,M,P,\Gamma'\Rightarrow Q,N,Q,N,\Delta'\,||\, {\cal H}\,|\, {\cal H}\,|\,{\cal H}' $}
	 	\RL{{\sf LC}, {\sf RC}, {\sf EC}}
	 	\UIC{$ M,P,\Gamma'\Rightarrow  Q,N, \Delta'\,||\, {\cal H}\,|\,{\cal H}'  $}
	 \end{prooftree}
	 \textbf{ 3. Cut formula $ \Box A $ is not principal in the left premise.}\\
	  According to the last rule in the derivation of the left premise, we have  subcases. For propositional rules the cut rule can be  transformed into a derivation with cut(s) of lower cut-height.\\
	 \textbf{Case 3.1.}  The left premise is derived by {\sf L$ \Diamond $}. Suppose  that  $ \Gamma=\Diamond C,M,P$ and $ \Delta=N,Q $ where $  \Diamond C $ is the principal formula. Again, we have the following subcases according to the last rule in  derivation of the right premise, and we only consider the modal rules.\\
	  \textbf{Subcase 3.1.1}  The right premise is derived by {\sf L$ \Diamond $}. Let
	  $ \Gamma'=\Diamond B,M',P'\,$ and  $\Delta'=Q',N' $.
	  \begin{prooftree}
	  	\AXC{$ \D $}
	  	\noLine
	  	\UIC{$  C,M\Rightarrow N, \Diamond A\,||\, {\cal H} \,|\,P\Rightarrow Q $}
	  	\RL{\sf L$\Diamond $}
	  	\UIC{$ \Diamond C,M,P\Rightarrow Q,N, \Diamond A\,||\, {\cal H} $}
	  	\AXC{$ \D' $}
	  	\noLine
	  	\UIC{$  \Diamond A, B,M'\Rightarrow N' \,||\, {\cal H}'\,|\, P'\Rightarrow Q' $}
	  	\RL{\sf L$\Diamond $}
	  	\UIC{$ \Diamond A,\Diamond B,M',P'\Rightarrow Q',N'\,||\, {\cal H}' $}
	  	\RL{{\sf Cut},}
	  	\BIC{$ \Diamond C,M,P, \Diamond B,M',P'\Rightarrow Q,N,Q',N'\,||\, {\cal H}\,|\, {\cal H}' $}
	  \end{prooftree}
	   This cut is transformed into
	  \begin{prooftree}
	  	\AXC{$ \D $}
	  	\noLine
	  	\UIC{$  C,M\Rightarrow N, \Diamond A\,||\, {\cal H} \,|\,P\Rightarrow Q $}
	  	\AXC{$ \D' $}
	  	\noLine
	  	\UIC{$ \Diamond A, B,M'\Rightarrow N' \,||\,{\cal H}'\,|\, P'\Rightarrow Q' $}
	  	\RL{\sf L$\Diamond $}
	  	\UIC{$ \Diamond A, \Diamond B,M'\Rightarrow N' \,||\,{\cal H}'\,|\, P'\Rightarrow Q'$}
	  	\RL{ \sf Cut }
	  	\BIC{$  C,M, \Diamond B,M'\Rightarrow N,N'\,||\, {\cal H} \,|\,P\Rightarrow Q\,|\,{\cal H}'\,|\, P'\Rightarrow Q' $}
	  	\RL{$ {\sf Merge}^{\sf c} $}
	  	\UIC{$   C,M, \Diamond B,M'\Rightarrow N,N'\,||\,  {\cal H}\,|\, {\cal H}'\,|\,P,P'\Rightarrow Q,Q' $}
	  	\RL{\sf L$\Diamond $.}
	  	\UIC{$ \Diamond C,M,P, \Diamond B,M',P'\Rightarrow Q,N,Q',N' \,||\, {\cal H}\,|\, {\cal H}'$}
	  \end{prooftree}
  \textbf{Subcase 3.1.2}  The right premise is derived by {\sf R$ \Box $} or {\sf Exch}. Similar to Subcase 3.1.1.\\
  \textbf{Subcase 3.1.3}  The right premise is derived by {\sf L$ \Box $}.
   Let
  $ \Gamma'=\Box B,M',P'\,$ and  $\Delta'=Q',N' $.
  \begin{prooftree}
  	\AXC{$ \D $}
  	\noLine
  	\UIC{$  C,M\Rightarrow N, \Diamond A\,||\, {\cal H}\,|\,P\Rightarrow Q $}
  	\RL{\sf L$\Diamond $}
  	\UIC{$ \Diamond C,M,P\Rightarrow Q,N, \Diamond A\,||\, {\cal H} $}
  	\AXC{$ \D' $}
  	\noLine
  	\UIC{$  \Diamond A, B, \Box B,M',P'\Rightarrow Q',N'\,||\, {\cal H}'  $}
  	\RL{\sf L$\Box $}
  	\UIC{$ \Diamond A,\Box B,M',P'\Rightarrow Q',N'\,||\, {\cal H}' $}
  	\RL{{\sf Cut},}
  	\BIC{$ \Diamond C,M,P, \Diamond B,M',P'\Rightarrow Q,N,Q',N'\,||\, {\cal H}\,|\,{\cal H}' $}
  \end{prooftree}
	 This cut is transformed into
	 \begin{prooftree}
	 	\AXC{$ \Diamond C,M,P\Rightarrow Q,N, \Diamond A\,||\, {\cal H} $}
	 	\AXC{$ \D' $}
	 	\noLine
	 	\UIC{$ \Diamond A, B, \Box B,M',P'\Rightarrow Q',N'\,||\, {\cal H}'  $}
	 	\RL{{\sf Cut},}
	 	\BIC{$ \Diamond C,M,P, B,\Box B,M',P'\Rightarrow Q,N,Q',N'\,||\, {\cal H}\,|\,{\cal H}' $}
	 	\RL{\sf L$ \Box $}
	 	\UIC{$ \Diamond C,M,P, \Box B,M',P'\Rightarrow Q,N,Q',N' \,||\, {\cal H}\,|\,{\cal H}'$}
	 \end{prooftree}
	   \textbf{Subcase 3.1.3}  The right premise is derived by {\sf R$ \Diamond $}. Similar to Subcase 3.1.2.\\
	 \textbf{Case 3.2.}  The left premise is derived by {\sf R$ \Box $}. Similar to Case 3.1.\\
	 \textbf{Case 3.2.}  The left premise is derived by {\sf Exch}. Suppose  the hypersequent   ${\cal H}$ be as $ {\cal G}\,|\, P_i\Rightarrow Q_i\,|\, {\cal I} $.   We have the following  subcases according to the last rule in  derivation of the right premise, and we only consider modal rules here. \\
	 \textbf{Subcase 3.2.1}  The right premise is derived by {\sf Exch}. Let the hypersequent $ {\cal H}' $ be as  ${\cal G}'\,|\, P'_i\Rightarrow Q'_i\,|\,{\cal I}' $,  and let the last rules be as
	 \begin{prooftree}
	 	\AXC{$ \D $}
	 	\noLine
	 	\UIC{$M,P_i\Rightarrow Q_i,N, \Diamond A\,||\, {\cal G}\,|\, P\Rightarrow Q\,|\, {\cal I} $}
	 	\RL{\sf Exch}
	 	\UIC{$M,P\Rightarrow Q,N,\Diamond A\,||\, {\cal G}\,|\, P_i\Rightarrow Q_i\,|\, {\cal I} $}
	 	\AXC{$ \D' $}
	 	\noLine
	 	\UIC{$\Diamond A,M',P'_i\Rightarrow Q'_i,N'\,||\, {\cal G}'\,|\, P'\Rightarrow Q'\,|\,{\cal I}'  $}
	 	\RL{\sf Exch}
	 	\UIC{$\Diamond A,M',P'\Rightarrow Q',N'\,||\, {\cal G}'\,|\, P'_i\Rightarrow Q'_i\,|\,{\cal I}' $}
	 	\RL{{\sf Cut},}
	 	\BIC{$M,P,M',P' \Rightarrow Q,N,Q',N'\,||\, {\cal G}\,|\, P_i\Rightarrow Q_i\,|\, {\cal I}\,|\, {\cal G}'\,|\, P'_i\Rightarrow Q'_i\,|\,{\cal I}' $}
	 \end{prooftree}
	 where $ \Gamma=M,P $, $ \,\Delta=Q,N $, $\, \Gamma'=M',P' $, and $ \Delta'=Q',N'$. This cut is transformed into
	 \begin{prooftree}
	 	\AXC{$ \D $}
	 	\noLine
	 	\UIC{$M,P_i\Rightarrow Q_i,N, \Diamond A\,||\, {\cal G}\,|\, P\Rightarrow Q\,|\, {\cal I} $}
	 	\AXC{$ \D' $}
	 	\noLine
	 	\UIC{$ \Diamond A,M',P'_i\Rightarrow Q'_i,N'\,||\, {\cal G}'\,|\, P'\Rightarrow Q'\,|\,{\cal I}'  $}
	 	\RL{\sf Exch}
	 	\UIC{$ \Diamond A,M'\Rightarrow N'\,||\,P'_i\Rightarrow Q'_i\,|\, {\cal G}'\,|\, P'\Rightarrow Q'\,|\,{\cal I}'   $}
	 	\RL{{\sf Cut},}
	 	\BIC{$M,P_i,M'\Rightarrow Q_i,N,N',||\, {\cal G}\,|\, P\Rightarrow Q\,|\, {\cal I}\,|\,P'_i\Rightarrow Q'_i\,|\, {\cal G}'\,|\, P'\Rightarrow Q'\,|\,{\cal I}' $}
	 	\RL{${\sf Merge}^{\sf c}$}
	 	\UIC{$ M,P_i,M'\Rightarrow Q_i,N,N',||\, {\cal G}\,|\, P,P'\Rightarrow Q,Q'\,|\, {\cal I}\,|\,P'_i\Rightarrow Q'_i\,|\, {\cal G}'\,|\,{\cal I}' $}
	 	\RL{{\sf Exch}.}
	 	\UIC{$ M,P,M',P' \Rightarrow Q,N,Q',N'\,||\, {\cal G}\,|\, P_i\Rightarrow Q_i\,|\, {\cal I}\,|\, {\cal G}'\,|\, P'_i\Rightarrow Q'_i\,|\,{\cal I}' $}
	 \end{prooftree}
	 \textbf{Subcase 3.2.2}  The right premise is derived by {\sf R$ \Box $}. Similar to Subcase 3.2.1.\\
	  \textbf{Subcase 3.2.3}  The right premise is derived by {\sf R$ \Diamond $} or {\sf L$ \Box $}. Similar to Subcase 3.1.3.\\
\end{proof}
\begin{The}\label{complete}
	The following are  equivalent.
	\begin{itemize}
		\item[{\rm (1)}]
		The sequent
		$ \Gamma\Rightarrow A $ is S5-valid.
		\item[{\rm (2)}]
	$ \Gamma\vdash_{\text{S5}} A $.
		\item[{\rm (3)}]
		The sequent
	$ \Gamma\Rightarrow  A $
	is provable in $ \cal{R}_{\sf S5}  $.
\end{itemize}
\end{The}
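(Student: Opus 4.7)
The plan is to close the cycle $(3) \Rightarrow (1) \Rightarrow (2) \Rightarrow (3)$, which yields the equivalence of all three statements. The arrow $(3) \Rightarrow (1)$ is already provided by the Soundness Theorem \ref{Soundness}. The arrow $(1) \Rightarrow (2)$ is the classical strong completeness of the Hilbert axiomatisation of \textsf{S5} with respect to Kripke models based on equivalence relations, which can be invoked from \cite{Blackburn, Chellas}. Hence the substantive work lies in the arrow $(2) \Rightarrow (3)$.

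I prove $(2) \Rightarrow (3)$ by induction on the length of a Hilbert derivation $A_1,\ldots,A_n = A$ of $A$ from $\Gamma$ in \textsf{S5}, showing at each step that $\Gamma \Rightarrow A_i$ is derivable in $\mathcal{R}_{\sf S5}$. If $A_i$ is an assumption from $\Gamma$, I first derive the identity sequent $A_i \Rightarrow A_i$ by a routine induction on the complexity of $A_i$ using the propositional and modal rules, and then apply left weakening (Lemma \ref{W2}). If $A_i$ is an \textsf{S5} axiom, I exhibit a derivation of $\Rightarrow A_i$ in $\mathcal{R}_{\sf S5}$ and then weaken to $\Gamma \Rightarrow A_i$. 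For an application of Modus Ponens to $A_j$ and $A_j \to A_i$, the inductive hypothesis supplies $\Gamma \Rightarrow A_j$ and $\Gamma \Rightarrow A_j \to A_i$; invertibility of \textsf{R$\rightarrow$} (Lemma \ref{invertibility of propositional rules }) converts the latter into $\Gamma, A_j \Rightarrow A_i$, and one application of \textsf{Cut} (Theorem \ref{cut}) followed by left contraction yields $\Gamma \Rightarrow A_i$. For Necessitation applied to a theorem $A_j$, the inductive hypothesis gives $\Rightarrow A_j$; after adding an empty crown component via \textsf{EW} and applying \textsf{R$\Box$} with $M=N=P=Q=\emptyset$, one obtains $\Rightarrow \Box A_j$, which left weakening extends to $\Gamma \Rightarrow \Box A_j$.

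The main obstacle is exhibiting the derivations of the modal axioms, most notably (5), $\Diamond A \to \Box \Diamond A$, for arbitrary $A$. After successive applications of \textsf{R$\rightarrow$}, \textsf{R$\Box$}, and \textsf{L$\Diamond$}, the task reduces to deriving $A \Rightarrow \Diamond A \,||\, \Rightarrow \,|\, \Rightarrow$; this is obtained from the identity $A \Rightarrow A$ by right weakening to $A \Rightarrow A, \Diamond A$, one application of \textsf{R$\Diamond$} (which absorbs the extra $A$) to get $A \Rightarrow \Diamond A$, and two applications of \textsf{EW} to install the empty crown components. The derivations of (K), (T), and (Dual) are obtained by simpler combinations of \textsf{L$\Box$}, \textsf{R$\Box$}, \textsf{L$\Diamond$}, \textsf{R$\Diamond$} together with identity and weakening, while propositional tautologies are handled by the standard $\mathsf{G3}$-style analysis. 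Once these axiom derivations and the identity lemma are in place, the induction closes and the cycle of implications is complete.
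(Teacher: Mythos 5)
Your proposal is correct and follows essentially the same route as the paper: soundness gives $(3)\Rightarrow(1)$, Hilbert-style completeness gives $(1)\Rightarrow(2)$, and $(2)\Rightarrow(3)$ is proved by induction on the Hilbert derivation with the same four cases (assumptions via an identity lemma and weakening, axioms derived directly with axiom (5) as the key example, Modus Ponens via cut and contraction, Necessitation via {\sf R$\Box$}). The only cosmetic difference is in the Modus Ponens case, where you use invertibility of {\sf R$\rightarrow$} plus one cut while the paper uses two cuts against an explicitly derived sequent $A_j\rightarrow A_i, A_j\Rightarrow A_i$; both are equally valid, and your explicit handling of the empty crown components via {\sf EW} in the derivation of axiom (5) is in fact slightly more careful than the paper's.
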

\begin{proof}
	 (1) implies  (2)  by completeness of S5.
	 (3) implies (1)  by soundness of $ \cal{R}_{\sf S5}  $. We show that (2) implies (3).
	Suppose 
	$ A_1,\ldots,A_n $
	is  an S5-proof  of $ A $ from $ \Gamma $. This means that $ A_n $ is $ A $ and that each $ A_i $ is in $ \Gamma $, is an axiom, or is inferred by  modus ponens or necessitation. It is straightforward to
	prove, by induction on $ i $, that $\vdash \Gamma\Rightarrow A_i $ for each $ A_i $.
	
	Case 1. $ A_i\in \Gamma $: Let $ \Gamma= A_i,\Gamma' $. It can be easily proved that $ A_i,\Gamma'\Rightarrow A_i $ is derivable in $ \cal{R}_{\sf S5}  $ by induction on the complexity of $ A_i $ and using weakening rules.
	
	Case 2. $ A_i $ is an axiom of S5: All axioms of S5 are easily proved in $ \cal{R}_{\sf S5}  $. As a typical example, in the following we prove the axiom 5:
	\begin{prooftree}
		\AXC{$  A\Rightarrow A,\Diamond A $}
		\RL{$ \sf R\Diamond $}
		\UIC{$  A\Rightarrow \Diamond A $}
		\RL{$ \sf L\Diamond $}
		\UIC{$ \Diamond A\Rightarrow \Diamond A $}
		\RL{\sf R$\Box $}
		\UIC{$ \Diamond A\Rightarrow \Box\Diamond A $}
		\RL{{\sf R$\rightarrow $}.}
		\UIC{$ \Rightarrow \Diamond A\rightarrow \Box\Diamond A $}
		\end{prooftree}
	
	Case 3. $ A_i $ is inferred by  modus ponens: Suppose  $ A_i $ is inferred from $ A_j $ and $  A_j\rightarrow A_i $, $ j<i $, by use of the cut rule we prove $ \Gamma\Rightarrow A_i $:
	\begin{prooftree}
		\AXC{}
		\RL{IH}
		\UIC{$ \Gamma\Rightarrow A_j $}
		               \AXC{}
		               \RL{ IH }
		               \UIC{$ \Gamma\Rightarrow A_j\rightarrow A_i $}
		                            \AXC{$ A_j\Rightarrow A_i, A_j $}
		                             \AXC{$  A_i\Rightarrow A_i $}
		                             \RL{\sf L$\rightarrow $}
		                             \BIC{$ A_j\rightarrow A_i, A_j\Rightarrow A_i $}
		               \RL{\sf Cut}
		               \BIC{$A_j,\Gamma\Rightarrow A_i  $}
        \RL{\sf Cut}
        \BIC{$ \Gamma,\Gamma\Rightarrow A_i $}
        \RL{{\sf LC}. }
        \UIC{$ \Gamma\Rightarrow A_i $}
		\end{prooftree}
	Case 4. $ A_i $ is inferred by  necessitation:
Suppose $ A_i=\Box A_j $ is inferred from $ A_j $ by  necessitation. In this case, $ \vdash_{S5}A_j $ (since the rule necessitation  can be applied only to premises which are derivable in the axiomatic system) and so we have:
\begin{prooftree}
\AXC{}
\RL{IH}
\UIC{$ \Rightarrow A_j $}
\RL{\sf R$ \Box $}
\UIC{$ \Rightarrow\Box A_j $}
\RL{{\sf LW}.}
\UIC{$ \Gamma\Rightarrow A_i $}	
\end{prooftree}
	\end{proof}
\begin{Cor}
$ \cal{R}_{\sf S5}  $ is sound and complete with respect to the S5-{\rm Kripke} frames.
\end{Cor}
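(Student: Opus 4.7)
}

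The plan is to establish the cycle (1) $\Rightarrow$ (2) $\Rightarrow$ (3) $\Rightarrow$ (1). The implication (1) $\Rightarrow$ (2) is immediate from the known Kripke-completeness of the Hilbert-style system for {\sf S5}, and (3) $\Rightarrow$ (1) is exactly the soundness Theorem \ref{Soundness} applied to the empty crown. So all the real work is in (2) $\Rightarrow$ (3): given a Hilbert derivation $A_1,\dots,A_n$ of $A$ from $\Gamma$, I will show by induction on $i$ that $\vdash_{\mathcal{R}_{\sf S5}} \Gamma\Rightarrow A_i$.

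The induction splits into four cases following the definition of a Hilbert proof. If $A_i\in\Gamma$, I would first derive $A_i\Rightarrow A_i$ by an easy induction on the complexity of $A_i$ using the rules of $\mathcal{R}_{\sf S5}$ (the propositional and modal rules reduce it to instances of {\sf Ax}, {\sf L$\bot$}, {\sf R$\top$}), and then apply the admissibility of left weakening (Lemma \ref{W2}) to obtain $\Gamma\Rightarrow A_i$. If $A_i$ is an axiom, I need to exhibit concrete derivations of all the propositional tautologies and of the axioms (Dual), (K), (T), (5); each is a short derivation in $\mathcal{R}_{\sf S5}$ (the derivation of (5) uses {\sf R$\Diamond$}, {\sf L$\Diamond$}, {\sf R$\Box$}, {\sf R$\rightarrow$} as in the displayed example of the excerpt). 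If $A_i$ follows by modus ponens from $A_j$ and $A_j\to A_i$, I combine the two inductive derivations of $\Gamma\Rightarrow A_j$ and $\Gamma\Rightarrow A_j\to A_i$ with the derivable sequent $A_j\to A_i, A_j\Rightarrow A_i$ (via {\sf L$\rightarrow$}) using two applications of the admissible cut rule (Theorem \ref{cut}), then discharge the duplicated context with left contraction. If $A_i=\Box A_j$ comes by necessitation, then $A_j$ is a theorem, so the induction hypothesis gives $\vdash\,\Rightarrow A_j$; I apply {\sf R$\Box$} (with empty $M,P,N,Q$ and empty crown on the premise side giving an empty-crown conclusion) to obtain $\Rightarrow \Box A_j$, and then left-weaken with $\Gamma$ to reach $\Gamma\Rightarrow A_i$.

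The main obstacle is the modus ponens case, since it relies essentially on cut-admissibility for arbitrary cut formulas; but this is exactly what Theorem \ref{cut} provides, so once that machinery is in place the case reduces to a routine two-cut argument. The necessitation case also deserves a word of care: {\sf R$\Box$} is only applicable when the root-antecedent and root-succedent consist solely of modal/atomic material, which is why the induction hypothesis must be invoked on the empty-antecedent sequent $\Rightarrow A_j$ rather than on $\Gamma\Rightarrow A_j$; this is legitimate because necessitation in the Hilbert calculus is only applied to outright theorems, as noted in Section \ref{sec 2}. After the four cases are complete, instantiating $i=n$ yields $\vdash_{\mathcal{R}_{\sf S5}}\Gamma\Rightarrow A$, which closes the cycle.
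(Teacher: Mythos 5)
Your proposal is correct and follows essentially the same route as the paper: the cycle $(1)\Rightarrow(2)\Rightarrow(3)\Rightarrow(1)$ with the only substantive work in $(2)\Rightarrow(3)$, handled by induction on the Hilbert derivation with the same four cases (assumption via identity-sequent plus weakening, axioms via direct derivations such as the one for (5), modus ponens via two cuts and contraction, necessitation via {\sf R$\Box$} on the theorem $\Rightarrow A_j$ followed by weakening). Your remark that necessitation must be invoked on $\Rightarrow A_j$ rather than $\Gamma\Rightarrow A_j$ matches the paper's treatment exactly.
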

\section{Concluding Remarks}\label{conclution}
We have presented the system $ \cal{R}_{\sf S5}  $, using  rooted hypersequent, a sequent-style calculus for S5 which  enjoys the subformula property. We have proved the soundness and completeness theorems, and the  admissibility of the  weakening, contraction and cut rules in the system. In the first draft of this paper, we wrote the rules  {\sf L$ \Diamond $} and {\sf R$ \Box $} as follows:
\begin{center}
	\AXC{$ M,A\Rightarrow\Box\bigvee(\neg P,Q),N $}
	\RL{\sf L$ \Diamond $}
	\UIC{$ \Diamond A,M,P\Rightarrow Q,N $}
	\DP
	$ \qquad $
	\AXC{$ M\Rightarrow\Box\bigvee(\neg P,Q),N,A $}
	\RL{{\sf R$ \Box $}.}
	\UIC{$ M, P\Rightarrow Q,N,\Box A $}
	\DP
	\end{center}
In these rules we can use   $ \Diamond\bigwedge(P,\neg Q) $ in the antecedent  instead of  $ \Box\bigvee(\neg P,Q) $ in the succedent of the premises, since  these formulae are equivalent and  have the same role in derivations as storages;  they  equivalently can be exchanged, or be taken both of them. Taking each of them, one can prove the admissibility of the others.
By applying these rules in a backward proof search,  although  the formulae in the premises are constructed from atomic formulae in the conclusions,  the subformula property does not hold. Then, we decided to use semicolon in the middle part of the sequents.  Using extra connective semicolon ($ ; $) we introduced  a new sequent-style  calculus  for S5,  and  called it $ \text{G3{\scriptsize S5}}^; $.   Sequents in $ \text{G3{\scriptsize S5}}^; $ are  of the form
$ \Gamma;P_1;\ldots;P_n\Rightarrow Q_n;\ldots;Q_1;\Delta $, where  $ \Gamma $ and $ \Delta $ are multisets of arbitrary formulae, and $ P_i $ and $ Q_i $ are multisets of atomic formulae which serve as storages. For convenience, we used $ H $ and $ G $ to denote the sequence of multisets $P_1;\ldots;P_n$ and $ Q_n;\ldots;Q_1$, respectively. Thus, we used $ \Gamma;H\Rightarrow G;\Delta  $ to denote the sequents.
The main idea for constructing this sequent is to take an ordinary sequent $ \Gamma\Rightarrow\Delta $ as a
root  and add  two sequences of multisets of atomic formulae to it.
The system $ \text{G3{\scriptsize S5}}^; $
is obtained by extending G3c for propositional logic with the following rules:
\begin{align*}
&\AXC{$ A,\Box A,\Gamma;H\Rightarrow G; \Delta $}
\RL{L$\Box $}
\UIC{$ \Box A,\Gamma;H\Rightarrow G;\Delta $}
\DP
&&
\AXC{$ M;P;H\Rightarrow G;Q;N,A$}
\RL{\sf R$\Box $}
\UIC{$M,P;H\Rightarrow G;Q,N,\Box A $}
\DP
\\[0.15cm]
&\AXC{$  A,M;P;H\Rightarrow G;Q;N $}
\RL{L$\Diamond $}
\UIC{$\Diamond A,M,P;H\Rightarrow G;Q,N $}
\DP
&&
\AXC{$ \Gamma;H\Rightarrow G; \Delta,\Diamond A,A $}
\RL{\sf R$\Diamond $}
\UIC{$ \Gamma;H\Rightarrow G;\Delta,\Diamond A $}
\DP
\end{align*}
\begin{prooftree}
	\AXC{$M,P_i;H_1;P;H_2\Rightarrow G_2;Q;G_1;Q_i,N $}
	\RL{\sf Exch}
	\UIC{$M,P;H_1;P_i;H_2\Rightarrow G_2;Q_i;G_1;Q,N  $}
\end{prooftree}
where $M$ and $N $ are multisets of modal formulae. In backward proof  search, by applying the rules L$ \Diamond $,   {\sf R$ \Box $}, and {\sf Exch} atomic formulae in $ P $ and $ Q $ in the conclusions move to the middle (storage) parts (between two semicolons) in the premises.
These formulae, which are stored in the middle parts of sequents  until applications of the rule {\sf Exch} in a derivation, are called  related formulae, and $ (P,Q) $  is called a related pair of multisets. By applying the rule {\sf Exch},  related formulae in $ P_i $ and $ Q_i $ come out from the middle part. In other words,  multisets  $ P_i $ and $ Q_i $ exist together from storages by applications of the rule {\sf Exch}, if they have entered them together previously by applications of the rules {\sf L$ \Diamond $},  {\sf R$ \Box $}, or {\sf Exch}.

In the following, we prove  a simple sequent to show details of this system. Below, $ H=G=\emptyset $ and so for convenience we omit their semicolons, also $  P=\emptyset $ and $ Q=p $ in the rules {\sf R$ \Box $} and {\sf Exch}:
\begin{prooftree}
	\AXC{$ p,\Box p\Rightarrow p$}
	\RL{\sf L$ \Box $}
	\UIC{$ \Box p\Rightarrow p $}
	\RL{\sf Exch}
	\UIC{$ \Box p;\Rightarrow p; $}
	\RL{\sf R$ \neg $}
	\UIC{$ ;\Rightarrow p; \neg \Box p $}
	\RL{\sf R$ \Box $}
	\UIC{$ \Rightarrow p,\Box\neg \Box p $}
\end{prooftree}
This system   has the subformula property,   and we showed that all rules of this calculus are invertible and that the  rules of weakening, contraction, and  cut 
are admissible. Soundness and completeness are established as well.

The intended interpretation of the sequent $ \Gamma;P_1;\ldots;P_n\Rightarrow Q_n;\ldots;Q_1;\Delta $  is defined as follows
\[\bigwedge\Gamma\rightarrow \bigvee\Delta\vee\bigvee_{i=1}^n\Box(\bigwedge P_i\rightarrow \bigvee Q_i).\]
This interpretation is similar to  the standard formula interpretation of both nested sequents (\cite{brunnler2009deep}) and  grafted  hypersequents (\cite{Kuznets Lellmann}).  A nested sequent is a structure 
$ \Gamma\Rightarrow\Delta, [\mathcal{N}_1],\ldots,[\mathcal{N}_n] $, where $ \Gamma\Rightarrow\Delta $ is
an ordinary sequent and each $ \mathcal{N}_i $ is again a nested sequent.
A grafted  hypersequent is a structure
$ \Gamma\Rightarrow \Delta\parallel \Gamma_1\Rightarrow \Delta_1\,|\cdots|\Gamma_n\Rightarrow \Delta_n $, where $ \Gamma\Rightarrow \Delta $ is called root or  trunk, and  each $ \Gamma_i\Rightarrow \Delta_i $ is called a component.
The standard formula interpretation of nested sequent    is given recursively as
\[ (\Gamma\Rightarrow\Delta, [\mathcal{N}_1],\ldots,[\mathcal{N}_n])^{tr}:=\bigwedge \Gamma\rightarrow\bigvee\Delta \vee \Box(\mathcal{N}_1)^{tr}\vee\ldots\vee  \Box(\mathcal{N}_n)^{tr},\]
where $(\mathcal{N}_i)^{tr}$ is the standard formula interpretation of the nested sequent $ \mathcal{N}_i $. A grafted  hypersequents is essentially the same as the nested
sequent $ \Gamma\Rightarrow\Delta,[\Gamma_1\Rightarrow \Delta_1],\ldots, [\Gamma_n\Rightarrow \Delta_n] $, and the interpretation of grafted hypersequents is adapted
from the nested sequent setting as well.
Therefore, the sequent $ \Gamma;P_1;\ldots;P_n\Rightarrow Q_n;\ldots;Q_1;\Delta $,  can also be transformed into  rooted hypersequent 
$$ \Gamma\Rightarrow \Delta\parallel P_1\Rightarrow Q_1\,|\cdots|P_n\Rightarrow Q_n, $$  where all formulae in each component $ P_i\Rightarrow Q_i $ are atomic formulae.  
Therefore, we decided to rewrite the sequent  in the framework of grafted hypersequent and we call it rooted hypersequent.\\

\textbf{Acknowledgement}. The authors would like to thank Meghdad Ghari for useful
suggestions.

\newlength{\bibitemsep}\setlength{\bibitemsep}{.2\baselineskip plus .05\baselineskip minus .05\baselineskip}
\newlength{\bibparskip}\setlength{\bibparskip}{0pt}
\let\oldthebibliography\thebibliography
\renewcommand\thebibliography[1]{%
	\oldthebibliography{#1}%
	\setlength{\parskip}{\bibitemsep}%
	\setlength{\itemsep}{\bibparskip}%
}
\addcontentsline{toc}{section}{References}

 \end{document}